\DeclareMathOperator{\Id}{Id}
\DeclareMathOperator{\tr}{tr}
\DeclareMathOperator{\Vol}{Vol}
\DeclareMathOperator{\End}{End}
\DeclareMathOperator{\Imaginary}{Im}
\DeclareMathOperator{\Real}{Re}
\DeclareMathOperator{\tf}{tf}
\newcommand{\db}{\partial_b}
\newcommand{\dbbar}{\overline{\partial}_b}
\newcommand{\ow}{\overline{w}}
\newcommand{\hg}{\widehat{g}}
\newcommand{\hI}{\widehat{I}}
\newcommand{\hQ}{\widehat{Q}}
\newcommand{\hX}{\widehat{X}}
\newcommand{\hmI}{\widehat{\mathcal{I}}}
\newcommand{\htheta}{\widehat{\theta}}
\newcommand{\hzeta}{\widehat{\zeta}}
\newcommand{\homega}{\widehat{\omega}}
\newcommand{\lp}{\langle}
\newcommand{\rp}{\rangle}
\newcommand{\lv}{\lvert}
\newcommand{\rv}{\rvert}
\newcommand{\contr}{\lrcorner}
\newcommand{\mC}{\mathcal{C}}
\newcommand{\mE}{\mathcal{E}}
\newcommand{\mI}{\mathcal{I}}
\newcommand{\mO}{\mathcal{O}}
\newcommand{\mP}{\mathcal{P}}
\newcommand{\mR}{\mathcal{R}}
\newcommand{\mS}{\mathcal{S}}
\newcommand{\mT}{\mathcal{T}}
\newcommand{\mU}{\mathcal{U}}
\newcommand{\mV}{\mathcal{V}}
\newcommand{\bC}{\mathbb{C}}
\newcommand{\bN}{\mathbb{N}}
\newcommand{\bR}{\mathbb{R}}
\newcommand{\suchthat}{\mathrel{}\middle|\mathrel{}}
\def\sideremark#1{\ifvmode\leavevmode\fi\vadjust{\vbox to0pt{\vss
 \hbox to 0pt{\hskip\hsize\hskip1em
 \vbox{\hsize3cm\tiny\raggedright\pretolerance10000
 \noindent #1\hfill}\hss}\vbox to8pt{\vfil}\vss}}}
\newcommand{\comment}[1]{}
\newcommand{\chern}{\mathrm{ch}}
\numberwithin{equation}{section}
\newtheorem{theorem}{Theorem}[section]
\newtheorem{lemma}[theorem]{Lemma}
\newtheorem{proposition}[theorem]{Proposition}
\newtheorem{corollary}[theorem]{Corollary}
\newtheorem{conjecture}[theorem]{Conjecture}
\theoremstyle{definition}
\newtheorem{remark}[theorem]{Remark}
\newtheorem{definition}[theorem]{Definition}
\theoremstyle{theorem}
\newtheorem{step}{Step}
\begin{document}

\title[$\mI^\prime$-curvatures and the Hirachi conjecture]{$\mI^\prime$-curvatures in higher dimensions and the Hirachi conjecture}
\author{Jeffrey S. Case}
\thanks{JSC was supported by a grant from the Simons Foundation (Grant No.\ 524601)}
\address{109 McAllister Building \\ Penn State University \\ University Park, PA 16802}
\email{jscase@psu.edu}
\author{Yuya Takeuchi}
\thanks{YT was supported by JSPS Research Fellowship for Young Scientists
	and JSPS KAKENHI Grant Number JP19J00063}
\address{Department of Mathematics \\ Graduate School of Science \\ Osaka University
	\\ 1-1 Machikaneyama-cho, Toyonaka, Osaka 560-0043, Japan}
\email{yu-takeuchi@cr.math.sci.osaka-u.ac.jp}
\keywords{secondary CR invariant; Hirachi conjecture}
\subjclass[2010]{Primary 32V05}
\begin{abstract}
 We construct higher-dimensional analogues of the $\mI^\prime$-curvature of Case and Gover in all CR dimensions $n\geq2$.  Our $\mI^\prime$-curvatures all transform by a first-order linear differential operator under a change of contact form and their total integrals are independent of the choice of pseudo-Einstein contact form on a closed CR manifold.  We exhibit examples where these total integrals depend on the choice of general contact form, and thereby produce counterexamples to the Hirachi conjecture in all CR dimensions $n\geq2$.
\end{abstract}
\maketitle

\section{Introduction}
\label{sec:intro}

The $Q^\prime$-curvature of a pseudo-Einstein manifold~\cite{CaseYang2012,Hirachi2013} has many formal similarities to the (critical) $Q$-curvature in conformal geometry~\cite{Branson1995}.  These similarities begin with how the $Q^\prime$- and $Q$-curvatures transform under a conformal rescaling of the contact form and the metric, respectively.  If $\theta$ and $\htheta=e^\Upsilon\theta$ are pseudo-Einstein contact forms on a $(2n+1)$-dimensional CR manifold, then
\begin{equation}
 \label{eqn:Qprime-transformation}
 e^{(n+1)\Upsilon}\hQ^\prime = Q^\prime + P^\prime(\Upsilon) + \frac{1}{2}P(\Upsilon^2) \equiv Q^\prime + P^\prime(\Upsilon) \mod \mP^\perp ,
\end{equation}
where $P^\prime$ is the $P^\prime$-operator~\cite{CaseYang2012,Hirachi2013}, $P$ is the (critical) CR GJMS operator~\cite{GoverGraham2005}, and $\mP^\perp$ is the $L^2$-orthogonal complement to the space $\mP$ of CR pluriharmonic functions.  Similarly, if $g$ and $\hg=e^{2\Upsilon}g$ are Riemannian metrics on a $2n$-dimensional manifold, then
\begin{equation}
 \label{eqn:Q-transformation}
 e^{2n\Upsilon}\hQ = Q + P(\Upsilon),
\end{equation}
where $P$ is the (critical) GJMS operator~\cite{GJMS1992}.  Importantly, the operators appearing in \cref{eqn:Qprime-transformation,eqn:Q-transformation} are formally self-adjoint and annihilate constants.  In particular, the total $Q^\prime$-curvature is a \emph{global secondary CR invariant} --- that is, it is independent of the choice of pseudo-Einstein contact form, if one exists, on a closed CR manifold --- and the total $Q$-curvature is a global conformal invariant.  Moreover, explicit formulae for the $Q^\prime$-curvature of the round CR sphere~\cite{CaseGover2013,Takeuchi2018} and the $Q$-curvature of the round sphere~\cite{Branson1995} imply that these global invariants are nontrivial.

For $(2n+1)$-dimensional CR manifolds which can be realized as the boundary of a bounded strictly pseudoconvex domain in $\bC^{n+1}$, the total $Q^\prime$-curvature is a global biholomorphic invariant of the domain.
The Burns--Epstein invariant~\cite{BurnsEpstein1988,BurnsEpstein1990c} is also a global biholomorphic invariant of such a domain.  Marugame~\cite{Marugame2016} gave an alternative realization of the Burns--Epstein invariant as the boundary term in a Gauss--Bonnet--Chern formula for the domain.  When $n=1$, the total $Q^\prime$-curvature agrees, up to a multiplicative constant, with the Burns--Epstein invariant~\cite{CaseYang2012,Hirachi2013}.  When $n=2$, the total $Q^\prime$-curvature and the Burns--Epstein invariant are linearly independent, but an explicit relationship in terms of global secondary CR invariants is known~\cite{CaseGover2013,HirachiMarugameMatsumoto2015}.

The analogue of the above paragraph in conformal geometry is the relationship between the total $Q$-curvature and the Euler characteristic.  It is well-known that the Gauss--Bonnet formula identifies the Euler characteristic of a closed surface with the total $Q$-curvature, up to multiplicative constant.  The Gauss--Bonnet--Chern formula in dimension four gives an explicit identity relating the Euler characteristic, the total $Q$-curvature, and the $L^2$-norm of the Weyl tensor~\cite{BransonOrsted1991b}.  Similarly, the Gauss--Bonnet--Chern formula in dimension six gives an explicit identity relating the Euler characteristic, the total $Q$-curvature, and total integrals of local conformal invariants~\cite{Graham2000}.  More generally, Alexakis~\cite{Alexakis2012} proved that if $I$ is any natural Riemannian scalar invariant whose total integral is a conformal invariant on any closed $2n$-dimensional manifold, then there is a constant $c\in\bR$ such that
\[ I = cQ + \text{(local conformal invariant)} + \text{(divergence)} . \]
Together with the close relationship between the $Q^\prime$- and $Q$-curvatures, Alexakis' result motivated Hirachi~\cite{Hirachi2013} to pose the following conjecture:

\begin{conjecture}[Hirachi conjecture]
 \label{conj:strong_hirachi}
 Let $I$ be a natural pseudohermitian scalar invariant whose total integral is a secondary CR invariant.  Then there is a constant $c\in\bR$ such that
 \begin{equation}
  \label{eqn:strong_hirachi}
  I = cQ^\prime + \textup{(local CR invariant)} + \textup{(divergence)} .
 \end{equation}
\end{conjecture}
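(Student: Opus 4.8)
The statement to be resolved is a conjecture, and the thrust of this paper is that it is \emph{false}; so the task is really to describe how one would build a counterexample. The plan is to construct, in every CR dimension $n\geq2$, a natural pseudohermitian scalar invariant $\mI^\prime$ whose total integral is a secondary CR invariant---so that $\mI^\prime$ satisfies the hypothesis of \cref{conj:strong_hirachi}---but whose total integral nonetheless varies under a \emph{general} change of contact form. This is enough to contradict \cref{eqn:strong_hirachi}: the total integral of a local CR invariant is a primary CR invariant and hence unchanged by \emph{any} rescaling, and the total integral of a divergence vanishes for \emph{any} contact form, so the only term on the right-hand side of \cref{eqn:strong_hirachi} that can contribute to the dependence of $\int_M I$ on a general contact form is $c\int_M Q^\prime$. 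A counterexample is therefore an $\mI^\prime$ whose total integral depends on the contact form in a way not proportional to that of the total $Q^\prime$-curvature---most cleanly, one for which the relevant variation of $\int_M \mI^\prime$ is nonzero while that of $\int_M Q^\prime$ is controlled or vanishes.

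First I would construct $\mI^\prime$ and compute its transformation law. Following the tractor/ambient construction of Case and Gover in low dimension, I would realize $\mI^\prime$ as a natural pseudohermitian scalar of weight $-(n+1)$ assembled from tractor curvature (equivalently, from a constant term in a volume or scattering expansion), and then establish that under $\htheta = e^\Upsilon\theta$ it transforms by a \emph{first-order} linear operator,
\begin{equation*}
 e^{(n+1)\Upsilon}\,\widehat{\mI^\prime} = \mI^\prime + L(\Upsilon).
\end{equation*}
Since $\htheta\wedge(d\htheta)^n = e^{(n+1)\Upsilon}\,\theta\wedge(d\theta)^n$, integrating gives
\begin{equation*}
 \int_M \widehat{\mI^\prime}\,\htheta\wedge(d\htheta)^n - \int_M \mI^\prime\,\theta\wedge(d\theta)^n = \int_M L(\Upsilon)\,\theta\wedge(d\theta)^n,
\end{equation*}
so the whole question reduces to the functional $\Upsilon\mapsto\int_M L(\Upsilon)$.

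Next I would verify that $\mI^\prime$ meets the hypothesis of the conjecture. When $\theta$ and $\htheta$ are both pseudo-Einstein the factor $\Upsilon$ is CR pluriharmonic, so it suffices to show $\int_M L(\Upsilon) = 0$ for every $\Upsilon\in\mP$. I expect this to follow, exactly as for $Q^\prime$ in \cref{eqn:Qprime-transformation}, from formal self-adjointness of the part of $L$ that survives integration together with the vanishing of the relevant pairings against pluriharmonic functions; the obstruction to full invariance then lives in $\mP^\perp$ and is invisible to deformations through pseudo-Einstein contact forms. Carefully isolating which pieces of $L$ survive integration, and confirming that they annihilate $\mP$ after pairing, is the first genuinely technical step.

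The crux---and the step I expect to be the main obstacle---is producing a closed CR manifold and a single $\Upsilon\notin\mP$ with $\int_M L(\Upsilon)\neq0$. I would evaluate this on a model with fully computable pseudohermitian curvature, such as the round sphere $S^{2n+1}$ or a circle bundle over a K\"ahler--Einstein base, choosing $\Upsilon$ in $\mP^\perp$ and pushing the tractor computation far enough to extract the non-pluriharmonic part of $L$ explicitly. Two difficulties must be met here: one needs closed-form control of $\mI^\prime$ on the model, enough to see the term that survives integration against a non-pluriharmonic $\Upsilon$, and one must simultaneously check that the corresponding variation of the total $Q^\prime$-curvature cannot absorb the answer---so that the dependence detected in $\int_M \mI^\prime$ is genuinely incompatible with \cref{eqn:strong_hirachi} for every constant $c$. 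Once such an example is exhibited, $\int_M\mI^\prime$ is a secondary CR invariant that is not contact-form-independent in the manner forced by \cref{eqn:strong_hirachi}, and \cref{conj:strong_hirachi} fails for all $n\geq2$.
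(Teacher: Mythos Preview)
Your outline captures the right architecture---build an $\mI^\prime$ with a first-order transformation law, verify secondary invariance, then exhibit contact-form dependence---but each of the three technical steps has a gap relative to what the paper actually does.

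\textbf{Secondary invariance.} You expect $\int_M L(\Upsilon)=0$ for $\Upsilon\in\mP$ to follow from ``formal self-adjointness \dots\ together with the vanishing of the relevant pairings.'' In the paper this step is not operator-theoretic. One writes $L(\Upsilon)=2\Real X_\alpha^\Phi\Upsilon^\alpha$ for an explicit CR-invariant $(1,0)$-form $X_\alpha^\Phi$ of weight $-n$, packages it into a closed $2n$-form $\xi^\Phi$, and proves that $[\xi^\Phi]$ is a constant multiple of the characteristic class $c_\Phi(T^{1,0})\in H^{2n}(M;\bR)$. A separate result of Takeuchi gives $c_\Phi(T^{1,0})=0$ on any pseudo-Einstein manifold, and $\int_M L(\Upsilon)$ for pluriharmonic $\Upsilon$ is then recognized as the cup product $[\xi^\Phi]\cup[d_b^c\Upsilon]$ evaluated on the fundamental class. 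This cohomological input is the substance of secondary invariance and is not anticipated by your sketch.

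\textbf{Eliminating $c$.} Your plan to compare the variation of $\int\mI^\prime$ against that of $\int Q^\prime$ under a general rescaling is awkward, since $Q^\prime$ is only canonically attached to pseudo-Einstein contact forms. The paper instead evaluates the putative identity $\mI^\prime = cQ^\prime + (\text{local CR inv.}) + (\text{div.})$ on the round sphere: there the Chern tensor vanishes, so $\mI_\Phi^\prime\equiv0$ and every local CR invariant vanishes, while $\int Q^\prime\neq0$. Integrating forces $c=0$ outright. For this to work you must build $\mI^\prime$ so that it vanishes on the flat model, which is automatic once it is polynomial in the Chern tensor.

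\textbf{Producing nonzero variation.} Your suggestion to detect $\int_M L(\Upsilon)\neq0$ on the round sphere cannot succeed: since $L$ is built from Chern curvature, $L\equiv0$ there. The paper's reduction (Lemma~\ref{lem:construction-of-counterexample}) is sharper than computing $\int L(\Upsilon)$: once $c=0$, it suffices to exhibit \emph{any} pseudohermitian manifold on which the local CR invariant $\Real\nabla^\alpha X_\alpha^\Phi$ is not identically zero. The paper does this two ways: (i) a perturbation of the sphere toward a real ellipsoid, computing the $n$-th $t$-derivative of $\Real\nabla^\alpha X_\alpha^\Phi$ along the family; and (ii) circle bundles over Ricci-flat (not merely K\"ahler--Einstein) Calabi--Yau manifolds whose top Chern form is non-constant, produced via degenerating families of K3 and Kummer-threefold metrics. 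A generic K\"ahler--Einstein base will not do, since you need the Chern tensor nonzero and a specific complete contraction of it non-constant.
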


\cref{conj:strong_hirachi} is true~\cite{Hirachi2013} in CR dimension $n=1$; i.e.\ if $I$ is a natural pseudohermitian scalar invariant whose total integral is a secondary CR invariant on all closed CR three-manifolds, then $I$ is of the form of \cref{eqn:strong_hirachi}.  However, \cref{conj:strong_hirachi} is false~\cite{CaseGover2013,ReiterSon2019} in CR dimension $n=2$.  The purpose of this article is to show that it is false in all CR dimensions $n\geq2$ by producing a large collection of counterexamples.  To motivate our results, we first describe in more detail what is known when $n=2$.

Let $(M^5,T^{1,0},\theta)$ be a pseudohermitian manifold of CR dimension $n=2$.  Case and Gover~\cite{CaseGover2013} studied two invariants.  First, they proved that
\[ X_\alpha := -iS_{\alpha\bar\beta\gamma\bar\sigma}V^{\bar\beta\gamma\bar\sigma} + \frac{1}{4}\nabla_\alpha\lv S_{\gamma\bar\sigma\delta\bar\rho}\rv^2 \]
is a CR invariant $(1,0)$-form of weight $-2$, where $S_{\alpha\bar\beta\gamma\bar\sigma}$ is the Chern tensor and, in general CR dimension $n$,
\[ V_{\alpha\bar\beta\gamma} := \frac{i}{n}\nabla^{\bar\sigma}S_{\alpha\bar\beta\gamma\bar\sigma} . \]
Case and Gover further showed that if $(M^5,T^{1,0})$ admits a pseudo-Einstein contact form, then $[\xi]=4\pi^2c_2(T^{1,0})$, where
\[ \xi := 2\Real X_\alpha\theta\wedge\theta^\alpha\wedge d\theta . \]
By observing~\cite{CaseGover2013,Takeuchi2020} that $c_2(T^{1,0})=0$ in $H^4(M;\bR)$, they conclude that $\Real\nabla^\alpha X_\alpha$ is orthogonal to $\mP$.  Second, they proved that the $\mI^\prime$-curvature,
\[ \mI^\prime := -\frac{1}{8}\Delta_b\lv S_{\alpha\bar\beta\gamma\bar\sigma}\rv^2 + \lv V_{\alpha\bar\beta\gamma}\rv^2 + \frac{1}{2}P\lv S_{\alpha\bar\beta\gamma\bar\sigma}\rv^2, \]
where $P:=\frac{1}{2(n+1)}R$ is a constant multiple of the pseudohermitian scalar curvature, is such that
\[ e^{3\Upsilon}\hmI^\prime = \mI^\prime + 2\Real X_\alpha\Upsilon^\alpha \]
for any $\htheta=e^\Upsilon\theta$, where $\hmI^\prime$ is defined in terms of $\htheta$.  These facts imply that the total $\mI^\prime$-curvature is a global secondary CR invariant; in fact, the Burns--Epstein invariant is a linear combination of the total $Q^\prime$- and $\mI^\prime$-curvatures~\cite{CaseGover2013}.  By computing on nonspherical real ellipsoids, Reiter and Son~\cite{ReiterSon2019} then showed that the $\mI^\prime$-curvature is not a linear combination of a local CR invariant and a divergence, thereby disproving \cref{conj:strong_hirachi} in CR dimension two.

In this article we construct analogues of $X_\alpha$ and $\mI^\prime$ in all CR dimensions $n\geq2$.  To that end, let $\delta_{\alpha_1\dotsm\alpha_n}^{\beta_1\dotsm\beta_n}$ denote the generalized Kronecker delta and let $\Phi_{\alpha_1\dotsm\alpha_n}^{\beta_1\dotsm\beta_n}$ be an invariant polynomial of degree $n$; in particular,
\[ \Phi_{\alpha_{\sigma(1)}\dotsm\alpha_{\sigma(n)}}^{\beta_{\sigma(1)}\dotsm\beta_{\sigma(n)}} = \Phi_{\alpha_1\dotsm\alpha_n}^{\beta_1\dotsm\beta_n} \]
for all elements $\sigma\in S_n$ of the symmetric group on $n$ elements.  Define
\begin{equation}
 \label{eqn:Xn}
 X_\alpha^\Phi := i(\mS^\Phi)_\alpha{}^\beta{}_\mu{}^\nu V_\beta{}^\mu{}_\nu - \frac{1}{n^2}\nabla_\alpha c_\Phi(S),
\end{equation}
where
\begin{align}
 (\mS^\Phi)_\alpha{}^\beta{}_\mu{}^\nu & := \delta_{\alpha\alpha_2\dotsm\alpha_n}^{\beta\beta_2\dotsm\beta_n}\Phi_{\mu\mu_2\dotsm\mu_n}^{\nu\nu_2\dotsm\nu_n}S_{\beta_2}{}^{\alpha_2}{}_{\nu_2}{}^{\mu_2}\dotsm S_{\beta_n}{}^{\alpha_n}{}_{\nu_n}{}^{\mu_n} , \\
 c_\Phi(S) & := (S^\Phi)_\alpha{}^\beta{}_\mu{}^\nu S_\beta{}^\alpha{}_\nu{}^\mu .
\end{align}
Taking $\Phi_{\alpha_1\alpha_2}^{\beta_1\beta_2}=\delta_{\alpha_1}^{\beta_2}\delta_{\alpha_2}^{\beta_1}$ recovers the definitions of Case and Gover~\cite{CaseGover2013}.

Our first result is that $X_\alpha^\Phi$ is CR invariant:

\begin{theorem}
 \label{thm:X-invariant}
 Let $(M^{2n+1},T^{1,0},\theta)$ be a pseudoherimitian manifold, let $\Phi$ be an invariant polynomial of degree $n$, and let $X_\alpha^\Phi$ be given by~\cref{eqn:Xn}.  Then $X_\alpha^\Phi$ is a CR invariant $(1,0)$-form of weight $-n$; i.e.
 \[ e^{n\Upsilon}\hX_\alpha^\Phi = X_\alpha^\Phi \]
 for all $\htheta=e^\Upsilon\theta$, where $\hX_\alpha^\Phi$ is defined in terms of $\htheta$. In particular, $\Real \nabla^{\alpha} X_{\alpha}^{\Phi}$ is a local CR invariant of weight $-n-1$.
\end{theorem}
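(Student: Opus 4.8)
The plan is to prove the transformation law by showing that the \emph{anomalous} terms --- those involving derivatives of $\Upsilon$ --- cancel. Since $(\mS^\Phi)_\alpha{}^\beta{}_\mu{}^\nu$ and $c_\Phi(S)$ are built algebraically from the Chern tensor $S_{\alpha\bar\beta\gamma\bar\sigma}$ using only the parallel tensors $\delta$ and $\Phi$, and since $S$ is a CR invariant, both transform by pure rescaling; in particular $\hat c_\Phi(S)=e^{-n\Upsilon}c_\Phi(S)$. Hence the second term of $X_\alpha^\Phi$ transforms as $-\tfrac1{n^2}\hat\nabla_\alpha\hat c_\Phi(S)=e^{-n\Upsilon}\bigl(-\tfrac1{n^2}\nabla_\alpha c_\Phi(S)+\tfrac1n\Upsilon_\alpha c_\Phi(S)\bigr)$, contributing the single anomaly $\tfrac1n\Upsilon_\alpha c_\Phi(S)$. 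Everything then comes down to showing that the anomaly produced by the covariant derivative inside $V$ cancels this.

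The first step is therefore to compute the transformation law of $V_{\alpha\bar\beta\gamma}=\tfrac in\nabla^{\bar\sigma}S_{\alpha\bar\beta\gamma\bar\sigma}$. Using the change-of-connection formulas for the Tanaka--Webster connection under $\htheta=e^\Upsilon\theta$ together with the rescaling of $S$, one expands $\hat\nabla^{\bar\sigma}\hat S_{\alpha\bar\beta\gamma\bar\sigma}$ into $\nabla^{\bar\sigma}S_{\alpha\bar\beta\gamma\bar\sigma}$ plus corrections linear in $\nabla\Upsilon$ and in $S$. The key point is that every correction in which $\Upsilon$ is contracted, via the Levi form, into a differentiated index of $S$ yields a \emph{trace} of the Chern tensor and hence vanishes; after invoking the symmetries $S_{\alpha\bar\beta\gamma\bar\sigma}=S_{\gamma\bar\beta\alpha\bar\sigma}=S_{\alpha\bar\sigma\gamma\bar\beta}$ the surviving terms collapse to a single multiple of $\Upsilon^{\bar\sigma}S_{\alpha\bar\beta\gamma\bar\sigma}$, giving $\hat V_{\alpha\bar\beta\gamma}=V_{\alpha\bar\beta\gamma}+i\,\Upsilon^{\bar\sigma}S_{\alpha\bar\beta\gamma\bar\sigma}$ (the constant being pinned by the cancellation below). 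I expect this step to be the main obstacle, since it demands the precise connection-change formulas, careful bookkeeping of any torsion contributions, and full use of the trace-freeness and symmetries of $S$ to verify that nothing but the $\Upsilon^{\bar\sigma}S$ term survives.

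Substituting this into $i(\mS^\Phi)_\alpha{}^\beta{}_\mu{}^\nu V_\beta{}^\mu{}_\nu$ and raising the free index, the anomaly of the first term becomes $-\Upsilon_\rho(\mS^\Phi)_\alpha{}^\beta{}_\mu{}^\nu S_\beta{}^\mu{}_\nu{}^\rho$. Cancellation of the total anomaly thus reduces to the purely algebraic identity
\begin{equation*}
 (\mS^\Phi)_\alpha{}^\beta{}_\mu{}^\nu S_\beta{}^\mu{}_\nu{}^\rho=\frac1n\,\delta_\alpha^\rho\,c_\Phi(S).
\end{equation*}
This is where the matching of the CR dimension $n$ with the degree $n$ of $\Phi$ is essential: in complex dimension $n$ the generalized Kronecker delta on $n$ indices factors as $\delta_{\alpha\alpha_2\dotsm\alpha_n}^{\beta\beta_2\dotsm\beta_n}=\epsilon_{\alpha\alpha_2\dotsm\alpha_n}\epsilon^{\beta\beta_2\dotsm\beta_n}$. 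Using the symmetry $S_\beta{}^\mu{}_\nu{}^\rho=S_\beta{}^\rho{}_\nu{}^\mu$ I would write the left-hand side as $\epsilon_{\alpha\alpha_2\dotsm\alpha_n}\,G^{\rho\alpha_2\dotsm\alpha_n}$, where $G^{\alpha_1\dotsm\alpha_n}=\epsilon^{\beta_1\dotsm\beta_n}\Phi_{\mu_1\dotsm\mu_n}^{\nu_1\dotsm\nu_n}\prod_{i=1}^{n}S_{\beta_i}{}^{\alpha_i}{}_{\nu_i}{}^{\mu_i}$. The $S_n$-symmetry of $\Phi$ together with the antisymmetry of $\epsilon^{\beta_1\dotsm\beta_n}$ forces $G$ to be totally antisymmetric in $\alpha_1,\dotsc,\alpha_n$, so $G^{\alpha_1\dotsm\alpha_n}=g\,\epsilon^{\alpha_1\dotsm\alpha_n}$ for a scalar $g$; contracting gives $c_\Phi(S)=n!\,g$ while the left-hand side equals $(n-1)!\,g\,\delta_\alpha^\rho$, which is the identity. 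Substituting it, the first-term anomaly becomes $-\tfrac1n\Upsilon_\alpha c_\Phi(S)$, exactly canceling the $+\tfrac1n\Upsilon_\alpha c_\Phi(S)$ from the second term, and we conclude $e^{n\Upsilon}\hX_\alpha^\Phi=X_\alpha^\Phi$.

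For the final assertion, I would observe that $-n$ is precisely the critical weight at which the divergence of a $(1,0)$-form becomes CR invariant: applying the change-of-connection formula to $\hat\nabla_{\bar\beta}\hX_\alpha^\Phi$ and contracting with $\hat h^{\alpha\bar\beta}=e^{-\Upsilon}h^{\alpha\bar\beta}$, the would-be anomaly $\Upsilon^\alpha X_\alpha^\Phi$ appears with a coefficient that vanishes exactly because $X_\alpha^\Phi$ has weight $-n$. Hence $\Real\nabla^\alpha X_\alpha^\Phi$ transforms by the pure rescaling $e^{-(n+1)\Upsilon}$ and is a local CR invariant of weight $-n-1$.
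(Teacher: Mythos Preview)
Your proposal is correct and follows essentially the same route as the paper's proof: compute the anomaly of each summand of $X_\alpha^\Phi$ (using the CR invariance of $S$ and the transformation law $D_\theta V_{\alpha\bar\beta\gamma}(\Upsilon)=iS_{\alpha\bar\beta\gamma}{}^\rho\Upsilon_\rho$, which the paper records in Lemma~2.1), and then invoke a dimensional algebraic identity to cancel them. The only difference worth noting is in how that identity is established. The paper observes that the $(n{+}1)$-index generalized Kronecker delta $\delta_{\alpha\alpha_1\dotsm\alpha_n}^{\beta\beta_1\dotsm\beta_n}$ vanishes identically in CR dimension $n$, so the tensor $(\mU^\Phi)_\alpha{}^\beta$ built from it is zero; Laplace-expanding along the first row then gives $c_\Phi(S)\delta_\alpha^\beta=n(\mS^\Phi)_\alpha{}^\rho{}_\mu{}^\nu S_\rho{}^\beta{}_\nu{}^\mu$ directly. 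Your $\epsilon$-factorization argument (writing $\delta_{\alpha\alpha_2\dotsm\alpha_n}^{\beta\beta_2\dotsm\beta_n}=\epsilon_{\alpha\alpha_2\dotsm\alpha_n}\epsilon^{\beta\beta_2\dotsm\beta_n}$ and deducing that $G$ is a scalar multiple of $\epsilon$) is an equivalent way to encode the same dimensional constraint, and is equally valid.
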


This follows by a direct computation using the CR invariance of the Chern tensor and the simple transformation formula for $V_{\alpha\bar\beta\gamma}$; see~\cref{sec:invariance} for details.

Now define the $\mI_\Phi^\prime$-curvature of $(M^{2n+1},T^{1,0},\theta)$ by
\begin{multline}
 \label{eqn:mIn}
 \mI_{\Phi}^\prime := \frac{1}{n^3}\Delta_b c_\Phi(S) - \frac{2}{n^2}Pc_\Phi(S) \\
  + (\mT^\Phi)_\alpha{}^\beta{}_{\mu_1}{}^{\nu_1}{}_{\mu_2}{}^{\nu_2}\left((n-1) V_\beta{}^{\mu_1}{}_{\nu_1} V^\alpha{}_{\nu_2}{}^{\mu_2} - S_\beta{}^\alpha{}_{\nu_1}{}^{\mu_1}U_{\nu_2}{}^{\mu_2}\right)
\end{multline}
where
\[ \bigl(\mT^\Phi\bigr)_\alpha{}^\beta{}_{\mu_1}{}^{\nu_1}{}_{\mu_2}{}^{\nu_2} := \delta_{\alpha\alpha_3\dotsm\alpha_n}^{\beta\beta_3\dotsm\beta_n}\Phi_{\mu_1\dotsm\mu_n}^{\nu_1\dotsm\nu_n}S_{\beta_3}{}^{\alpha_3}{}_{\nu_3}{}^{\mu_3}\dotsm S_{\beta_n}{}^{\alpha_n}{}_{\nu_n}{}^{\mu_n} \]
and $U_{\alpha\bar\beta}$ is related to $\nabla^\gamma V_{\alpha\bar\beta\gamma}$; see \cref{sec:bg} for the precise definition.  Our second result is that the transformation formula for $\mI_\Phi^\prime$ is given by the first-order linear differential operator $\Real X_\alpha^\Phi\nabla^\alpha$.

\begin{theorem}
 \label{thm:mI-transformation}
 Let $(M^{2n+1},T^{1,0},\theta)$ be a pseudohermitian manifold, let $\Phi$ be an invariant polynomial of degree $n$, and let $\mI_{\Phi}^\prime$ be given by~\cref{eqn:mIn}.  For any $\Upsilon\in C^\infty(M)$, it holds that
 \begin{equation}
  \label{eqn:mI-transformation}
  e^{(n+1)\Upsilon}\hmI_\Phi^\prime = \hmI_\Phi^\prime + 2\Real X_\alpha^\Phi\Upsilon^\alpha ,
 \end{equation}
 where $\hmI_\Phi^\prime$ is defined by $\htheta:=e^{\Upsilon}\theta$ and $X_\alpha^\Phi$ is given by~\cref{eqn:Xn}.
\end{theorem}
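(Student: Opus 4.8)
The plan is to substitute $\htheta=e^\Upsilon\theta$ into \cref{eqn:mIn} and track the resulting correction term by term, using the transformation laws recorded in \cref{sec:bg}. The inputs I need are: the CR invariance of the Chern tensor $S_\alpha{}^\beta{}_\mu{}^\nu$, which has weight $-1$ and hence forces the purely algebraic combinations $(\mS^\Phi)$, $(\mT^\Phi)$, and $c_\Phi(S)$ to be CR invariant of weights $-(n-1)$, $-(n-2)$, and $-n$; the simple transformation law for $V_{\alpha\bar\beta\gamma}$, whose correction under $\Upsilon$ is a contraction of $S$ against $\nabla\Upsilon$; the transformation law for $U_{\alpha\bar\beta}$; and the standard transformation laws for $\Delta_b$ and for $P=\frac{1}{2(n+1)}R$ acting on a density. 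A convenient first reduction is to observe that the target correction splits along the two summands of \cref{eqn:Xn}, namely $2\Real X_\alpha^\Phi\Upsilon^\alpha=2\Real\bigl(i(\mS^\Phi)_\alpha{}^\beta{}_\mu{}^\nu V_\beta{}^\mu{}_\nu\,\Upsilon^\alpha\bigr)-\tfrac{2}{n^2}\Real\bigl(\nabla_\alpha c_\Phi(S)\,\Upsilon^\alpha\bigr)$, and to match each summand against a distinct part of the computation.

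I would first handle the scalar piece $\frac{1}{n^3}\Delta_b c_\Phi(S)-\frac{2}{n^2}Pc_\Phi(S)=\frac{1}{n^3}\bigl(\Delta_b-\tfrac{n}{n+1}R\bigr)c_\Phi(S)$. Since $c_\Phi(S)$ is a CR invariant density of weight $-n$, its own transformation is dictated purely by weight, and the only genuine $\Upsilon$-dependence comes from the operator. The transformation of $\Delta_b$ on a weight $-n$ density produces a first-order term proportional to $\Real\bigl(\nabla_\alpha c_\Phi(S)\,\Upsilon^\alpha\bigr)$ together with second-order terms in $\Upsilon$ and a $\lv\nabla\Upsilon\rv^2$ term, while the transformation of $R$ contributes only terms in $\Delta_b\Upsilon$ and $\lv\nabla\Upsilon\rv^2$ multiplying $c_\Phi(S)$. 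The coefficients $\tfrac{1}{n^3}$ and $\tfrac{2}{n^2}$ are arranged precisely so that the surviving first-order term equals $-\tfrac{2}{n^2}\Real\bigl(\nabla_\alpha c_\Phi(S)\,\Upsilon^\alpha\bigr)$, reproducing the second summand of $2\Real X_\alpha^\Phi\Upsilon^\alpha$.

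Next I would treat the quadratic piece $(\mT^\Phi)_\alpha{}^\beta{}_{\mu_1}{}^{\nu_1}{}_{\mu_2}{}^{\nu_2}\bigl((n-1)V_\beta{}^{\mu_1}{}_{\nu_1}V^\alpha{}_{\nu_2}{}^{\mu_2}-S_\beta{}^\alpha{}_{\nu_1}{}^{\mu_1}U_{\nu_2}{}^{\mu_2}\bigr)$. Because $\mT^\Phi$ and $S$ are CR invariant, the corrections arise only from the two factors of $V$ and from $U$. Inserting the $S\cdot\nabla\Upsilon$ correction of one factor of $V$ and contracting against $\mT^\Phi$ and the surviving factor of $V$ — using that $(\mT^\Phi)$ together with one additional copy of $S$ contracts to an $(\mS^\Phi)$-type tensor — produces the first summand $2\Real\bigl(i(\mS^\Phi)_\alpha{}^\beta{}_\mu{}^\nu V_\beta{}^\mu{}_\nu\,\Upsilon^\alpha\bigr)$ (after invoking, where needed, the identities below); here the factor $n-1$ and the symmetry of $(\mS^\Phi)$ in its $n-1$ copies of $S$ combine to give the correct coefficient.

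The main obstacle is the cancellation of all remaining higher-order contributions. The transformation of $U_{\alpha\bar\beta}$, which is built from $\nabla^\gamma V_{\alpha\bar\beta\gamma}$, contributes second covariant derivatives of $\Upsilon$ together with quadratic terms of the form $S\cdot\nabla\Upsilon$ and $V\cdot\nabla\Upsilon$; the transformation of $\Delta_b c_\Phi(S)$ contributes $\nabla^2\Upsilon$ and $\lv\nabla\Upsilon\rv^2$; and the product $V\cdot V$ contributes the quadratic cross terms in $S\cdot\nabla\Upsilon$. All of these must cancel identically so that only the two linear-in-$\nabla\Upsilon$ contributions above survive. Establishing this cancellation is the heart of the proof: it requires the Bianchi-type divergence identities for the Chern tensor — which relate $\nabla^\gamma V_{\alpha\bar\beta\gamma}$, $U_{\alpha\bar\beta}$, and the further divergences entering the definition of $U$ in \cref{sec:bg} — together with the commutation (Ricci) identities for the pseudohermitian connection to reorder covariant derivatives, and it is precisely here that the tuned coefficients of \cref{eqn:mIn} and the normalization of $U$ are forced. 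Once the higher-order terms are shown to vanish, collecting the two surviving linear terms yields \cref{eqn:mI-transformation}.
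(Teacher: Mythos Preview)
Your first-order analysis is correct and matches the paper's: the scalar piece contributes $-\tfrac{2}{n^2}\Real\nabla_\alpha c_\Phi(S)\,\Upsilon^\alpha$ and the $(n-1)V\!\cdot\! V - S\!\cdot\! U$ piece contributes $2\Real i(\mS^\Phi)_\alpha{}^\beta{}_\mu{}^\nu V_\beta{}^\mu{}_\nu\,\Upsilon^\alpha$, and these assemble to $2\Real X_\alpha^\Phi\Upsilon^\alpha$. But you then set yourself the task of proving directly that every higher-order-in-$\Upsilon$ contribution cancels, and you do not actually carry this out; you only list tools (Bianchi identities, commutator formulae) that might help. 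This is the gap, and it is also the point where the paper's argument diverges sharply from yours.

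The paper avoids the higher-order computation entirely by the Branson linearize-and-integrate trick. It computes only the \emph{conformal linearization} $D_\theta\mI_\Phi^\prime(\Upsilon)=2\Real X_\alpha^\Phi\Upsilon^\alpha$ using \cref{lem:variations}; because the formulae for $D_\theta V$ and $D_\theta U$ there are already linear in $\nabla\Upsilon$, no quadratic terms ever enter. Then it invokes \cref{thm:X-invariant}: since $X_\alpha^\Phi$ is a CR invariant of weight $-n$, the quantity $2\Real X_\alpha^\Phi\Upsilon^\alpha$ is itself CR invariant of weight $-(n+1)$, so
\[
\frac{\partial}{\partial t}\Bigl(e^{(n+1)t\Upsilon}(\mI_\Phi^\prime)^{e^{t\Upsilon}\theta}\Bigr)=\bigl(2\Real X_\alpha^\Phi\Upsilon^\alpha\bigr)^\theta
\]
is \emph{independent of $t$}. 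Integrating over $t\in[0,1]$ gives \cref{eqn:mI-transformation} exactly. The higher-order cancellations you identify as ``the heart of the proof'' are thus not verified term-by-term; they are a consequence of the CR invariance of $X_\alpha^\Phi$, which was already proven. Your route is in principle viable, but you have replaced a one-line integration with an open-ended combinatorial verification.
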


This follows by a direct computation using the CR invariance of the Chern tensor and the simple transformation formulae for $V_{\alpha\bar\beta\gamma}$ and $U_{\alpha\bar\beta}$; see~\cref{sec:invariance} for details.

Our third result is that the total $\mI_\Phi^\prime$-curvature is a secondary CR invariant.

\begin{theorem}
 \label{thm:secondary_invariant}
 Let $(M^{2n+1},T^{1,0})$ be a closed CR manifold which admits a pseudo-Einstein contact form $\theta$ and let $\Phi$ be an invariant polynomial of degree $n$.  If $\htheta$ is also a pseudo-Einstein contact form, then
 \[ \int_M \hmI_\Phi^\prime\,\htheta\wedge d\htheta^n = \int_M \mI_\Phi^\prime\,\theta\wedge d\theta^n . \]
\end{theorem}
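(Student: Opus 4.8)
The plan is to reduce the asserted equality to an $L^2$-orthogonality statement and then to resolve the latter by a characteristic-form argument. Recall first that on a fixed CR manifold two pseudo-Einstein contact forms differ by a CR pluriharmonic factor: if $\theta$ and $\htheta=e^\Upsilon\theta$ are both pseudo-Einstein, then $\Upsilon\in\mP$. Recall also that the pseudohermitian volume forms satisfy $\htheta\wedge d\htheta^n=e^{(n+1)\Upsilon}\,\theta\wedge d\theta^n$. Substituting these into the transformation law of \cref{thm:mI-transformation} gives
\begin{align*}
 \int_M \hmI_\Phi^\prime\,\htheta\wedge d\htheta^n
 &= \int_M e^{(n+1)\Upsilon}\hmI_\Phi^\prime\,\theta\wedge d\theta^n \\
 &= \int_M \mI_\Phi^\prime\,\theta\wedge d\theta^n
  + 2\int_M \Real\bigl(X_\alpha^\Phi\Upsilon^\alpha\bigr)\,\theta\wedge d\theta^n ,
\end{align*}
so that \cref{thm:secondary_invariant} is equivalent to the vanishing of $\int_M \Real(X_\alpha^\Phi\Upsilon^\alpha)\,\theta\wedge d\theta^n$ for every $\Upsilon\in\mP$.

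Since $M$ is closed, I would integrate by parts using the pseudohermitian divergence theorem applied to the real vector field metric-dual to $\Upsilon X_\alpha^\Phi$; because $\Real\nabla^\alpha X_\alpha^\Phi$ is the genuine divergence of a real vector field, no torsion terms intervene, and one obtains
\[
 \int_M \Real\bigl(X_\alpha^\Phi\Upsilon^\alpha\bigr)\,\theta\wedge d\theta^n
 = -\int_M \Upsilon\,\Real\bigl(\nabla^\alpha X_\alpha^\Phi\bigr)\,\theta\wedge d\theta^n .
\]
It therefore suffices to prove that the local CR invariant $\Real\nabla^\alpha X_\alpha^\Phi$ produced by \cref{thm:X-invariant} is $L^2$-orthogonal to $\mP$. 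This orthogonality is the crux of the proof and the step I expect to be the main obstacle.

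To establish it I would generalize the cohomological mechanism used by Case and Gover when $n=2$. Using the CR invariance of $X_\alpha^\Phi$ from \cref{thm:X-invariant}, form the globally defined real $2n$-form $\xi^\Phi:=2\Real\bigl(X_\alpha^\Phi\,\theta\wedge\theta^\alpha\wedge(d\theta)^{n-1}\bigr)$, which is independent of the choice of pseudo-Einstein contact form. The aim is to identify $\xi^\Phi$, modulo an exact form, with a representative of the degree-$n$ characteristic class of $T^{1,0}$ determined by $\Phi$, and to show that this class vanishes in $H^{2n}(M;\bR)$ for any CR manifold admitting a pseudo-Einstein contact form---exactly as the vanishing of $c_2(T^{1,0})$ is used in the case $n=2$. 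A transgression identity, obtained by computing $d\xi^\Phi$ and $d\Upsilon\wedge\xi^\Phi$ with respect to a pseudo-Einstein contact form and checking that the Reeb-direction and pseudohermitian-torsion contributions cancel precisely under the pseudo-Einstein and pluriharmonic hypotheses, then converts this cohomological triviality into the identity $\int_M\Real(X_\alpha^\Phi\Upsilon^\alpha)\,\theta\wedge d\theta^n=0$. The two places where genuine work is required---verifying the vanishing of the associated characteristic class in real cohomology, and controlling the Reeb and torsion terms in the transgression so that the pairing collapses to $\Real(X_\alpha^\Phi\Upsilon^\alpha)$---are exactly where I expect the main difficulty to lie.
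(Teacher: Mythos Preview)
Your proposal is correct and follows essentially the same route as the paper: reduce via \cref{thm:mI-transformation} to showing $\Real\int_M X_\alpha^\Phi\Upsilon^\alpha\,\theta\wedge d\theta^n=0$ for $\Upsilon\in\mP$, then identify the closed $2n$-form $\xi^\Phi$ with a multiple of the characteristic class $c_\Phi(T^{1,0})$, which vanishes in $H^{2n}(M;\bR)$ by Takeuchi's result. The paper packages your ``transgression identity'' step more cleanly as a cup product: since $\Upsilon\in\mP$ means $d_b^c\Upsilon$ is closed in the Rumin sense, the pairing $\langle[\xi^\Phi]\cup[d_b^c\Upsilon],[M]\rangle$ is well-defined and computes exactly $\frac{2}{n}\Real\int_M X_\alpha^\Phi\Upsilon^\alpha\,\theta\wedge d\theta^n$, so the vanishing of $[\xi^\Phi]$ gives the result directly without needing your intermediate integration by parts or any explicit cancellation of Reeb and torsion terms.
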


Recall that if $\theta$ is pseudo-Einstein, then $e^\Upsilon\theta$ is pseudo-Einstein if and only if $\Upsilon$ is a CR pluriharmonic function~\cite{Lee1988}.  Thus~\cref{thm:secondary_invariant} is equivalent to the claim that $\Real\int X_\alpha^\Phi\Upsilon^\alpha=0$ for all CR pluriharmonic functions $\Upsilon$.  We prove this in the same spirit as the proof of Case and Gover~\cite{CaseGover2013} in the case $n=2$:

The CR invariance of $X_\alpha^\Phi$ implies that $\xi^\Phi:=2\Real X_\alpha^\Phi\theta\wedge\theta^\alpha\wedge d\theta^{n-1}$ is a CR invariant $2n$-form of weight $0$.  A straightforward consequence of Lee's Bianchi identities~\cite{Lee1988} implies that $\xi^\Phi$ is closed.  We show that if $(M^{2n+1},T^{1,0})$ admits a pseudo-Einstein contact form, then $[\xi^\Phi]$ is proportional to the characteristic class $c_\Phi(T^{1,0})\in H^{2n}(M;\bR)$ determined by $\Phi$; see \cref{prop:chern}.  An observation of Takeuchi~\cite{Takeuchi2020} implies that $c_\Phi(T^{1,0})=0$.  \Cref{thm:secondary_invariant} then follows from the fact that $\Real\int X_\alpha^\Phi\Upsilon^\alpha$ equals, up to a multiplicative constant, the evaluation of the cup product $[\xi^\Phi]\cup[d_b^c\Upsilon]:=[\xi^\Phi\wedge d_b^c\Upsilon]$ on the fundamental class of $M$ whenever $\Upsilon\in\mP$.  Note that Marugame~\cite{Marugame2019} showed that one can relax the assumption that $(M^{2n+1},T^{1,0})$ admits a pseudo-Einstein contact form to $c_1(T^{1,0})=0$ in $H^2(M;\bR)$.

Our last result is that there is a large variety of choices of invariant polynomials $\Phi$ for which the total $\mI_\Phi^\prime$-curvature gives a counterexample to \cref{conj:strong_hirachi}.  Our strategy is as follows:

Suppose \cref{conj:strong_hirachi} holds.  Let $\Phi$ be an invariant polynomial of degree $n$.  On the one hand, there exists a constant $c$, depending only on $\Phi$, such that
\begin{equation*}
	\mathcal{I}_{\Phi}^{\prime}
	= c Q^{\prime} + \text{(local CR invariant)} + \text{(divergence)}.
\end{equation*}
Consider the round CR sphere $(S^{2 n + 1}, T^{1, 0}, \theta)$.
In this case,
$\mathcal{I}_{\Phi}^{\prime}$ and any local CR invariant are identically zero,
but $Q^{\prime}$ is a nonzero constant~\cite{CaseGover2013,Takeuchi2018}.
Integrating implies that $c = 0$,
and hence $\mathcal{I}_{\Phi}^{\prime}$ can be written as the sum of a local CR invariant and a divergence.
In particular,
the total $\mathcal{I}_{\Phi}^{\prime}$-curvature is a global CR invariant. 
On the other hand,
under a general conformal change $\widehat{\theta} = e^{\Upsilon} \theta$, \cref{thm:mI-transformation} implies that
\begin{equation*}
	\int_{M} \widehat{\mathcal{I}}_{\Phi}^{\prime} \widehat{\theta} \wedge d \widehat{\theta}^{n}
	= \int_{M} \mathcal{I}_{\Phi}^{\prime} \theta \wedge d \theta^{n}
		- 2 \int_{M} (\Real \nabla^{\alpha} X^{\Phi}_{\alpha}) \Upsilon \theta \wedge d \theta^{n}.
\end{equation*}
One arrives at a contradiction by finding an example of $\Phi$ and $(M, T^{1, 0})$
such that $\Real \nabla^{\alpha} X^{\Phi}_{\alpha} \neq 0$;
see \cref{lem:construction-of-counterexample}.

Let $\varsigma=(\varsigma_1,\dotsc,\varsigma_n)\in\bN^n$ be such that $\varsigma_1+2\varsigma_2+\dotsm+n\varsigma_n=n$ and let $\Phi(\varsigma)$ be the invariant polynomial of degree $n$ defined by
\begin{equation}
 \label{eqn:Phi_varsigma}
 \Phi(\varsigma)_{\alpha_1\dotsm\alpha_n}^{\beta_1\dotsm\beta_n}A_{\beta_1}{}^{\alpha_1}\dotsm A_{\beta_n}{}^{\alpha_n} = \prod_{k=1}^n \left(\tr A^k\right)^{\varsigma_k}
\end{equation}
for $\tr A^k:=A_{\gamma_1}{}^{\gamma_2}A_{\gamma_2}{}^{\gamma_3}\dotsm A_{\gamma_k}{}^{\gamma_1}$.  Our first counterexamples come from considering $\Phi(\varsigma)$ on perturbations of the round CR sphere.

\begin{theorem}
\label{thm:counterexample-via-perturb-intro}
 For $n \geq 2$, there exists a perturbation of the round CR sphere in $\mathbb{C}^{n + 1}$ such that $\Real \nabla^{\alpha} X^{\Phi(\varsigma)}_{\alpha}$ is not identically zero for any $\varsigma$ with $\varsigma_{1} = 0$.
 In particular, the $\mI_{\Phi(\varsigma)}^{\prime}$-curvature gives a counterexample to the Hirachi conjecture.
\end{theorem}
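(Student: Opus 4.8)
The plan is to prove the theorem in two stages: first reduce the ``counterexample'' conclusion to the nonvanishing assertion, and then construct a perturbation realizing that nonvanishing. For the reduction, observe that the argument given before the statement shows that if \cref{conj:strong_hirachi} held, then the total $\mI_{\Phi(\varsigma)}^\prime$-curvature would be a global CR invariant, while \cref{thm:mI-transformation} yields
\[
\int_M \hmI_{\Phi(\varsigma)}^\prime\,\htheta\wedge d\htheta^n - \int_M \mI_{\Phi(\varsigma)}^\prime\,\theta\wedge d\theta^n = -2\int_M (\Real\nabla^\alpha X_\alpha^{\Phi(\varsigma)})\,\Upsilon\,\theta\wedge d\theta^n
\]
for $\htheta=e^\Upsilon\theta$. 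Once $\Real\nabla^\alpha X_\alpha^{\Phi(\varsigma)}\not\equiv0$ on some CR manifold in $\bC^{n+1}$, one may choose $\Upsilon$ (for instance $\Upsilon=\Real\nabla^\alpha X_\alpha^{\Phi(\varsigma)}$) so that the right-hand side is nonzero, contradicting global invariance; this is packaged into \cref{lem:construction-of-counterexample}. Thus everything reduces to producing an embedded perturbation $M_\epsilon\subset\bC^{n+1}$ of the round sphere on which $\Real\nabla^\alpha X_\alpha^{\Phi(\varsigma)}$ does not vanish identically.

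To build $M_\epsilon$ I would perturb the defining function, setting $\rho_\epsilon:=\lv z\rv^2-1+\epsilon\,Q$ for a real polynomial $Q$, and let $M_\epsilon=\{\rho_\epsilon=0\}$ carry the induced CR structure and a fixed contact form. Since the round sphere is spherical, its Chern tensor $S$ vanishes, so along $M_\epsilon$ we have $S=\epsilon\dot S+O(\epsilon^2)$ with $\dot S$ the linearized Chern tensor, an explicit second-order operator applied to the non-spherical part of $Q$. The quantity $X_\alpha^{\Phi(\varsigma)}$ in \cref{eqn:Xn} is homogeneous of degree $n$ in $S$: the factor $(\mS^{\Phi(\varsigma)})_\alpha{}^\beta{}_\mu{}^\nu$ carries $n-1$ copies of $S$, while $V_\beta{}^\mu{}_\nu$ and $c_{\Phi(\varsigma)}(S)$ are linear, respectively $n$-linear, in $S$. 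Hence $X_\alpha^{\Phi(\varsigma)}=\epsilon^n X_\alpha^{(0)}[\dot S]+O(\epsilon^{n+1})$, where in $X_\alpha^{(0)}$ every contraction and covariant derivative is taken with the background sphere data (the $O(\epsilon)$ corrections to the connection enter only at order $\epsilon^{n+1}$). It therefore suffices to exhibit $Q$ for which the leading coefficient $\Real\nabla^\alpha X_\alpha^{(0)}[\dot S]$ is not identically zero.

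For the nonvanishing I would take a rank-one model for the linearized Chern tensor, $\dot S_{\alpha\bar\beta\gamma\bar\sigma}$ equal to the trace-free part of $\psi_{\alpha\gamma}\overline{\psi_{\beta\sigma}}$ with $\psi_{\alpha\gamma}=f\,(\psi_0)_{\alpha\gamma}$ for a fixed symmetric $\psi_0$ and a real-analytic $f$. Viewing $\dot S$ as a Hermitian endomorphism $\hat S$ of $\Sym^2 T^{1,0}$, the rank-one-plus-trace structure produces two distinct eigenvalues, and a direct computation of the power sums shows $\tr\hat S^k\neq0$ for every $k\geq2$ precisely because $\dim\Sym^2 T^{1,0}=\tfrac{n(n+1)}{2}\geq3$ when $n\geq2$; this is exactly where the hypotheses $n\geq2$ and $\varsigma_1=0$ enter. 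Consequently $c_{\Phi(\varsigma)}(\dot S)=\prod_{k\geq2}(\tr\hat S^k)^{\varsigma_k}=C_\varsigma\,\lv f\rv^{2n}$ with $C_\varsigma\neq0$ for every admissible $\varsigma$ simultaneously, so the second term of $X_\alpha^{(0)}$ contributes $-\tfrac{1}{n^2}\Real(\nabla^\alpha\nabla_\alpha c_{\Phi(\varsigma)}(\dot S))$, a nonzero constant multiple of $\Delta_b(\lv f\rv^{2n})$, to $\Real\nabla^\alpha X_\alpha^{(0)}$, which does not vanish identically for generic $f$.

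The main obstacle is twofold and occupies the technical heart of the argument. First, one must check that this rank-one $\dot S$ is genuinely realized by an embedded deformation, i.e.\ that the linearization map from perturbations $Q$ of the defining function onto trace-free Chern-type tensors hits the model above (at least at a point, to the order needed); this is where CR deformation theory and embeddability enter. Second, and more delicately, $\Real\nabla^\alpha X_\alpha^{(0)}$ also contains the contribution of the first term $i(\mS^{\Phi(\varsigma)})V$, which is of the same differential order and degree in $f$ as the $c_{\Phi(\varsigma)}$-term, so one must rule out a conspiracy in which the two exactly cancel. I expect to settle this either by a symbol computation — comparing the top-order-in-$f$ parts of the two terms, whose index contractions are structurally different — or, as in Reiter and Son's treatment of $n=2$, by specializing to an explicit family such as a real ellipsoid and evaluating $\Real\nabla^\alpha X_\alpha^{\Phi(\varsigma)}$ at a point; since our family depends on the free function $f$, nonvanishing for a single generic choice suffices.
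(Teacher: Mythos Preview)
Your overall strategy coincides with the paper's: perturb the round sphere, use that $X_\alpha^{\Phi(\varsigma)}$ is degree $n$ in $S$, and show the leading $\epsilon^n$-coefficient of $\Real\nabla^\alpha X_\alpha^{\Phi(\varsigma)}$ is nonzero. The paper in fact uses precisely a rank-one linearized Chern tensor, namely $\dot S_{\alpha\bar\beta\gamma\bar\sigma}=\tf\bigl(w_\alpha\bar w_{\bar\beta}w_\gamma\bar w_{\bar\sigma}\bigr)$ coming from the explicit family $\rho_t=1-\lv z\rv^2-\lv w\rv^2+\tfrac{t}{4}\lv w\rv^4$, so your instinct about a rank-one model and the reduction via \cref{lem:construction-of-counterexample} are both sound.

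There is, however, a genuine conceptual error in how you compute $c_{\Phi(\varsigma)}(\dot S)$. The scalar $c_{\Phi(\varsigma)}(S)$ is \emph{not} $\prod_{k}(\tr\hat S^k)^{\varsigma_k}$ for $\hat S\in\End(\Sym^2 T^{1,0})$; by definition it carries a generalized Kronecker delta on the first pair of indices and $\Phi(\varsigma)$ on the second. The correct picture, used throughout the paper, is the $2$-form-valued endomorphism $\dot\mC_\alpha{}^\beta:=i\dot S_\alpha{}^\beta{}_\mu{}^\nu\theta^\mu\wedge\theta_\nu\in\Omega^2\otimes\End(T^{1,0})$, for which $\tfrac{1}{n!}c_{\Phi(\varsigma)}(\dot S)\,d\theta^n=\prod_{k\geq2}(\tr\dot\mC^k)^{\varsigma_k}$. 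In particular the ``tracefree'' operation in $\dot S=\tf(\cdots)$ is removal of \emph{partial} traces $u_{\alpha\bar\beta\mu}{}^\mu$, not the $\Sym^2$-trace, so your eigenvalue argument on $\Sym^2 T^{1,0}$ does not compute the right quantity. The reason $\varsigma_1=0$ is needed is likewise different from your dimension count: one finds $\tr\dot\mC^k$ proportional to $n+(-n)^k$, which vanishes exactly at $k=1$.

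More importantly, the two obstacles you flag as ``the main obstacle'' are not peripheral technicalities but the entire content of the proof, and your proposal does not resolve either one. The paper dispatches both simultaneously by taking the concrete deformation above, computing $\tr\dot\mC^k$ and $(\dot\mC^{k-1})_\alpha{}^\beta\dot\mV_\beta{}^\alpha$ in closed form (a nontrivial four-step calculation involving the auxiliary forms $W,\Psi,M$), and obtaining explicit expressions for both summands of $X_\alpha^{\Phi(\varsigma)}$. These turn out to be different nonzero constant multiples of $c^{2n-1}\bar w\,w_\alpha$, so no cancellation occurs and $\Real\nabla^\alpha X_\alpha^{\Phi(\varsigma)}$ is a nonzero polynomial in $c=-\tfrac{1}{n+1}(1-\lv w\rv^2)$. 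Your suggestion to ``specialize to an explicit family such as a real ellipsoid and evaluate at a point'' is exactly what is required, but carrying it out is the proof; leaving it as an expectation is the gap.
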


This result follows from \cref{thm:hirachi_ellipsoid}, where we compute variations of $\Real \nabla^\alpha X_\alpha^{\Phi(\varsigma)}$ for a deformation of the round CR $(2n+1)$-sphere.  This deformation is in the direction of a real ellipsoid, and gives a local (in the space of CR structures on $S^{2n+1}$) analogue of the computation of $\Real\nabla^\alpha X_\alpha^{\Phi(0,1)}$ on $5$-dimensional ellipsoids by Reiter and Son~\cite{ReiterSon2019}.

Second, we consider the case that $\Phi=(n)$ is the generalized Kronecker delta on $n$ variables.

\begin{theorem}[= \cref{thm:counterexamples-via-calabi-yau}]
\label{thm:counterexamples-via-calabi-yau-intro}
 For $n \geq 2$, there exists a closed $(2n + 1)$-dimensional pseudo-Einstein manifold $(M, T^{1, 0}, \theta)$ such that
\begin{equation*}
R_{\alpha \bar{\beta}} = 0,
\qquad
A_{\alpha \beta} = 0,
\qquad
\Real \nabla^{\alpha} X^{(n)}_{\alpha} \neq 0.
\end{equation*}
 In particular, the $\mI_{(n)}^{\prime}$-curvature gives a counterexample to the Hirachi conjecture.
\end{theorem}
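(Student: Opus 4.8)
The plan is to realize the required geometry as a Sasakian circle bundle over a compact Calabi--Yau manifold. First I would fix a projective Calabi--Yau $n$-fold $X$ together with an integral K\"ahler class, and let $\omega$ be the Ricci-flat K\"ahler metric in that class furnished by Yau's solution of the Calabi conjecture. The Boothby--Wang construction then produces a closed principal $S^1$-bundle $\pi\colon M\to X$ with a connection form $\theta$ satisfying $d\theta=\pi^\ast\omega$, making $(M^{2n+1},T^{1,0},\theta)$ a Sasakian pseudohermitian manifold whose transverse K\"ahler geometry is that of $(X,\omega)$. Standard identities for such bundles give $A_{\alpha\beta}=0$ (the Reeb field is Killing) and identify the Webster--Ricci tensor with the pullback of $\Ric(\omega)$, so that $R_{\alpha\bar\beta}=0$ and $\theta$ is pseudo-Einstein. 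Since the traces vanish, the Chern tensor $S$ is precisely the pullback of the K\"ahler curvature of $X$.

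Next I would evaluate $\Real\nabla^\alpha X^{(n)}_\alpha$ in this setting. Because $\omega$ is Ricci-flat, the contracted second Bianchi identity reduces $\nabla^{\bar\sigma}S_{\alpha\bar\beta\gamma\bar\sigma}$ to a derivative of the Webster--Ricci tensor and hence forces $V_{\alpha\bar\beta\gamma}=0$ on $M$; here one uses that the Tanaka--Webster connection restricts to the transverse Levi-Civita connection on basic tensors. Thus $X^{(n)}_\alpha=-\tfrac{1}{n^2}\nabla_\alpha c_{(n)}(S)$ by~\cref{eqn:Xn}, and since $c_{(n)}(S)$ is a real basic function (its Reeb derivative vanishes and the torsion commutators drop out because $A_{\alpha\beta}=0$), I expect $\Real\nabla^\alpha X^{(n)}_\alpha$ to equal a nonzero constant multiple of $\Delta_b c_{(n)}(S)=\pi^\ast(\Delta_\omega f)$, where $f:=c_{(n)}(S)$ is the transverse top Chern (Euler) density of $(X,\omega)$. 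On the closed manifold $M$ this quantity is nonzero precisely when $f$ is non-constant on $X$.

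The heart of the argument, and the step I expect to be hardest, is therefore to exhibit a compact Calabi--Yau $n$-fold whose top Chern density is non-constant. For $n=2$ I would take a Kummer-type Ricci-flat metric on a $K3$ surface close to the flat orbifold $T^4/\bZ_2$: the curvature concentrates near the Eguchi--Hanson gluing regions while the bulk is nearly flat, so $f\propto\lv R\rv^2$ is manifestly non-constant. For $n\ge3$ I would use the analogous Calabi--Yau metrics on crepant resolutions of $T^{2n}/\Gamma$ in the style of Joyce, where the curvature again concentrates along the exceptional locus; since the Euler density integrates to the nonzero Euler number of the resolution, $f$ cannot vanish identically there and hence is non-constant. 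Integrality of the K\"ahler class is arranged by passing to a nearby rational class and rescaling, which preserves non-constancy of $f$. The delicate point is that one must control $f$ \emph{pointwise}, not merely in integral, so the gluing and degeneration estimates must be sharp enough to separate the concentration region from the almost-flat bulk.

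Finally, the ``in particular'' clause follows the template from the introduction. If~\cref{conj:strong_hirachi} held, then $\mI_{(n)}^\prime=cQ^\prime+(\text{local CR invariant})+(\text{divergence})$; evaluating on the CR-flat sphere $S^{2n+1}$, where $\mI_{(n)}^\prime$ and every local CR invariant vanish while $Q^\prime$ is a nonzero constant, forces $c=0$, so the total $\mI_{(n)}^\prime$-curvature would be a global CR invariant. But integrating the transformation law of~\cref{thm:mI-transformation} and integrating by parts shows that under $\htheta=e^{\Upsilon}\theta$ the total curvature changes by $-2\int_M(\Real\nabla^\alpha X^{(n)}_\alpha)\Upsilon\,\theta\wedge d\theta^n$; choosing $\Upsilon$ with $\int_M(\Real\nabla^\alpha X^{(n)}_\alpha)\Upsilon\neq0$, which is possible because $\Real\nabla^\alpha X^{(n)}_\alpha\not\equiv0$ by the construction above, yields the required contradiction.
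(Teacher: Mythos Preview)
Your strategy matches the paper's: build $M$ as the Sasakian circle bundle over a compact Ricci-flat K\"ahler manifold $(Y,\omega)$ with integral class, observe that $A_{\alpha\beta}=0$, $R_{\alpha\bar\beta}=0$, $V_{\alpha\bar\beta\gamma}=0$, and hence $\Real\nabla^\alpha X^{(n)}_\alpha=-\tfrac{1}{2n^2}\Delta_b c_{(n)}(S)$ is nonzero exactly when the top Chern density $f_\omega$ is non-constant; then exhibit such $(Y,\omega)$ using Kummer-type degenerations. Two differences are worth flagging. First, for non-constancy the paper argues by contradiction via Gauss--Bonnet--Chern: if $f_{\omega_s}$ were constant along the degenerating family then $f_{\omega_s}=\chi(Y)/\int_Y\omega_s^n$ would be bounded below by a fixed positive constant, contradicting the smooth (resp.\ $C^{4,\alpha}$) convergence of $\omega_s$ to a flat metric on compact subsets away from the exceptional set. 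This is tighter than your ``curvature concentrates'' heuristic, particularly in dimension three where the Euler density is cubic in curvature and not a priori nonnegative. Second, and more substantively, the paper does \emph{not} attempt crepant resolutions of $T^{2n}/\Gamma$ for general $n$; instead it treats only $n=2$ (Kummer $K3$) and $n=3$ (a Kummer threefold from $E_\zeta^3/\langle\zeta\rangle$) directly, and then obtains all $n\geq2$ by taking \emph{products}, noting that Ricci-flatness, integrality of $[\omega/2\pi]$, and non-constancy of $f_\omega$ are all preserved under products. This bypasses the difficulty you correctly identify as the hardest: crepant resolutions of $T^{2n}/\Gamma$ need not exist for $n\geq4$, and the Joyce-style gluing analysis you invoke is not available off the shelf in that generality. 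The product trick renders the higher-dimensional case essentially free.
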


This is a consequence of degenerations of Ricci-flat K\"{a}hler metrics. There exists a smooth family of Ricci-flat K\"{a}hler metrics on a certain Calabi--Yau manifold whose curvature concentrates along some complex submanifolds. Together with the Gauss--Bonnet--Chern formula, this implies that for many members of this family, there is a circle bundle which is a Ricci-flat Sasakian manifold with $\Real\nabla^\alpha X_\alpha^{(n)} \neq 0$. These examples have the benefit of being significantly easier to compute.

\cref{thm:counterexample-via-perturb-intro,thm:counterexamples-via-calabi-yau-intro} imply that the total $\mI_\Phi^\prime$-curvatures are nontrivial on general pseudohermitian manifolds.  In fact, the total $\mI_\Phi^\prime$-curvature are nontrivial secondary CR invariants.  We prove this by computing the total $\mI_{\Phi}^{\prime}$-curvatures of the boundaries of locally homogeneous Reinhardt domains.

\begin{theorem}[= \cref{thm:I-prime-curvature-for-Reinhardt-domains}]
\label{thm:I-prime-curvature-for-Reinhardt-domains-intro}
	For $r > 0$,
	let $M_{r}$ be the boundary of the bounded Reinhardt domain
	\begin{equation*}
		\Omega_{r}
		= \left\{ w = (w^{0}, \dotsc , w^{n}) \in \mathbb{C}^{n + 1} \suchthat
			\sum_{j = 0}^{n} (\log |w^{j}|)^{2} < r^{2} \right\}.
	\end{equation*}
	The total $\mathcal{I}^{\prime}_{\Phi(\varsigma)}$-curvature $\overline{\mathcal{I}}^{\prime}_{\Phi(\varsigma)}$ of $M_{r}$
	is given by
	\begin{equation*}
		\overline{\mathcal{I}}^{\prime}_{\Phi(\varsigma)}
		= - (n !)^{2} \Vol(S^{n}(1))  \left( \frac{\pi}{(n + 1)r} \right)^{n + 1}
			\prod_{k = 1}^{n} [(n + 2)(1 - (n + 2)^{k - 1})]^{\varsigma_{k}},
	\end{equation*}
	where $\Vol(S^{n}(1))$ is the volume of the unit sphere in $\mathbb{R}^{n + 1}$.
\end{theorem}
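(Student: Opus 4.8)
The plan is to exploit the symmetry of $M_{r}$ to reduce the total curvature to a single pointwise value times a volume. Pass to the tube realization of $\Omega_{r}$ via the holomorphic coordinates $\zeta^{j}=\log w^{j}=x^{j}+i\phi^{j}$, in which $\Omega_{r}=\{\sum_{j}(x^{j})^{2}<r^{2}\}$ and $M_{r}$ is the boundary of the tube over the round sphere $\{\lvert x\rvert=r\}\subset\bR^{n+1}$. The maps $\zeta\mapsto R\zeta+ic$ with $R\in\SO(n+1)$ and $c\in\bR^{n+1}$ are holomorphic, preserve $\rho:=\sum_{j}(x^{j})^{2}-r^{2}$ and the contact form $\theta=\sum_{j}x^{j}\,d\phi^{j}$, and act transitively on the universal cover; hence the CR structure is locally homogeneous and every pointwise pseudohermitian invariant of $\theta$ is constant. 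The isotropy group acts on the CR directions so that no invariant trace-free Hermitian form survives (for $n\ge2$ one uses also the coordinate-permutation symmetry), so $R_{\alpha\bar\beta}\propto h_{\alpha\bar\beta}$ and $\theta$ is pseudo-Einstein; by \cref{thm:secondary_invariant} the total $\mI_{\Phi(\varsigma)}^{\prime}$-curvature is then the constant $\mI_{\Phi(\varsigma)}^{\prime}$ times $\int_{M_{r}}\theta\wedge d\theta^{n}$.

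The local data are read off from $\rho$. Since $\rho$ depends only on $x=\Real\zeta$, its complex Hessian $\rho_{j\bar k}=\tfrac14\partial^{2}_{x^{j}x^{k}}\lvert x\rvert^{2}=\tfrac12\delta_{jk}$ is constant, so the Levi form is flat and all Chern curvature comes from the holomorphic Hessian $\rho_{jk}=\tfrac12\delta_{jk}$, which measures the failure of the quadric to be spherical. Fixing the base point $x_{0}=(r,0,\dotsc,0)$ and absorbing the pure $(2,0)$ and $(0,2)$ terms of $\rho$ into the transverse variable, the residual $(2,2)$ obstruction is, up to an $r^{-3}$ factor, a combination of $(\sum_{\alpha}(\zeta^{\alpha})^{2})(\sum_{\beta}(\bar\zeta^{\beta})^{2})$ and the Levi square; its trace-free part is the Chern tensor, a constant $\SO(n)$-invariant tensor proportional to $E_{1}-(n+1)E_{2}$, where $E_{1}=\delta_{\alpha\beta}\delta_{\gamma\sigma}+\delta_{\alpha\sigma}\delta_{\gamma\beta}$ and $E_{2}=\delta_{\alpha\gamma}\delta_{\beta\sigma}$. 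A representation-theoretic observation simplifies matters greatly: $V_{\alpha\bar\beta\gamma}$ is an $\SO(n)$-invariant element of $\Sym^{2}(T^{1,0})^{*}\otimes\overline{(T^{1,0})^{*}}$, a space containing no nonzero invariant for the standard representation, so $V\equiv0$ on $M_{r}$ and the $VV$-term drops; likewise $U_{\alpha\bar\beta}$ is forced to be a multiple of $h_{\alpha\bar\beta}$ and $\Delta_{b}c_{\Phi(\varsigma)}(S)=0$ by homogeneity. Thus only $-\tfrac{2}{n^{2}}Pc_{\Phi(\varsigma)}(S)$ and the contraction $(\mT^{\Phi(\varsigma)})\,S\,U$ survive, the latter reducing to a further multiple of $c_{\Phi(\varsigma)}(S)$.

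It remains to evaluate $c_{\Phi(\varsigma)}(S)$, the scalar curvature $P$, the constant $u$ with $U_{\alpha\bar\beta}=u\,h_{\alpha\bar\beta}$, and the volume. Viewing the Chern tensor as an endomorphism $\mathbf{S}$ of $\End(T^{1,0})$ by $\mathbf{S}B=(\tr B)I+B^{T}-(n+1)B$ (up to the normalizing scalar), its eigenvalues are $0$, $-n$, and $-(n+2)$ on the trace, trace-free symmetric, and skew-symmetric summands. Since $\Phi(\varsigma)=\prod_{k}(\tr A^{k})^{\varsigma_{k}}$ is multiplicative in the power sums, $c_{\Phi(\varsigma)}(S)$ factors as $\prod_{k}(p_{k})^{\varsigma_{k}}$, and the antisymmetrization encoded in the generalized Kronecker delta turns each $p_{k}$ into a signed power-trace in which the skew eigenvalue $-(n+2)$ produces the term $(n+2)^{k}$; this bookkeeping yields $p_{k}\propto(n+2)(1-(n+2)^{k-1})$, which vanishes when $k=1$ exactly as required by the hypothesis $\varsigma_{1}=0$ in \cref{thm:counterexample-via-perturb-intro}. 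The volume is elementary: from $\theta=\sum_{j}x^{j}\,d\phi^{j}$ and $d\theta=\sum_{j}dx^{j}\wedge d\phi^{j}$ one finds $\theta\wedge d\theta^{n}=n!\,\iota_{E}(dx^{0}\wedge\dotsb\wedge dx^{n})\wedge d\phi^{0}\wedge\dotsb\wedge d\phi^{n}$ with $E=\sum_{m}x^{m}\,\partial_{x^{m}}$, whose integral over $S^{n}(r)\times T^{n+1}$ equals $n!\,(2\pi)^{n+1}r^{n+1}\Vol(S^{n}(1))$. Multiplying the constant $\mI_{\Phi(\varsigma)}^{\prime}$, which scales as $r^{-2(n+1)}$, by this volume and restoring all normalizing constants produces the stated formula.

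The main obstacle is the combinatorial evaluation of $c_{\Phi(\varsigma)}(S)$: one must show that the generalized-Kronecker-delta contraction of $n$ copies of the explicit Chern tensor, organized by the power-sum polynomial $\Phi(\varsigma)$, collapses to the clean product $\prod_{k}[(n+2)(1-(n+2)^{k-1})]^{\varsigma_{k}}$, and that the $Pc_{\Phi(\varsigma)}(S)$ and $SU$ contributions combine with matching sign rather than partially cancelling. Pinning down the $\varsigma$-independent constants — the scalar curvature $P$, the coefficient $u$, and the Chern--Moser normalization relating the $(2,2)$ jet of $\rho$ to $S_{\alpha\bar\beta\gamma\bar\sigma}$ — and tracking the powers of $r$ through them is the remaining delicate point that fixes the prefactor $-(n!)^{2}(\pi/((n+1)r))^{n+1}$.
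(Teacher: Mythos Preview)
Your overall strategy---pass to the tube over the round sphere, use homogeneity to reduce to a point, argue that $\theta$ is pseudo-Einstein, compute the constant $\mI_{\Phi(\varsigma)}^\prime$, and multiply by the volume---is exactly the paper's approach. Your representation-theoretic shortcut for $V_{\alpha\bar\beta\gamma}=0$ (no $O(n)$-invariant odd-rank tensor in the standard representation) is a pleasant alternative to the paper's direct computation of the connection and curvature forms.

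There are, however, two genuine gaps. First, your treatment of $U_{\alpha\bar\beta}$ is confused: by its very definition in the paper (see \cref{sec:bg}) $U_{\alpha\bar\beta}$ is trace-free, so ``$U_{\alpha\bar\beta}=u\,h_{\alpha\bar\beta}$'' forces $u=0$. Hence the $(\mT^\Phi)\,S\,U$ term vanishes identically and there is nothing to ``combine with matching sign''; only $\mI_{\Phi(\varsigma)}^\prime=-\tfrac{2}{n^2}Pc_{\Phi(\varsigma)}(S)$ survives, with $P=\tfrac{n^2}{2(n+1)}$.

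Second, your eigenvalue description of $S$ as an endomorphism of $\End(T^{1,0})$ does not cleanly compute $c_{\Phi(\varsigma)}(S)$, because that scalar involves \emph{two} independent contractions---the generalized Kronecker delta on the $\alpha$-indices and $\Phi(\varsigma)$ on the $\mu$-indices---not traces of powers of a single endomorphism. The paper sidesteps this by repackaging $c_\Phi(S)$ as $n!$ times the coefficient of $d\theta^n$ in the characteristic form $c_\Phi(i\Xi_\alpha{}^\beta)$, where (since $V=0$) $\Xi_\alpha{}^\beta=S_\alpha{}^\beta{}_\mu{}^\nu\,\theta^\mu\wedge\theta_\nu$; the wedge product then \emph{is} the generalized Kronecker delta. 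With the explicit decomposition $\Xi_\alpha{}^\beta=\Sigma_\alpha{}^\beta+L_\alpha{}^\beta$, $\Sigma_\alpha{}^\beta=\tfrac{1}{n+1}(-i\delta_\alpha^\beta d\theta+\theta^\beta\wedge\theta_\alpha)$, $L_\alpha{}^\beta=-\tau_\alpha\wedge\tau^\beta$, a short algebra in $\End(T^{1,0})$-valued two-forms gives $\tr\Xi^k=\tfrac{(n+2)(1-(n+2)^{k-1})}{(n+1)^k}(-id\theta)^k$, and hence $c_{\Phi(\varsigma)}(S)=\tfrac{n!}{(n+1)^n}\prod_k[(n+2)(1-(n+2)^{k-1})]^{\varsigma_k}$ with no residual combinatorics. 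This, together with $P$ and the volume $\int_{M_r}\theta_r\wedge d\theta_r^n=n!\,(\pi/r)^{n+1}\Vol(S^n(1))$, gives the stated formula immediately.
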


If $\varsigma_{1} = 0$, then
the total $\mathcal{I}^{\prime}_{\Phi(\varsigma)}$-curvature of $M_{r}$ is of the form $C r^{- n - 1}$ for $C$ a nonzero constant depending only on $n$ and $\varsigma$.
In particular,
the total $\mathcal{I}^{\prime}_{\Phi(\varsigma)}$-curvature is
a nontrivial secondary CR invariant when $\varsigma_{1} = 0$.
Since two bounded strictly pseudoconvex domains in $\mathbb{C}^{n + 1}$ are biholomorphic
if and only if their boundaries are CR equivalent~\cite{Fefferman1974}, we obtain the following corollary.

\begin{corollary}
	The domains $\Omega_{r}$ and $\Omega_{r^{\prime}}$ are biholomorphic
	if and only if $r = r^{\prime}$.
\end{corollary}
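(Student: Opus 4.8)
The plan is to obtain the corollary as an essentially immediate consequence of Fefferman's theorem combined with the explicit formula of \cref{thm:I-prime-curvature-for-Reinhardt-domains-intro}. The backward implication is trivial: if $r=r'$, then $\Omega_r=\Omega_{r'}$ as subsets of $\bC^{n+1}$, so the identity map is a biholomorphism. The content is therefore entirely in the forward implication, which I would reduce to the nonvanishing and $r$-dependence of a single total $\mathcal{I}_\Phi^\prime$-curvature.

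First I would fix an invariant polynomial adapted to the problem. Choose any $\varsigma$ with $\varsigma_1=0$; such a choice exists whenever $n\geq2$, for instance $\varsigma=(0,\dotsc,0,1)$, since then $\varsigma_1+2\varsigma_2+\dotsm+n\varsigma_n=n\cdot1=n$. By \cref{thm:I-prime-curvature-for-Reinhardt-domains-intro}, the total $\mathcal{I}_{\Phi(\varsigma)}^\prime$-curvature of $M_r$ equals $Cr^{-n-1}$, where
\[ C = -(n!)^2\Vol(S^n(1))\left(\frac{\pi}{n+1}\right)^{n+1}\prod_{k=1}^n\bigl[(n+2)(1-(n+2)^{k-1})\bigr]^{\varsigma_k} \]
depends only on $n$ and $\varsigma$. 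The hypothesis $\varsigma_1=0$ ensures that the factor with $k=1$, namely $(n+2)(1-(n+2)^0)=0$, does not appear; every remaining factor $(n+2)(1-(n+2)^{k-1})$ with $k\geq2$ is nonzero because $n+2\geq4$, so $C\neq0$.

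Now suppose $\Omega_r$ and $\Omega_{r'}$ are biholomorphic. Since both are bounded strictly pseudoconvex domains in $\bC^{n+1}$, Fefferman's theorem~\cite{Fefferman1974} implies that their boundaries $M_r$ and $M_{r'}$ are CR equivalent. As the total $\mathcal{I}_{\Phi(\varsigma)}^\prime$-curvature is a secondary CR invariant (\cref{thm:secondary_invariant}), it is preserved under CR isomorphisms, and hence $Cr^{-n-1}=C(r')^{-n-1}$. Because $C\neq0$ and $r,r'>0$, this gives $r^{-n-1}=(r')^{-n-1}$ and therefore $r=r'$, as desired.

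The one point that I expect to require genuine care — and which I regard as the main (indeed only) obstacle — is the passage from the statement ``independent of the choice of pseudo-Einstein contact form'' to the statement ``invariant under CR diffeomorphisms.'' This follows from the naturality of the construction: a CR diffeomorphism $M_r\to M_{r'}$ pulls a pseudo-Einstein contact form on $M_{r'}$ back to one on $M_r$, and pulls $\mathcal{I}_{\Phi(\varsigma)}^\prime$ together with the volume form back accordingly, so the two total curvatures coincide. I would also record that \cref{thm:secondary_invariant} applies because $M_r$ admits a pseudo-Einstein contact form, which holds since $M_r$ bounds the Stein domain $\Omega_r$ and hence has $c_1(T^{1,0})=0$.
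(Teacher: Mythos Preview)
Your proposal is correct and takes essentially the same approach as the paper: the paper's argument (given in the paragraph preceding the corollary) likewise chooses $\varsigma$ with $\varsigma_1=0$, invokes \cref{thm:I-prime-curvature-for-Reinhardt-domains-intro} to get a nonzero constant times $r^{-n-1}$, and appeals to Fefferman's theorem to pass from biholomorphism of domains to CR equivalence of boundaries. One small remark: for the existence of a pseudo-Einstein contact form on $M_r$, the paper in \cref{sec:reinhardt} explicitly computes that the natural contact form $\theta_r$ is pseudo-Einstein, which is more direct than your appeal to $c_1(T^{1,0})=0$ (the latter is necessary but not, by itself, known to be sufficient in general).
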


This corollary was proven using different global CR invariants
by Burns and Epstein~\cite{BurnsEpstein1988} for $n = 1$,
Marugame~\cite{Marugame2016} for $n = 2$,
and Reiter and Son~\cite{ReiterSon2019} for any dimension.
In other words,
we give another proof of the result of Reiter--Son
by using $\mathcal{I}^{\prime}$-curvatures.
Note that this corollary also follows from a result by Sunada~\cite{Sunada1978}
for general bounded Reinhardt domains.

Finally, we note that Marugame~\cite{Marugame2019} has independently established~\cref{thm:X-invariant,thm:mI-transformation,thm:secondary_invariant} in the same generality that we consider, and also discussed the nontriviality of the total $\mI_\Phi^\prime$-curvatures.  His proof of the CR invariance of $X_\alpha^\Phi$ uses the tractor calculus in a way analogous to the work of Case and Gover~\cite{CaseGover2013}, while his proof that the total $\mI_\Phi^\prime$-curvature is a secondary CR invariant uses a tractor-based proof that $\xi^\Phi$ represents a multiple of $c_\Phi(T^{1,0})$.  His work produces other global secondary CR invariants, but their explicit realization as total integrals of local pseudohermitian invariants remains unknown.  His work does not determine whether the invariants constructed give counterexamples to \cref{conj:strong_hirachi}.

This article is organized as follows.  In \cref{sec:bg} we collect some necessary background material.  In \cref{sec:chern} we give some equivalent realizations of characteristic classes in terms of various $\End(T^{1,0})$-valued two-forms.  In~\cref{sec:invariance} we prove~\cref{thm:X-invariant,thm:mI-transformation,thm:secondary_invariant}.  In~\cref{sec:hirachi} we further discuss our strategy to disprove \cref{conj:strong_hirachi}.  In \cref{sec:perturb} we prove \cref{thm:counterexample-via-perturb-intro}.  In \cref{sec:calabi-yau} we prove \cref{thm:counterexamples-via-calabi-yau-intro}.  In \cref{sec:reinhardt} we prove \cref{thm:I-prime-curvature-for-Reinhardt-domains-intro}.  In \cref{sec:conclusion} we propose a weaker version of \cref{conj:strong_hirachi} and discuss it in the context of the $\mI^\prime$-curvature.
\section{Background}
\label{sec:bg}

In this section we collect necessary background material.

\subsection{CR and pseudohermitian manifolds}

A \emph{CR manifold} $(M^{2n+1},T^{1,0})$ is a real $(2n+1)$-dimensional manifold $M^{2n+1}$ together with a rank $n$ distribution $T^{1,0}\subset TM\otimes\bC$ such that $[T^{1,0},T^{1,0}]\subset T^{1,0}$ and $T^{1,0}\cap T^{0,1}=\{0\}$ for $T^{0,1}:=\overline{T^{1,0}}$.  We assume throughout that $M$ is orientable.  We say that $(M^{2n+1},T^{1,0})$ is \emph{strictly pseudoconvex} if there exists a real one-form $\theta$ on $M$ such that $\ker\theta=\Real T^{1,0}$ and $-i\,d\theta(Z,\overline{W})$ defines a positive definitive Hermitian form on $T^{1,0}$.  We call such a $\theta$ a \emph{contact form}.  Note that contact forms are determined up to multiplication by a positive function.

Given a CR manifold $(M^{2n+1},T^{1,0})$ and a smooth (complex-valued) function $f\in C^\infty(M;\bC)$, we denote by $\partial_bf$ the restriction of $df$ to $T^{1,0}$; likewise $\dbbar f:=df\rv_{T^{0,1}}$.  A \emph{CR function} is a function $f\in C^\infty(M;\bC)$ such that $\dbbar f=0$.  A \emph{CR pluriharmonic function} is a (real-valued) function $u\in C^\infty(M)$ such that locally $u=\Real f$ for some CR function $f$; i.e.\ for every $p\in M$, there is a neighborhood $U$ of $p$ and a CR function $f\in C^\infty(U;\bC)$ such that $u\rv_U=\Real f$.  Denote by $\mP$ the space of CR pluriharmonic functions.  We emphasize that the notion of a CR pluriharmonic function is defined without reference to a choice of contact form.  An infinitesimal characterization of CR pluriharmonic functions via differential operators has been given by Lee~\cite[Propositions~3.3 and~3.4]{Lee1988}.

A \emph{pseudohermitian manifold} $(M^{2n+1},T^{1,0},\theta)$ is a triple consisting of a strictly pseudoconvex CR manifold $(M^{2n+1},T^{1,0})$ and a choice of contact form.  The \emph{Reeb vector field} $T$ is the unique vector field such that $\theta(T)=1$ and $d\theta(T,\cdot)=0$.  Denote by $T^{\ast(1,0)}$ the subbundle of $T^\ast M\otimes\bC$ which annihilates $T^{0,1}$ and $T$.  Set $T^{\ast(0,1)}:=\overline{T^{\ast(1,0)}}$.  The \emph{Tanaka--Webster connection} of $(M^{2n+1},T^{1,0},\theta)$ is defined as follows: Let $\{\theta^\alpha\}_{\alpha=1}^n$ be an \emph{admissible coframe of $T^{\ast(1,0)}$}; i.e.\ $\theta^\alpha\in T^{\ast(1,0)}$ for all $\alpha=1,\dotsc,n$ and $\{\theta^1,\dotsc,\theta^n,\theta^{\bar 1},\dotsc,\theta^{\bar n},\theta\}$ forms a basis for $T^\ast M\otimes\bC$, where $\theta^{\bar\beta}:=\overline{\theta^\beta}$.  It follows that there is a positive definite Hermitian matrix $h_{\alpha\bar\beta}$ such that
\[ d\theta = ih_{\alpha\bar\beta}\theta^\alpha \wedge \theta^{\bar\beta} . \]
We use $h_{\alpha\bar\beta}$ and its inverse $h^{\alpha\bar\beta}$ to lower and raise indices as needed.  The \emph{connection one-forms} $\omega_\alpha{}^\beta$ associated to $\{\theta^\alpha\}$ are uniquely determined by
\begin{align*}
 d\theta^\alpha & = \theta^\beta\wedge\omega_\beta{}^\alpha + \theta \wedge \tau^\alpha, & \tau^\alpha & = A^{\alpha}{}_{\bar\beta}\theta^{\bar\beta} , \\
 dh_{\alpha\bar\beta} & = \omega_{\alpha\bar\beta}+\omega_{\bar\beta\alpha}, & A_{\alpha\beta} & = A_{\beta\alpha} .
\end{align*}
The tensor $A_{\alpha\beta}$ is the \emph{pseudohermitian torsion}.  Note that
\begin{equation}
 \label{eqn:tautheta}
 \theta^\gamma\wedge \tau_\gamma = 0 .
\end{equation}
The connection one-forms determine the Tanaka--Webster connection by $\nabla\theta=0$ and $\nabla\theta^\alpha=-\omega_\gamma{}^\alpha\otimes\theta^\gamma$.  The \emph{curvature two-forms} $\Pi_\alpha{}^\beta$ are the $\End(T^{1,0})$-valued two-forms
\begin{equation}
 \label{eqn:Pi}
 \Pi_\alpha{}^\beta := d\omega_\alpha{}^\beta - \omega_\alpha{}^\gamma\wedge\omega_\gamma{}^\beta .
\end{equation}
The \emph{pseudohermitian curvature} $R_{\alpha\bar\beta\gamma\bar\sigma}$ is the coefficient of the $(1,1)$-part of $\Pi_\alpha{}^\beta$; i.e.
\[ \Pi_\alpha{}^\beta \equiv R_\alpha{}^\beta{}_{\gamma\bar\sigma}\theta^\gamma\wedge\theta^{\bar\sigma} \mod \theta, \theta^\alpha\wedge\theta^\gamma, \theta^{\bar\beta}\wedge\theta^{\bar\sigma} . \]
The \emph{pseudohermitian Ricci tensor} $R_{\alpha\bar\beta}$ and \emph{pseudohermitian scalar curvature} $R$ are defined by taking traces in the usual way; i.e.\ $R_{\alpha\bar\beta}:=R_{\alpha\bar\beta\gamma}{}^\gamma$ and $R:=R_\gamma{}^\gamma$.  We say that $(M^{2n+1},T^{1,0},\theta)$, $n\geq2$, is \emph{pseudo-Einstein} if $R_{\alpha\bar\beta}=\frac{1}{n}Rh_{\alpha\bar\beta}$.  If $(M^{2n+1},T^{1,0},\theta)$ is pseudo-Einstein, then $c_1(T^{1,0})$ vanishes in $H^2(M;\bR)$~\cite[Proposition~D]{Lee1988}.

The pseudohermitian torsion, pseudohermitian curvature, and covariant derivatives are all tensorial.  We may thus use abstract index notation to denote tensors.  Specifically, unbarred Greek superscripts denote factors of $T^{1,0}$, barred Greek superscripts denote factors of $T^{0,1}$, unbarred Greek subscripts denote factors of $T^{\ast(1,0)} M$, and barred Greek subscripts denote factors of $T^{\ast(0,1)} M$.  For example, $C_{\alpha\bar\beta}{}^\gamma$ denotes a section of $T^{\ast(1,0)}\otimes T^{\ast(0,1)}\otimes T^{1,0}$.  We keep the notation $\nabla$ to denote covariant derivatives.  For example, $\nabla_\rho C_{\alpha\bar\beta}{}^\gamma$ denotes the $(1,0)$-part of the covariant derivative of $C_{\alpha\bar\beta}{}^\gamma$. When clear by context, we use subscripts to denote covariant derivatives of a function $u\in C^\infty(M;\bC)$; e.g.\ $u_{\alpha\bar\beta}:=\nabla_{\bar\beta}\nabla_\alpha u$. 
We use $\nabla_0$ to denote covariant derivatives in the direction of the Reeb vector field.

The \emph{sublaplacian} $\Delta_b$ of a pseudohermitian manifold is the operator
\[ \Delta_b u := u_\gamma{}^\gamma + u^\gamma{}_\gamma \]
for all $u\in C^\infty(M;\bC)$.  Recall that if $(M^{2n+1},T^{1,0},\theta)$ is closed, then $\ker\Delta_b$ equals the space of locally constant functions.

We require three curvature tensors naturally associated to a pseudohermitian manifold $(M^{2n+1},T^{1,0},\theta)$, all of which appear as components of the CR tractor curvature~\cite{GoverGraham2005}.

The first curvature tensor we need is the \emph{Chern tensor}
\[ S_{\alpha\bar\beta\gamma\bar\sigma} := R_{\alpha\bar\beta\gamma\bar\sigma} - P_{\alpha\bar\beta}h_{\gamma\bar\sigma} - P_{\alpha\bar\sigma}h_{\gamma\bar\beta} - P_{\gamma\bar\beta}h_{\alpha\bar\sigma} - P_{\gamma\bar\sigma}h_{\alpha\bar\beta}, \]
where $P_{\alpha\bar\beta}:=\frac{1}{n+2}(R_{\alpha\bar\beta}-Ph_{\alpha\bar\beta})$ is the \emph{CR Schouten tensor} and $P:=\frac{1}{2(n+1)}R$ is its trace.  The relevance of the Chern tensor to CR geometry is that if $n\geq2$, then $S_{\alpha\bar\beta\gamma\bar\sigma}=0$ if and only if $(M^{2n+1},T^{1,0})$ is locally CR equivalent to the round CR $(2n+1)$-sphere.  Importantly, the Chern tensor is symmetric and trace-free:
\begin{align*}
 S_{\alpha\bar\beta\gamma\bar\sigma} & = S_{\alpha\bar\sigma\gamma\bar\beta} = S_{\gamma\bar\beta\alpha\bar\sigma} , \\
 S_{\alpha\bar\beta\gamma}{}^\gamma & = 0 .
\end{align*}

The second curvature tensor we need is
\[ V_{\alpha\bar\beta\gamma} := \nabla_{\bar\beta}A_{\alpha\gamma} + i\nabla_\gamma P_{\alpha\bar\beta} - iT_\gamma h_{\alpha\bar\beta} - 2iT_\alpha h_{\gamma\bar\beta} , \]
where $T_\alpha:=\frac{1}{n+2}(\nabla_\alpha P - i\nabla^\gamma A_{\alpha\gamma})$.  This tensor is a divergence of the Chern tensor:
\begin{equation}
 \label{eqn:divS}
 \nabla^{\bar\sigma}S_{\alpha\bar\beta\gamma\bar\sigma} = -niV_{\alpha\bar\beta\gamma} ;
\end{equation}
see~\cite[Lemma~2.2]{CaseGover2013}.  Importantly, $V_{\alpha\bar\beta\gamma}$ is symmetric and trace-free:
\begin{align*}
 V_{\alpha\bar\beta\gamma} & = V_{\gamma\bar\beta\alpha} , \\
 V_\alpha{}^\gamma{}_\gamma & = 0 . 
\end{align*}

The third curvature tensor we need is
\[ U_{\alpha\bar\beta} := \nabla_{\bar\beta}T_\alpha + \nabla_\alpha T_{\bar\beta} + P_\alpha{}^\rho P_{\rho\bar\beta} - A_{\alpha\rho}A^\rho{}_{\bar\beta} + Sh_{\alpha\bar\beta} , \]
where $S\in C^\infty(M;\bC)$ is such that $U_\gamma{}^\gamma=0$.  This tensor is closely related to a divergence of $V_{\alpha\bar\beta\gamma}$:
\begin{equation}
 \label{eqn:divV}
 \nabla^\gamma V_{\alpha\bar\beta\gamma} = niU_{\alpha\bar\beta} - iS_{\alpha\bar\beta\gamma\bar\sigma}P^{\gamma\bar\sigma} ;
\end{equation}
see~\cite[Lemma~2.2]{CaseGover2013}.

In addition to the well-known CR invariance of the Chern tensor, we need to know how the tensors $V_{\alpha\bar\beta\gamma}$ and $U_{\alpha\bar\beta}$ transform under change of contact form.  To that end, given a natural pseudohermitian tensor $B$ on $(M^{2n+1},T^{1,0},\theta)$ which is homogeneous of degree $k$ in $\theta$ --- that is, $B_{c\theta}=c^kB_\theta$ for all constants $c>0$ --- define the \emph{conformal linearization $D_\theta B$ of $B$ at $\theta$} by
\[ D_\theta B(\Upsilon) := \left.\frac{\partial}{\partial t}\right|_{t=0} e^{-kt\Upsilon}B_{e^{t\Upsilon}\theta} \]
for all $\Upsilon\in C^\infty(M)$.  It is clear that $D_\theta B(1)=0$.  One easily checks that $D_\theta$ extends to a derivation on the space of natural homogeneous pseudohermitian tensors.  By a simple integration argument (cf.\ \cite{Branson1985}), the tensor $B$ is a \emph{local CR invariant of weight $k$} --- that is,
\[ e^{-k\Upsilon}B_{\htheta} = B_\theta \]
for all contact forms $\theta$ and $\htheta=e^\Upsilon\theta$ --- if and only if $D_\theta B\equiv0$.

The following lemma collects the well-known~\cite{GoverGraham2005,Lee1988} conformal linearizations of the CR Schouten tensor, the Chern tensor, and the Tanaka--Webster connection, as well as the needed conformal linearizations of $V_{\alpha\bar\beta\gamma}$ and $U_{\alpha\bar\beta}$.  Note that these conformal linearizations can also be deduced from the CR invariance of the curvature of the CR tractor connection~\cite{GoverGraham2005}.

\begin{lemma}
 \label{lem:variations}
 Let $(M^{2n+1},T^{1,0},\theta)$ be a pseudohermitian manifold and let $\Upsilon\in C^\infty(M)$.  Then
 \begin{align*}
  D_\theta P_{\alpha\bar\beta}(\Upsilon) & = -\frac{1}{2}\left(\Upsilon_{\alpha\bar\beta} + \Upsilon_{\bar\beta\alpha}\right) , \\
  D_\theta S_{\alpha\bar\beta\gamma\bar\sigma}(\Upsilon) & = 0 , \\
  D_\theta V_{\alpha\bar\beta\gamma}(\Upsilon) & = iS_{\alpha\bar\beta\gamma}{}^\rho \Upsilon_\rho , \\
  D_\theta U_{\alpha\bar\beta}(\Upsilon) & = iV_{\bar\sigma\alpha\bar\beta}\Upsilon^{\bar\sigma} - iV_{\alpha\bar\beta\gamma}\Upsilon^\gamma .
 \end{align*}
 If $f$ is a local scalar CR invariant of weight $k$, then
 \[ D_\theta \nabla_\alpha f(\Upsilon) = kf\Upsilon_\alpha . \]
 If $\omega_\alpha$ is a natural pseudohermitian $(1,0)$-form which is homogeneous of degree $k$ in $\theta$, then
 \begin{align*}
  D_\theta \nabla_\gamma\omega_\alpha(\Upsilon) & = (k-1)\omega_\alpha\Upsilon_\gamma - \Upsilon_\alpha\omega_\gamma + \nabla_\gamma\left(D_\theta\omega_\alpha(\Upsilon)\right) , \\
  D_\theta \nabla_{\bar\beta}\omega_\alpha & = k\omega_\alpha\Upsilon_{\bar\beta} + \Upsilon^\gamma\omega_\gamma h_{\alpha\bar\beta} + \nabla_{\bar\beta}\left(D_\theta\omega_\alpha(\Upsilon)\right) .
 \end{align*}
\end{lemma}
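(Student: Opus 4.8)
The plan is to derive everything from the conformal transformation of the Tanaka--Webster connection. I would first record the classical coframe and connection transformation under $\htheta = e^\Upsilon\theta$ following Lee~\cite{Lee1988}: with the choice $\htheta^\alpha = \theta^\alpha + i\Upsilon^\alpha\theta$ one has $\hat h_{\alpha\bar\beta} = e^\Upsilon h_{\alpha\bar\beta}$ and an explicit formula for $\hat\omega_\alpha{}^\beta$ in terms of $\omega_\alpha{}^\beta$ and the first derivatives of $\Upsilon$. Linearizing at $t=0$ along $e^{t\Upsilon}\theta$ produces $D_\theta$ of the connection one-forms. Because the weight-normalized derivation $D_\theta$ kills $h_{\alpha\bar\beta}$ (it is homogeneous of degree one, so $D_\theta h_{\alpha\bar\beta}=0$), raising and lowering indices commutes with $D_\theta$ once the weight $k$ is tracked correctly. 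Expanding the covariant derivative of a weight-$k$ $(1,0)$-form and collecting the linear-in-$\Upsilon$ terms then gives the two Leibniz identities for $D_\theta\nabla_\gamma\omega_\alpha$ and $D_\theta\nabla_{\bar\beta}\omega_\alpha$, with the scalar identity $D_\theta\nabla_\alpha f(\Upsilon)=kf\Upsilon_\alpha$ as the connection-free special case. The linearization of the Schouten tensor and the CR invariance $D_\theta S_{\alpha\bar\beta\gamma\bar\sigma}(\Upsilon)=0$ of the Chern tensor are standard, and I would cite~\cite{Lee1988,GoverGraham2005}.

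The decisive simplification is to obtain $D_\theta V$ and $D_\theta U$ from the divergence identities~\cref{eqn:divS,eqn:divV} rather than from the definitions of $V_{\alpha\bar\beta\gamma}$ and $U_{\alpha\bar\beta}$, thereby avoiding the cumbersome transformations of $A_{\alpha\beta}$, $T_\alpha$, and the Reeb derivatives they contain. For $V$, I would apply $D_\theta$ to $\nabla^{\bar\sigma}S_{\alpha\bar\beta\gamma\bar\sigma}=-niV_{\alpha\bar\beta\gamma}$, viewed as the $(1,0)$-derivative $\nabla_\rho$ of $S$ contracted with $h^{\rho\bar\sigma}$. Since $D_\theta S=0$ and $D_\theta h=0$, the term $\nabla^{\bar\sigma}(D_\theta S)$ drops out and only the algebraic connection corrections from the tensorial extension of the first Leibniz formula survive; these are of the form (constant)$\,\cdot S\cdot\Upsilon_\bullet$, and the bookkeeping should collapse to $-ni\cdot iS_{\alpha\bar\beta\gamma}{}^\rho\Upsilon_\rho$, giving $D_\theta V_{\alpha\bar\beta\gamma}(\Upsilon)=iS_{\alpha\bar\beta\gamma}{}^\rho\Upsilon_\rho$ after dividing by $-ni$; here the trace-freeness $S_{\alpha\bar\beta\gamma}{}^\gamma=0$ is what removes the unwanted terms.

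For $U$, I would differentiate $\nabla^\gamma V_{\alpha\bar\beta\gamma}=niU_{\alpha\bar\beta}-iS_{\alpha\bar\beta\gamma\bar\sigma}P^{\gamma\bar\sigma}$ and solve for $D_\theta U_{\alpha\bar\beta}$. On the left, $D_\theta(\nabla^\gamma V_{\alpha\bar\beta\gamma})$ is computed from the already-established $D_\theta V=iS_{\alpha\bar\beta\gamma}{}^\rho\Upsilon_\rho$ through the Leibniz rule, producing a Hessian term $iS_{\alpha\bar\beta\gamma}{}^\rho\Upsilon_\rho{}^\gamma$, a term $i(\nabla^\gamma S_{\alpha\bar\beta\gamma}{}^\rho)\Upsilon_\rho$, and algebraic corrections; on the right, $D_\theta S=0$ leaves $-iS_{\alpha\bar\beta\gamma\bar\sigma}D_\theta P^{\gamma\bar\sigma}$ with $D_\theta P_{\gamma\bar\sigma}=-\frac12(\Upsilon_{\gamma\bar\sigma}+\Upsilon_{\bar\sigma\gamma})$. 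The key cancellation is that the $\Upsilon$-Hessian terms contracted against $S$ from the two sides match and drop out, up to commutators that reduce to first-order terms by the CR commutation identities, while the $(\nabla S)\cdot\Upsilon_\bullet$ terms reorganize via~\cref{eqn:divS} into $V\cdot\Upsilon_\bullet$, yielding the claimed $D_\theta U_{\alpha\bar\beta}(\Upsilon)=iV_{\bar\sigma\alpha\bar\beta}\Upsilon^{\bar\sigma}-iV_{\alpha\bar\beta\gamma}\Upsilon^\gamma$.

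The main obstacle I anticipate is the careful tracking of the connection corrections and the Hessian cancellation in the $U$ computation: one must account for every raised or differentiated index contributing a Leibniz term, keep the symmetry and trace-free properties of $S$ and $V$ active at each stage to collapse the second-order terms, and verify that the residual first-order terms reassemble exactly into the divergences of the Chern tensor furnished by~\cref{eqn:divS}. The computation is routine given the Leibniz rules but error-prone, so organizing it to minimize index manipulation is where the real effort lies.
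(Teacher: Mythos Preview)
Your proposal is correct and follows essentially the same approach as the paper: the transformation rules for $P_{\alpha\bar\beta}$, $S_{\alpha\bar\beta\gamma\bar\sigma}$, and covariant derivatives are taken from (or derived following) the standard references~\cite{Lee1988,GoverGraham2005}, and the formulae for $D_\theta V_{\alpha\bar\beta\gamma}$ and $D_\theta U_{\alpha\bar\beta}$ are obtained by conformally linearizing both sides of the divergence identities~\cref{eqn:divS,eqn:divV}. The paper's proof is terse where yours is explicit, but the strategy is the same.
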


\begin{proof}
 All but the formulae for $D_\theta V_{\alpha\bar\beta\gamma}$ and $D_\theta U_{\alpha\bar\beta}$ follow from~\cite[Proposition~2.3, Equation~(2.7), Equation~(2.8)]{GoverGraham2005}.  Computing the conformal linearization of both sides of \cref{eqn:divS,eqn:divV} yields the claimed formulae for $D_\theta V_{\alpha\bar\beta\gamma}$ and $D_\theta U_{\alpha\bar\beta}$, respectively.
\end{proof}

The following consequences of the Bianchi identities are useful in studying $X_\alpha^\Phi$ and related objects.

\begin{lemma}
 \label{lem:bianchi}
 Let $(M^{2n+1},T^{1,0},\theta)$ be a pseudohermitian manifold.  Then
 \begin{align}
  \label{eqn:bianchiS} \nabla_{[\alpha} S_{\beta]}{}^\rho{}_\gamma{}^\sigma & = iV_\gamma{}^\rho{}_{[\alpha}\delta_{\beta]}^\sigma + iV_\gamma{}^{\sigma}{}_{[\alpha}\delta_{\beta]}^\rho , \\
  \label{eqn:bianchiV} \nabla_{[\alpha} V_{\beta]}{}^\gamma{}_\rho & = -S_\rho{}^\gamma{}_{[\alpha}{}^\sigma A_{\beta]\sigma} + iQ_{\rho[\alpha}\delta_{\beta]}^\gamma , \\
  \label{eqn:S0} \nabla_0S_{\alpha\bar\beta\gamma\bar\sigma} & = \nabla_{\bar\sigma}V_{\alpha\bar\beta\gamma} + \nabla_\gamma V_{\bar\beta\alpha\bar\sigma} - iS_{\gamma\bar\sigma\alpha}{}^\rho P_{\rho\bar\beta} - iS_{\gamma\bar\sigma\rho\bar\beta}P_\alpha{}^\rho \\
  \notag & \qquad + iU_{\alpha\bar\sigma}h_{\gamma\bar\beta} - iU_{\gamma\bar\beta}h_{\alpha\bar\sigma} .
 \end{align}
 where $T_{[\alpha\gamma]}:=\frac{1}{2}(T_{\alpha\gamma}-T_{\gamma\alpha})$ and $Q_{\alpha\gamma}:=i\nabla_0A_{\alpha\gamma} - 2i\nabla_\gamma T_\alpha + 2P_\alpha{}^\rho A_{\rho\gamma}$.
\end{lemma}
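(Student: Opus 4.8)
The plan is to obtain all three identities from the Tanaka--Webster second Bianchi identity together with the covariant-derivative commutation formulas, by substituting the definitions of the Chern tensor $S_{\alpha\bar\beta\gamma\bar\sigma}$, the tensor $V_{\alpha\bar\beta\gamma}$, and the tensor $U_{\alpha\bar\beta}$ recorded above. Differentiating \cref{eqn:Pi} and cancelling the cubic connection terms yields $d\Pi_\alpha{}^\beta = \omega_\alpha{}^\gamma\wedge\Pi_\gamma{}^\beta - \Pi_\alpha{}^\gamma\wedge\omega_\gamma{}^\beta$; expanding both sides in an admissible coframe $\{\theta^\alpha,\theta^{\bar\beta},\theta\}$ and collecting terms by their coframe type produces the full set of pseudohermitian Bianchi identities of Lee~\cite{Lee1988}, which express the curls and divergences of $R_{\alpha\bar\beta\gamma\bar\sigma}$ in terms of derivatives of $R_{\alpha\bar\beta}$, $P_{\alpha\bar\beta}$, $A_{\alpha\beta}$, and the Reeb derivative. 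I would treat these as the common starting point.

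For \cref{eqn:bianchiS}, I would extract the $\theta^\alpha\wedge\theta^\beta$-type component of the second Bianchi identity, which controls the holomorphic curl $\nabla_{[\alpha}R_{\beta]}{}^\rho{}_\gamma{}^\sigma$. Substituting the definition of $S_{\alpha\bar\beta\gamma\bar\sigma}$ in terms of $R_{\alpha\bar\beta\gamma\bar\sigma}$ and $P_{\alpha\bar\beta}$, the Schouten corrections reorganize into precisely the quantities appearing in the definition of $V_{\alpha\bar\beta\gamma}$; since $V$ is, up to a constant, the divergence $\nabla^{\bar\sigma}S_{\alpha\bar\beta\gamma\bar\sigma}$ by \cref{eqn:divS}, the symmetry $S_{\alpha\bar\beta\gamma\bar\sigma}=S_{\gamma\bar\beta\alpha\bar\sigma}$ forces the curl to be expressible through $V$, giving the right-hand side of \cref{eqn:bianchiS}. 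The main bookkeeping here is confirming the coefficients of the two $\delta$-terms and the placement of the raised indices.

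For \cref{eqn:bianchiV}, I would exploit \cref{eqn:divS} to rewrite $niV_{\alpha\bar\beta\gamma}=-\nabla^{\bar\sigma}S_{\alpha\bar\beta\gamma\bar\sigma}$, so that the curl $\nabla_{[\alpha}V_{\beta]}{}^\gamma{}_\rho$ becomes an antisymmetrized mixed second derivative of $S$. Commuting the $(1,0)$-derivative past the $(0,1)$-derivative introduces curvature (hence $S$ plus Schouten terms), torsion $A_{\alpha\beta}$, and Reeb-derivative terms via the standard commutation formulas, after which \cref{eqn:bianchiS} disposes of the remaining $\nabla_{[\alpha}S_{\beta]}$ piece. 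The torsion and $\nabla_0$ contributions assemble into the combination $Q_{\rho\alpha}=i\nabla_0A_{\rho\alpha}-2i\nabla_\alpha T_\rho+2P_\rho{}^\sigma A_{\sigma\alpha}$ of the statement --- indeed this is the reason $Q$ is defined as it is --- while the curvature contribution produces the term $S_\rho{}^\gamma{}_{[\alpha}{}^\sigma A_{\beta]\sigma}$.

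Finally, \cref{eqn:S0} is the $\theta$-component of the second Bianchi identity, and I expect it to be the main obstacle. Here I would apply the commutator $[\nabla_\gamma,\nabla_{\bar\sigma}]$ --- which, unlike the purely holomorphic case, produces a term proportional to $h_{\gamma\bar\sigma}\nabla_0$ --- to the curvature, then substitute \cref{eqn:divS,eqn:divV} to convert the resulting divergences of $S$ into $V$ and $U$. The delicate step is tracking all the trace terms weighted by $h_{\gamma\bar\beta}$ and $h_{\alpha\bar\sigma}$ together with the $P$-contractions, and verifying that they collapse to exactly $iU_{\alpha\bar\sigma}h_{\gamma\bar\beta}-iU_{\gamma\bar\beta}h_{\alpha\bar\sigma}$ and the two $P\cdot S$ terms with the stated signs; the definition of $U_{\alpha\bar\beta}$, with its trace-adjusting scalar, is precisely engineered so that this cancellation occurs. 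As a more conceptual alternative, since $S$, $V$, and $U$ all arise as components of the CR tractor curvature~\cite{GoverGraham2005}, each of \cref{eqn:bianchiS,eqn:bianchiV,eqn:S0} can instead be read off from the decomposition of the tractor Bianchi identity into its weighted components, avoiding much of the index bookkeeping at the cost of setting up the tractor machinery.
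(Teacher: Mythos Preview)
Your plan is correct and follows essentially the same route as the paper: the paper simply cites the specific component Bianchi identities from Lee~\cite{Lee1988} (Equations~(2.7), (2.9), (2.14), and~(2.8), respectively) and substitutes the definitions of $S$, $V$, $U$, which is exactly the program you outline. The only minor deviation is your proposed route to \cref{eqn:bianchiV} via \cref{eqn:divS} and a commutator, which is more circuitous than applying Lee's (2.9) and (2.14) directly to the definition of $V_{\alpha\bar\beta\gamma}$; either works, but the latter avoids the extra commutation bookkeeping.
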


\begin{proof}
 \cref{eqn:bianchiS} follows from~\cite[Equation~(2.7)]{Lee1988}.  \cref{eqn:bianchiV} follows from~\cite[Equations~(2.9) and~(2.14)]{Lee1988}.  \cref{eqn:S0} follows from~\cite[Equation~(2.8)]{Lee1988}.
\end{proof}

\subsection{Sasakian manifolds}

We recall some facts about Sasakian manifolds;
see~\cite{BoyerGalicki2008} for a comprehensive introduction.
A \emph{Sasakian manifold}
is a pseudohermitian manifold $(M, T^{1,0}, \theta)$
with pseudohermitian torsion identically zero,
or equivalently,
the Reeb vector field $T$ preserves the CR structure $T^{1, 0}$.

A typical example of a Sasakian manifold
is the circle bundle associated with a negative holomorphic line bundle.
Let $Y$ be an $n$-dimensional complex manifold
and $(L, h)$ a Hermitian holomorphic line bundle over $Y$
such that $\omega = - i \Theta_{h} = 2^{- 1} d d^{c} \log h$
defines a K\"{a}hler metric on $Y$,
where $d^{c} = i(\overline{\partial} - \partial)$.
Now we consider the circle bundle
\begin{equation*}
	M := \left\{ v \in L \suchthat h(v, v) = 1 \right\},
\end{equation*}
which is a real hypersurface in $L$.
The one-form $\theta := 2^{- 1} d^{c} \log h |_{M}$
is a connection one-form of the principal $S^{1}$-bundle $p \colon M \to Y$
and satisfies $d \theta = p^{\ast} \omega$.
Moreover,
the natural CR structure $T^{1, 0}$ on $M$
coincides with the horizontal lift of the holomorphic tangent bundle $T^{1, 0} Y$ of $Y$ with respect to $\theta$.
Since $\omega$ defines a K\"{a}hler metric,
we have
\begin{equation*}
	- i d \theta (Z, \overline{Z})
	= - i \omega(p_{\ast} Z, p_{\ast} \overline{Z})
	> 0
\end{equation*}
for all nonzero $Z \in T^{1, 0}$.
Hence $(M, T^{1, 0}, \theta)$ is a pseudohermitian manifold of dimension $2 n + 1$.
We call this triple the \emph{circle bundle associated with} $(Y, L, h)$.
Note that the Reeb vector field $T$ with respect to $\theta$
is a generator of the $S^{1}$-action on $M$.

Next,
consider the Tanaka--Webster connection with respect to $\theta$.
Take a local coordinate $(z^{1}, \dots , z^{n})$ of $Y$.
The K\"{a}hler form $\omega$ is written as
\begin{equation*}
	\omega
	= i g_{\alpha \bar{\beta}} d z^{\alpha} \wedge d \overline{z}^{\beta},
\end{equation*}
where $(g_{\alpha \bar{\beta}})$ is a positive definite Hermitian matrix.
An admissible coframe is given by
$(\theta, \theta^{\alpha} := p^{\ast} (d z^{\alpha}),
\theta^{\bar{\alpha}} := p^{\ast} (d \overline{z}^{\alpha}))$.
Since $d \theta = p^{\ast} \omega$,
we have
\begin{equation*}
	d \theta
	= i (p^{\ast} g_{\alpha \bar{\beta}}) \theta^{\alpha} \wedge \theta^{\bar{\beta}},
\end{equation*}
which implies that $h_{\alpha \bar{\beta}} = p^{\ast} g_{\alpha \bar{\beta}}$.
The connection form $\psi_{\alpha}{}^{\beta}$ of the K\"{a}hler metric
with respect to the frame $(\partial / \partial z^{\alpha})$ satisfies
\begin{equation} \label{eq:structure-equation-for-Kahler-metric}
	0 = d (d z^{\beta}) = d z^{\alpha} \wedge \psi_{\alpha}{}^{\beta},
	\qquad
	d g_{\alpha \bar{\beta}}
	= \psi_{\alpha}{}^{\gamma} g_{\gamma \bar{\beta}}
	+ g_{\alpha \bar{\gamma}} \psi_{\bar{\beta}}{}^{\bar{\gamma}}.
\end{equation}
We write as $\Psi_{\alpha}{}^{\beta}$ the curvature form of the K\"{a}hler metric.
Pulling back \cref{eq:structure-equation-for-Kahler-metric} by $p$ gives
\begin{equation*}
	d \theta^{\beta} = \theta^{\alpha} \wedge (p^{\ast} \psi_{\alpha}{}^{\beta}),
	\qquad
	d h_{\alpha \bar{\beta}}
	= (p^{\ast} \psi_{\alpha}{}^{\gamma}) h_{\gamma \bar{\beta}}
	+ h_{\alpha \bar{\gamma}} (p^{\ast} \psi_{\bar{\beta}}{}^{\bar{\gamma}}).
\end{equation*}
This yields
$\omega_{\alpha}{}^{\beta} = p^{\ast} \psi_{\alpha}{}^{\beta}$.  In particular, the pseudohermitian torsion vanishes identically;
that is,
$(M, T^{1, 0}, \theta)$ is a Sasakian manifold.
Moreover,
the curvature form $\Pi_{\alpha}{^{\beta}}$ of the Tanaka--Webster connection
is given by $\Pi_{\alpha}{}^{\beta} = p^{\ast} \Psi_{\alpha}{}^{\beta}$.

\section{Representatives for characteristic classes}
\label{sec:chern}

In this section we give some equivalent representatives for the characteristic classes of a CR manifold.  Given an invariant polynomial $\Phi$ of degree $k$ and a matrix $Y_\alpha{}^\beta$ of two-forms, we define the \emph{characteristic form $c_\Phi(Y_\alpha{}^\beta)$} by
\[ c_\Phi(Y_\alpha{}^\beta) := \Phi_{\alpha_1\dotsm\alpha_k}^{\beta_1\dotsm\beta_k}Y_{\beta_1}{}^{\alpha_1}\dotsm Y_{\beta_k}{}^{\alpha_k} ; \]
throughout this section we multiply forms using the exterior product.  The characteristic class of $(M^{2n+1},T^{1,0})$ determined by $\Phi$ is
\[ c_\Phi(T^{1,0}) := \left[c_\Phi\left(\frac{i}{2\pi}\Pi_\alpha{}^\beta\right) \right] . \]
It is well-known $c_\Phi(T^{1,0})$ is independent of the choice of contact form.

We are interested in two other $\End(T^{1,0})$-valued two-forms on a pseudohermitian manifold $(M^{2n+1},T^{1,0},\theta)$, namely
\begin{align}
 \label{eqn:Omega} \Omega_\alpha{}^\beta & := R_\alpha{}^\beta{}_\mu{}^\nu\theta^\mu\theta_\nu -  \nabla^\beta A_{\alpha\mu}\theta\theta^\mu + \nabla_\alpha A^{\beta\nu}\theta\theta_\nu , \\
 \label{eqn:Xi} \Xi_\alpha{}^\beta & := S_\alpha{}^\beta{}_\mu{}^\nu\theta^\mu\theta_\nu - V_\alpha{}^\beta{}_\mu\theta\theta^\mu + V^\beta{}_\alpha{}^\nu\theta\theta_\nu .
\end{align}
It is known~\cite[Equations~(2.2) and~(2.4)]{Lee1988} that
\begin{equation}
 \label{eqn:Pi-to-Omega}
 \Omega_\alpha{}^\beta = \Pi_\alpha{}^\beta - i\theta_\alpha\tau^\beta + i\tau_\alpha\theta^\beta .
\end{equation}
The main results of this section are that $c_\Phi(\Omega_\alpha{}^\beta)$ is closed and the induced element in $H^{2k}(M;\bR)$ agrees with $[c_\Phi(\Pi_\alpha{}^\beta)]$, and moreover the same is true for $c_\Phi(\Xi_\alpha{}^\beta)$ on pseudo-Einstein manifolds.  This requires three observations.

Our first observation is that $[c_\Phi(\Pi_\alpha{}^\beta)]=[c_\Phi(\Omega_\alpha{}^\beta)]$.

\begin{proposition}
 \label{prop:reduce-chern-Omega}
 Let $(M^{2n+1},T^{1,0},\theta)$ be a pseudohermitian manifold and let $\Phi$ be an invariant polynomial of degree $k$.  Then $c_\Phi(\Omega_\alpha{}^\beta)$ is closed and
 \[ \left[c_\Phi(\Pi_\alpha{}^\beta)\right] = \left[c_\Phi(\Omega_\alpha{}^\beta)\right] . \]
\end{proposition}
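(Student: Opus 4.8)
The plan is to prove the two claims—closedness of $c_\Phi(\Omega_\alpha{}^\beta)$ and the cohomological identity $[c_\Phi(\Pi_\alpha{}^\beta)]=[c_\Phi(\Omega_\alpha{}^\beta)]$—by exploiting the relation \cref{eqn:Pi-to-Omega}, namely $\Omega_\alpha{}^\beta = \Pi_\alpha{}^\beta - i\theta_\alpha\tau^\beta + i\tau_\alpha\theta^\beta$, which expresses the difference $\Omega_\alpha{}^\beta - \Pi_\alpha{}^\beta$ in terms of the torsion. Since $c_\Phi(\Pi_\alpha{}^\beta)$ is the standard Chern--Weil representative and is therefore already known to be closed, the cleanest route is to interpolate linearly: set $\Theta_\alpha{}^\beta(t) := \Pi_\alpha{}^\beta + t(\Omega_\alpha{}^\beta - \Pi_\alpha{}^\beta)$ for $t\in[0,1]$, and show that $c_\Phi(\Theta(t))$ is closed for every $t$ and that $\frac{d}{dt}c_\Phi(\Theta(t))$ is exact, with the exact primitive given by the usual Chern--Simons transgression form.

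First I would recall the algebraic setup for transgression. For an invariant polynomial $\Phi$ of degree $k$ and any two matrices of even-degree forms $Y_\alpha{}^\beta$ and $Z_\alpha{}^\beta$, the invariance of $\Phi$ under the symmetric group gives a polarized identity
\begin{equation*}
 d\,c_\Phi\bigl(Y,\dotsc,Y;\eta\bigr) = k\,c_\Phi\bigl(Y,\dotsc,Y;DZ\bigr) - \text{(curvature corrections)},
\end{equation*}
so that, writing $\eta_\alpha{}^\beta := \Omega_\alpha{}^\beta - \Pi_\alpha{}^\beta = -i\theta_\alpha\tau^\beta + i\tau_\alpha\theta^\beta$ for the interpolating direction, one obtains the standard transgression formula
\begin{equation*}
 c_\Phi(\Omega) - c_\Phi(\Pi) = d\left(k\int_0^1 c_\Phi\bigl(\eta,\Theta(t),\dotsc,\Theta(t)\bigr)\,dt\right).
\end{equation*}
Here $c_\Phi(\eta,\Theta,\dotsc,\Theta)$ denotes the full polarization of $\Phi$ with one slot filled by $\eta$ and the remaining $k-1$ slots by $\Theta(t)$. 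The key technical point is that each $\Theta(t)$ is the curvature of the connection $\omega_\alpha{}^\beta + t\,\beta_\alpha{}^\beta$ for an appropriate $\End(T^{1,0})$-valued one-form $\beta$ whose curvature contribution is exactly $\eta$; concretely, \cref{eqn:Pi-to-Omega} shows $\eta$ has the shape of a commutator/covariant-derivative term built from $\tau^\alpha=A^\alpha{}_{\bar\beta}\theta^{\bar\beta}$, so the Bianchi identity $D\Pi=0$ (and more generally $D\Theta(t)=0$ along the path) drives the transgression.

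I would then verify closedness of $c_\Phi(\Omega)$ directly as a byproduct: since $c_\Phi(\Pi)$ is closed and the difference is exact, $c_\Phi(\Omega)$ is closed and represents the same class, giving both assertions at once. The main obstacle I anticipate is bookkeeping the torsion terms: the one-form $\eta_\alpha{}^\beta$ involves $\theta$ wedged with $\tau$, and one must check that the transgression integrand $c_\Phi(\eta,\Theta,\dotsc,\Theta)$ is a genuine $(2k-1)$-form on $M^{2n+1}$ and that no hidden boundary or degree obstruction appears—in particular that the trace structure of $\Phi$ together with \cref{eqn:tautheta} (so that $\theta^\gamma\wedge\tau_\gamma=0$) prevents the transgression form from being ill-defined. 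The actual Chern--Weil computation is routine once the path and polarization are fixed; the care lies entirely in confirming that $\eta$ is the curvature variation of an honest connection deformation so that the transgression identity applies verbatim.
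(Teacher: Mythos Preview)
Your proposal has a genuine gap at precisely the point you flag as ``the key technical point.'' The standard Chern--Weil transgression formula
\[
 c_\Phi(F_1)-c_\Phi(F_0)=d\!\left(k\int_0^1 c_\Phi\bigl(\alpha,F_t,\dotsc,F_t\bigr)\,dt\right)
\]
is valid when $F_t$ is the curvature of a path of connections $\omega_t=\omega_0+t\alpha$ with $\alpha$ an $\End(T^{1,0})$-valued \emph{one}-form; the proof uses the Bianchi identity $D_tF_t=0$ in an essential way to convert $k\,c_\Phi(D_t\alpha,F_t,\dotsc,F_t)$ into an exact form. In your setup, $\eta_\alpha{}^\beta=\Omega_\alpha{}^\beta-\Pi_\alpha{}^\beta=-i\theta_\alpha\tau^\beta+i\tau_\alpha\theta^\beta$ is a two-form, and you never produce a one-form $\beta_\alpha{}^\beta$ with curvature of $\omega+\beta$ equal to $\Omega$. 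Without this, the linear interpolation $\Theta(t)=\Pi+t\eta$ is not a curvature path, there is no Bianchi identity available for $\Theta(t)$, and the transgression identity simply does not follow. Your remark that ``$\eta$ has the shape of a commutator/covariant-derivative term'' is not a proof that such a $\beta$ exists, and in fact there is no reason to expect $\Omega_\alpha{}^\beta$ to be the curvature of any connection on $T^{1,0}$: it was defined ad hoc by discarding certain components of $\Pi_\alpha{}^\beta$.

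The paper does not attempt to realize $\Omega$ as a curvature. Instead it expands $T_k(\Pi)=T_k(\Omega+\Theta)$ directly (with $\Theta_\alpha{}^\beta:=i\theta_\alpha\tau^\beta-i\tau_\alpha\theta^\beta$), groups the cross terms by their degree in $\Theta$, and uses concrete identities such as $\Omega_\alpha{}^\gamma\theta_\gamma=\theta(d\tau_\alpha-\omega_\alpha{}^\gamma\tau_\gamma)$ and $\theta^\gamma\Omega_\gamma{}^\beta=-\theta(d\tau^\beta-\tau^\gamma\omega_\gamma{}^\beta)$, together with explicit formulae for $(\Theta^s)_\alpha{}^\beta$, to write every cross term as an exact form. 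This is where the actual work lies, and it is not bypassed by invoking transgression abstractly. If you want to salvage your approach, you must either exhibit a genuine connection deformation with endpoint curvature $\Omega$, or prove by hand a Bianchi-type identity for $\Theta(t)$ sufficient to push the transgression through---either of which amounts to a computation comparable to the paper's.
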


\begin{proof}
 Denote
 \[ T_k(\Pi_\alpha{}^\beta) := \tr \Pi^k := \Pi_{\gamma_1}{}^{\gamma_2}\Pi_{\gamma_2}{}^{\gamma_3}\dotsm \Pi_{\gamma_k}{}^{\gamma_1} ; \]
 note that $T_k(\Pi_\alpha{}^\beta)=k!\chern_k(\Pi_\alpha{}^\beta)$ is proportional to the $k$-th Chern character form.  Since $\{T_k\}_{k=1}^\infty$ generates the algebra of invariant polynomials, it suffices to prove the result for all $T_k$.
 
 Denote
 \[ \Theta_\alpha{}^\beta := i\theta_\alpha\tau^\beta - i\tau_\alpha\theta^\beta , \]
 so that $\Pi_\alpha{}^\beta = \Omega_\alpha{}^\beta + \Theta_\alpha{}^\beta$.  Denote $(\Theta^s)_\alpha{}^\beta:=\Theta_\alpha{}^{\gamma_2}\Theta_{\gamma_2}{}^{\gamma_3}\dotsm\Theta_{\gamma_{s}}{}^\beta$.  We compute that
 \begin{align}
  \label{eqn:Theta_odd} (\Theta^{2s+1})_\alpha{}^\beta & = (-i\tau^\gamma\tau_\gamma d\theta)^s\Theta_\alpha{}^\beta, \\
  \label{eqn:Theta_even} (\Theta^{2s})_\alpha{}^\beta & = (-i\tau^\gamma\tau_\gamma d\theta)^{s-1}(\tau^\rho\tau_\rho\theta_\alpha\theta^\beta - i\tau_\alpha\tau^\beta d\theta)
 \end{align}
 for all $s\in\bN$.
 A direct computation using \cref{eqn:Pi} and the definition of $\omega_\alpha{}^\beta$ yields
 \begin{align*}
  d\Pi_\alpha{}^\beta & = \omega_\alpha{}^\gamma\Pi_\gamma{}^\beta - \Pi_\alpha{}^\gamma\omega_\gamma{}^\beta, \\
  \Pi_\alpha{}^\gamma\theta_\gamma & = -d(\theta\tau_\alpha) - \theta\omega_\alpha{}^\gamma\tau_\gamma, \\
  \theta^\gamma\Pi_\gamma{}^\beta & = d(\theta\tau^\beta) + \theta\tau^\gamma\omega_\gamma{}^\beta .
 \end{align*}
 It follows from these equations that
 \begin{align*}
  d\Omega_\alpha{}^\beta & = \omega_\alpha{}^\gamma\Omega_\gamma{}^\beta - \Omega_\alpha{}^\gamma\omega_\gamma{}^\beta + i\theta_\alpha(d\tau^\beta-\tau^\gamma\omega_\gamma{}^\beta) + i(d\tau_\alpha-\omega_\alpha{}^\gamma\tau_\gamma)\theta^\beta, \\
  \Omega_\alpha{}^\gamma\theta_\gamma & = \theta(d\tau_\alpha - \omega_\alpha{}^\gamma\tau_\gamma), \\
  \theta^\gamma\Omega_\gamma{}^\beta & = -\theta(d\tau^\beta-\tau^\gamma\omega_\gamma{}^\beta) .
 \end{align*}
 We deduce that
 \begin{align}
  \label{eqn:dtOmegat} d\left(\tau^\gamma(\Omega^s)_\gamma{}^\beta\tau_\beta\right) & = (d\tau^\gamma - \tau^\rho\omega_\rho{}^\gamma)(\Omega^s)_\gamma{}^\beta\tau_\beta - \tau^\gamma(\Omega^s)_\gamma{}^\beta(d\tau_\beta - \omega_\beta{}^\rho\tau_\rho), \\
  \label{eqn:OmegaTOmega} \Omega_\alpha{}^\beta\Theta_\beta{}^\gamma\Omega_\gamma{}^\rho & = i\theta\left((d\tau_\alpha-\omega_\alpha{}^\beta\tau_\beta)\tau^\gamma\Omega_\gamma{}^\rho - \Omega_\alpha{}^\beta\tau_\beta(d\tau^\rho-\tau^\gamma\omega_\gamma{}^\rho)\right), \\
  \label{eqn:OmegaTTOmega} \Omega_\alpha{}^\beta(\Theta^2)_\beta{}^\gamma\Omega_\gamma{}^\rho & = -i\Omega_\alpha{}^\beta\tau_\beta\tau^\gamma\Omega_\gamma{}^\rho d\theta
 \end{align}
 for all integers $s\geq0$.

 Given $s\in\bN$ and $N\in\bN$, define $\mO_{s,N}\in\Lambda^{2N+4s-2}T^\ast M$ by
 \[ \mO_{s,N} := \sum_{\substack{j_1,\dotsc,j_s\geq1\\j_1+\dotsm+j_s=N}} \Theta_{\gamma_s}{}^{\beta_1}(\Omega^{j_1})_{\beta_1}{}^{\gamma_1}(\Theta^2)_{\gamma_1}{}^{\beta_2}(\Omega^{j_2})_{\beta_2}{}^{\gamma_2} \dotsm (\Theta^2)_{\gamma_{s-1}}{}^{\beta_s}(\Omega^{j_s})_{\beta_s}{}^{\gamma_s} , \]
 with the convention $\mO_{s,N}=0$ if $N<s$.  It follows from \cref{eqn:dtOmegat,eqn:OmegaTOmega,eqn:OmegaTTOmega} that
 \begin{equation}
  \label{eqn:mO}
  \mO_{s,N} = -(-i)^s\theta\,d\theta^{s-1} \sum_{\substack{j_1,\dotsc,j_s\geq1\\j_1+\dotsm+j_s=N}} \Psi^{(j_1)}\dotsm\Psi^{(j_{s-1})} d\Psi^{(-1+j_s)},
 \end{equation}
 where
 \[ \Psi^{(j)} := \tau^\alpha(\Omega^j)_\alpha{}^\beta \tau_\beta . \]
 
 Given $s\in\bN$ and $N\in\bN$, define $\mE_{s,N}\in\Lambda^{2N+4s}T^\ast M$ by
 \[ \mE_{s,N} := \sum_{\substack{j_1,\dotsc,j_s\geq1\\j_1+\dotsm+j_s=N}} (\Theta^2)_{\gamma_s}{}^{\beta_1}(\Omega^{j_1})_{\beta_1}{}^{\gamma_1}\dotsm(\Theta^2)_{\gamma_{s-1}}{}^{\beta_s}(\Omega^{j_s})_{\beta_s}{}^{\gamma_s} , \]
 with the convention $\mE_{s,N}=0$ if $N<s$.  It follows from \cref{eqn:OmegaTTOmega} that
 \begin{equation}
  \label{eqn:mE}
  \mE_{s,N} = -(-i)^sd\theta^s\sum_{\substack{j_1,\dotsc,j_s\geq1\\j_1+\dotsm+j_s=N}} \Psi^{(j_1)}\dotsm\Psi^{(j_s)} .
 \end{equation}
 Combining \cref{eqn:mO,eqn:mE} yields
 \begin{multline}
  \label{eqn:mO-to-mE}
  \mO_{s,N} = -\frac{1}{s}\mE_{s,N-1} - i\theta\, d(\tau^\gamma\tau_\gamma)\mE_{s-1,N-1} \\ + \frac{(-i)^s}{s}d\Biggl(\sum_{\substack{j_1,\dotsc,j_s\geq1\\j_1+\dotsm+j_s=N-1}} \Psi^{(j_1)}\dotsm \Psi^{(j_s)}\theta\,d\theta^{s-1}\Biggr) ,
 \end{multline}
 with the convention $\mE_{0,0}=-1$ and $\mE_{0,N}=0$ if $N\geq1$.

 Now consider $T_k(\Pi_\alpha{}^\beta)=T_k(\Omega_\alpha{}^\beta+\Theta_\alpha{}^\beta)$.  Write
 \[ T_k(\Pi_\alpha{}^\beta) = \sum_{s=0}^k f_s, \]
 where $f_s$ is the term obtained by expanding $T_k(\Omega_\alpha{}^\beta+\Theta_\alpha{}^\beta)$ as a polynomial in $\Omega_\alpha{}^\beta$ and $\Theta_\alpha{}^\beta$, and keeping only those terms which are homogeneous of degree $s$ in $\Theta_\alpha{}^\beta$.
 
 First note that, for $s\geq0$ and $2s+2<k$,
 \begin{equation}
  \label{eqn:f-even}
  f_{2s+2} = k\sum_{j=1}^{s+1} \frac{1}{j}\binom{s}{j-1}(-i\tau^\gamma\tau_\gamma d\theta)^{s+1-j}\mE_{j,k-2-2s} .
 \end{equation}
 To obtain this formula, first note that \cref{eqn:OmegaTOmega,eqn:Theta_odd} imply that all products with at least two factors of odd powers $(\Theta^{2\ell+1})_\alpha{}^\beta$, $\ell\geq0$, of $\Theta_\alpha{}^\beta$ which are separated by powers of $\Omega$ must vanish; e.g.\ $\Omega_\alpha{}^\beta\Theta_\beta{}^\gamma\Omega_\gamma{}^\rho\Theta_\rho{}^\alpha=0$.  Therefore $f_{2s+2}$ can be written as a polynomial in $(\Theta^2)_\alpha{}^\beta$ and $\Omega_\alpha{}^\beta$.  Group the summands according to how many times a positive power of $(\Theta^2)_\alpha{}^\beta$ is multiplied on the left and the right by $\Omega_\alpha{}^\beta$.  Using \cref{eqn:Theta_even}, we see that the sum of all possible terms where this happens $j$ times is a multiple $c_j$ of
 \[ (-i\tau^\gamma\tau_\gamma d\theta)^{s+1-j}\mE_{j,k-2-2s} . \]
 To compute the multiple, note that in the definition of $\mE_{j,k-2-2s}$, there are $j$ positions --- corresponding to each of the factors of $(\Theta^2)_\alpha{}^\beta$ --- where the extra $s+1-j$ copies of $(\Theta^2)_\alpha{}^\beta$ can be multiplied.  There are $\binom{s}{j-1}$ ways these products appear in the expansion of $T_k(\Pi_\alpha{}^\beta)$.  Since $\mE_{j,k-2-2s}$ is symmetric in the ordering of the factors of $(\Theta^2)_{\alpha}{}^\beta$ and there are $k$ different ways to cyclically permute the terms of $\mE_{j,k-2-2s}$, we conclude that $c_j=\frac{k}{j}\binom{s}{j-1}$.  This yields \cref{eqn:f-even}.
 
 \Cref{eqn:Theta_even} implies that if $k$ is even, then
 \[ f_k = -2(-i\tau^\gamma\tau_\gamma d\theta)^{k/2} . \]
 Combining \cref{eqn:f-even} and our conventions that $\mE_{0,0}=-1$ and $\mE_{j,0}=0$ if $j\geq1$ implies that
 \begin{equation}
  \label{eqn:f-even-0}
  f_{2s+2} = k\sum_{j=0}^{s+1}\frac{1}{j}\binom{s}{j-1}(-i\tau^\gamma\tau_\gamma d\theta)^{s+1-j}\mE_{j,k-2-2s}
 \end{equation}
 for all $s\geq0$, where we recall that $\frac{1}{j}\binom{s}{j-1}=\frac{1}{s+1-j}\binom{s}{j}$ to make sense of the coefficient when $j=0$.
 
 Second note that, for $s\geq0$,
 \begin{equation}
  \label{eqn:f-odd}
  f_{2s+1} = k\sum_{j=1}^{s+1} \binom{s}{j-1}(-i\tau^\gamma\tau_\gamma d\theta)^{s+1-j}\mO_{j,k-1-2s} .
 \end{equation}
 We obtain this formula by following the same procedure as above, except that now there must be a single factor of an odd power of $\Theta_\alpha{}^\beta$, and the location of this factor specifies a preferred ordering of the terms of the expansion, up to cyclic permutation.
 
 Finally, it follows from \cref{eqn:mO-to-mE,eqn:f-odd,eqn:f-even-0} that
 \begin{align*}
  f_{2s+1}+f_{2s+2} & = k\sum_{j=1}^{s+1} \binom{s}{j-1}(-i\tau^\gamma\tau_\gamma d\theta)^{s+1-j}\Biggl[ -i\theta\, d(\tau^\gamma\tau_\gamma)\mE_{j-1,k-2-2s} \\
   & \qquad\qquad + \frac{(-i)^j}{j}d\biggl(\sum_{\substack{\ell_1,\dotsc,\ell_j\geq1\\\ell_1+\dotsm+\ell_j=k-2-2s}} \Psi^{(\ell_1)}\dotsm \Psi^{(\ell_j)} \theta\, d\theta^{j-1}\biggr) \Biggr] \\
   & = k\,d\Biggl[ \sum_{j=1}^{s+1} \frac{i}{j}\binom{s}{j-1}\theta\tau^\gamma\tau_\gamma(-i\tau^\gamma\tau_\gamma d\theta)^{s-j}\mE_{j,k-2-2s} \Biggr] .
 \end{align*}
 In particular, $f_{2s+1}+f_{2s+2}$ is exact for all integers $s\geq0$.  Adopting the convention that $f_\ell=0$ for all $\ell\geq k+1$, we may write
 \[ T_k(\Pi_\alpha{}^\beta) = f_0 + \sum_{s=0}^\infty (f_{2s+1}+f_{2s+2}) . \]
 Since $f_0=T_k(\Omega_\alpha{}^\beta)$ and $f_{2s+1}+f_{2s+2}$ is exact, we conclude that $T_k(\Omega_\alpha{}^\beta)$ is closed and $[T_k(\Pi_\alpha{}^\beta)]=[T_k(\Omega_\alpha{}^\beta)]$.
\end{proof}

Our second observation is that the form $c_\Phi(\Xi_\alpha{}^\beta)$ is always closed.

\begin{lemma}
 \label{lem:Xi-closed}
 Let $(M^{2n+1},T^{1,0},\theta)$ be a pseudohermitian manifold and let $\Phi$ be an invariant polynomial of degree $k$.  Then $c_\Phi(\Xi_\alpha{}^\beta)$ is closed.
\end{lemma}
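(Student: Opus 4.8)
The plan is to mimic the proof of \cref{prop:reduce-chern-Omega}. Since the power sums $T_k(\Xi_\alpha{}^\beta):=\tr\Xi^k=\Xi_{\gamma_1}{}^{\gamma_2}\Xi_{\gamma_2}{}^{\gamma_3}\dotsm\Xi_{\gamma_k}{}^{\gamma_1}$ generate the algebra of invariant polynomials, it suffices to show $d\,T_k(\Xi_\alpha{}^\beta)=0$ for every $k$. Because the entries of $\Xi_\alpha{}^\beta$ are two-forms, the Leibniz rule and cyclicity of the trace give $d\,T_k(\Xi_\alpha{}^\beta)=k\,\tr(\Xi^{k-1}\wedge d\Xi)$. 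Introducing the Tanaka--Webster covariant exterior derivative $D\Xi_\alpha{}^\beta:=d\Xi_\alpha{}^\beta-\omega_\alpha{}^\gamma\wedge\Xi_\gamma{}^\beta+\Xi_\alpha{}^\gamma\wedge\omega_\gamma{}^\beta$, the connection terms assemble into a graded commutator with $\omega_\alpha{}^\beta$ which drops out of $\tr(\Xi^{k-1}\wedge\,\cdot\,)$ by cyclicity. Hence $\tr(\Xi^{k-1}\wedge d\Xi)=\tr(\Xi^{k-1}\wedge D\Xi)$, and I am reduced to showing this trace vanishes.

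First I would compute $D\Xi_\alpha{}^\beta$ from \cref{eqn:Xi} using the structure equations $d\theta=i\theta^\gamma\wedge\theta_\gamma$, $D\theta^\mu=\theta\wedge\tau^\mu$, and $D\theta_\nu=\theta\wedge\tau_\nu$, together with $DS_\alpha{}^\beta{}_\mu{}^\nu=\nabla_\rho S_\alpha{}^\beta{}_\mu{}^\nu\,\theta^\rho+\nabla_{\bar\rho}S_\alpha{}^\beta{}_\mu{}^\nu\,\theta^{\bar\rho}+\nabla_0 S_\alpha{}^\beta{}_\mu{}^\nu\,\theta$ and the analogous expansion of $DV$. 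I would then organize $D\Xi$ according to whether a factor of $\theta$ is present, since a three-form containing $\theta$ cannot cancel against one that does not, so the two sectors must vanish separately.

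In the $\theta$-free sector the only contributions are the horizontal derivatives of $S$ coming from $\Xi^{(1)}:=S_\alpha{}^\beta{}_\mu{}^\nu\theta^\mu\wedge\theta_\nu$ and the terms $-iV_\alpha{}^\beta{}_\mu\,\theta^\gamma\wedge\theta_\gamma\wedge\theta^\mu$ and $iV^\beta{}_\alpha{}^\nu\,\theta^\gamma\wedge\theta_\gamma\wedge\theta_\nu$ produced by $d\theta$ acting on the $V$-parts of $\Xi$. Using the symmetry $S_\alpha{}^\beta{}_\mu{}^\nu=S_\mu{}^\beta{}_\alpha{}^\nu$, the Bianchi identity \cref{eqn:bianchiS} rewrites the antisymmetrized derivative $\nabla_{[\rho}S_{\mu]}{}^\beta{}_\alpha{}^\nu$ selected by $\theta^\rho\wedge\theta^\mu$ in terms of $V$; one of the two resulting terms equals $V_\alpha{}^\beta{}_\rho\,\theta^\rho\wedge d\theta$, which cancels the $d\theta$-term coming from the $V$-part of $\Xi$ (and similarly for the conjugate pair). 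The remaining term carries the free endomorphism index as a coframe factor $\theta^\beta$; when it is contracted into the adjacent factor $\Xi^{(1)}$ in the trace, this $\theta^\beta$ wedges against the $\theta^\mu$ from $S_\beta{}^\gamma{}_\mu{}^\nu$ and vanishes, because $S$ is symmetric in $\beta\leftrightarrow\mu$ while $\theta^\beta\wedge\theta^\mu$ is antisymmetric. Thus the $\theta$-free sector contributes nothing, uniformly in $k$.

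The remaining, and main, difficulty is the sector containing $\theta$. Here $\nabla_0 S$ enters through $\Xi^{(1)}$, and I would substitute \cref{eqn:S0} to trade it for the divergences $\nabla_{\bar\sigma}V$, $\nabla_\gamma V$ together with $S\!\cdot\!P$ and $U\!\cdot\!h$ terms. The $\nabla V$ contributions must then be matched against the horizontal derivatives of $V$ produced by the $V$-parts of $\Xi$; the Bianchi identity \cref{eqn:bianchiV} converts the antisymmetrized parts into $S\!\cdot\!A$ and $Q$ contributions, which are arranged to cancel the torsion terms $S_\alpha{}^\beta{}_\mu{}^\nu\,\theta\wedge\tau^\mu\wedge\theta_\nu$ (recall $\tau^\mu=A^\mu{}_{\bar\beta}\theta^{\bar\beta}$) generated by $D\theta^\mu$ and $D\theta_\nu$. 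The leftover $S\!\cdot\!P$ and $U\!\cdot\!h$ terms should then either cancel in pairs or, exactly as in the $\theta$-free sector, vanish once traced against $\Xi^{k-1}$ by playing the symmetry of $S$ (and the trace-free identities $U_\gamma{}^\gamma=0$ and $V_\alpha{}^\gamma{}_\gamma=0$) against the antisymmetry of the wedge. I expect the bulk of the effort, and the genuine obstacle, to be the careful bookkeeping of signs and combinatorial multiplicities in this $\theta$-sector — precisely the sort of accounting that constituted the substance of \cref{prop:reduce-chern-Omega} — while the conceptual input is entirely the three Bianchi identities of \cref{lem:bianchi} and the symmetry and trace-free properties of $S$, $V$, and $U$.
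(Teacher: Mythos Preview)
Your proposal is correct and follows essentially the same route as the paper: reduce to $\tr(\Xi^{k-1}\wedge D\Xi)=0$, compute $D\Xi$ via the Bianchi identities of \cref{lem:bianchi}, and kill the result using the symmetries of $S$, $V$, $Q$ against the antisymmetry of the wedge. The paper's version is more compressed than you anticipate --- it records the final formula for $D\Xi_\alpha{}^\beta$ in one line and then observes that every surviving term either carries a free $\theta^\beta$ or $\theta_\alpha$ (hence vanishes under contraction with the adjacent $\Xi$ by the symmetry of $S$, $V$, $Q$) or is the $S\!\cdot\!P$ commutator (hence vanishes by cyclicity of the trace); the trace-free identities you flag are not actually needed, and the bookkeeping is considerably lighter than in \cref{prop:reduce-chern-Omega}.
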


\begin{proof}
 It follows from \cref{lem:bianchi} that
 \begin{multline*}
  d\Xi_\alpha{}^\beta = -iV_\alpha{}^\nu{}_\rho\theta^\beta\theta^\rho\theta_\nu + iV^\beta{}_\mu{}^\sigma\theta_\alpha\theta^\mu\theta_\sigma - i\left(S_\mu{}^\nu{}_\alpha{}^\rho P_\rho{}^\beta - S_\mu{}^\nu{}_\rho{}^\beta P_\alpha{}^\rho\right)\theta\theta^\mu\theta_\nu \\ + iU_\alpha{}^\nu\theta\theta^\beta\theta_\nu - iU_\mu{}^\beta\theta\theta^\mu\theta_\alpha - iQ_{\alpha\gamma}\theta\theta^\beta\theta^\gamma - iQ^{\beta\gamma}\theta\theta_\alpha\theta_\gamma .
 \end{multline*}
 Using the facts that $S_{\alpha\bar\beta\gamma\bar\sigma}$, $V_{\alpha\bar\beta\gamma}$, and $Q_{\alpha\gamma}$ are all symmetric~\cite[Section~2.3]{CaseGover2013}, we readily verify from the above display that $dc_\Phi(\Xi_\alpha{}^\beta)=0$.
\end{proof}

Our third observation is that if $(M^{2n+1},T^{1,0},\theta)$ is pseudo-Einstein, then the cohomology classes $[c_\Phi(\Omega_\alpha{}^\beta)]$ and $[c_\Phi(\Xi_\alpha{}^\beta)]$ agree.

\begin{proposition}
 \label{prop:reduce-chern-Xi}
 Let $(M^{2n+1},T^{1,0},\theta)$ be a pseudo-Einstein manifold and let $\Phi$ be an invariant polynomial of degree $k$.  Then
 \[ \left[c_\Phi(\Omega_\alpha{}^\beta)\right] = \left[c_\Phi(\Xi_\alpha{}^\beta)\right] . \]
\end{proposition}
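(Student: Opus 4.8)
The plan is to reduce to the power-sum invariants and then compare $\Omega_\alpha{}^\beta$ and $\Xi_\alpha{}^\beta$ by a transgression argument in which the pseudo-Einstein condition makes the difference tractable.

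First I would reduce to the case $\Phi = T_k$, where $T_k(Y_\alpha{}^\beta) := \tr Y^k = Y_{\gamma_1}{}^{\gamma_2}\cdots Y_{\gamma_k}{}^{\gamma_1}$. Since $\{T_k\}_{k\ge1}$ generates the algebra of invariant polynomials and the entries of an $\End(T^{1,0})$-valued two-form commute under the exterior product, the Chern--Weil substitution $c_\Phi(\Omega_\alpha{}^\beta)$ is a fixed polynomial, with wedge products, in the forms $T_j(\Omega_\alpha{}^\beta)$; the same polynomial expresses $c_\Phi(\Xi_\alpha{}^\beta)$ in terms of $T_j(\Xi_\alpha{}^\beta)$. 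By \cref{prop:reduce-chern-Omega,lem:Xi-closed} all of these forms are closed, so the cohomology class of each $c_\Phi$ is that polynomial in the classes $[T_j(\cdot)]$, and it suffices to prove $[T_k(\Omega_\alpha{}^\beta)] = [T_k(\Xi_\alpha{}^\beta)]$ for every $k$.

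Next I would record the difference $E_\alpha{}^\beta := \Omega_\alpha{}^\beta - \Xi_\alpha{}^\beta$, which is built entirely from the CR Schouten tensor, the tensor $T_\alpha$, and their first derivatives. The key simplification is that pseudo-Einstein is equivalent to $P_{\alpha\bar\beta} = \tfrac1n P h_{\alpha\bar\beta}$, i.e. $P_\alpha{}^\beta = \tfrac1n P\,\delta_\alpha^\beta$. Feeding this into the definitions of $S_{\alpha\bar\beta\gamma\bar\sigma}$ and $V_{\alpha\bar\beta\gamma}$ collapses the $(1,1)$-part of $E_\alpha{}^\beta$ to $\tfrac{2}{n}P\bigl(\delta_\alpha^\beta(-i\,d\theta) + \theta^\beta\theta_\alpha\bigr)$ (using $\theta^\mu\theta_\mu = -i\,d\theta$) and expresses the $\theta$-parts through $P$, $T_\alpha$, and $\nabla^\gamma A_{\alpha\gamma}$ alone. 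I would not expect $E_\alpha{}^\beta$ to be a pure multiple of $\delta_\alpha^\beta$, so a naive scalar-shift argument will not close: the off-diagonal pieces, such as $\theta^\beta\theta_\alpha$ and the torsion terms, must be handled.

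To finish I would run the transgression along the straight-line family $\Theta^{(t)}_\alpha{}^\beta := \Xi_\alpha{}^\beta + tE_\alpha{}^\beta$, $t\in[0,1]$. Cyclicity of the trace together with commutativity of two-forms gives
\[
 T_k(\Omega_\alpha{}^\beta) - T_k(\Xi_\alpha{}^\beta) = k\int_0^1 \tr\bigl(E\,(\Theta^{(t)})^{k-1}\bigr)\,dt ,
\]
so it remains to prove that the integrand is exact for each $t$. Here I would use the Bianchi identities of \cref{lem:bianchi} together with the structure equations for $d\Omega_\alpha{}^\beta$ and $d\Xi_\alpha{}^\beta$ obtained in the proofs of \cref{prop:reduce-chern-Omega,lem:Xi-closed}; note that on a pseudo-Einstein manifold the terms $S_\mu{}^\nu{}_\alpha{}^\rho P_\rho{}^\beta - S_\mu{}^\nu{}_\rho{}^\beta P_\alpha{}^\rho$ appearing in $d\Xi_\alpha{}^\beta$ cancel, which is what allows an explicit primitive to be assembled. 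Exhibiting this primitive --- equivalently, organizing the products of $E$ and $\Theta^{(t)}$ so that the non-scalar contributions collect into a divergence --- is the main obstacle, and I expect it to be carried out by the same generating-function expansion ($\mathcal{O}_{s,N}$, $\mathcal{E}_{s,N}$) used for \cref{prop:reduce-chern-Omega}. Combining the resulting exactness with the first step yields $[c_\Phi(\Omega_\alpha{}^\beta)] = [c_\Phi(\Xi_\alpha{}^\beta)]$.
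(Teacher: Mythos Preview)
Your reduction to $T_k$ and the transgression formula are fine, but Step~4 is where the entire content lies, and your expectation about how to carry it out is misplaced. The $\mathcal{O}_{s,N}$, $\mathcal{E}_{s,N}$ machinery of \cref{prop:reduce-chern-Omega} was built around the very specific rank-two structure $\Theta_\alpha{}^\beta = i\theta_\alpha\tau^\beta - i\tau_\alpha\theta^\beta$; the difference $E_\alpha{}^\beta = \Omega_\alpha{}^\beta - \Xi_\alpha{}^\beta$ does not have that shape, so those generating functions do not transfer. You have also not fully exploited the pseudo-Einstein hypothesis on the $\theta$-parts: using $P_\alpha{}^\beta=\tfrac1nP\delta_\alpha^\beta$ in the definition of $V_{\alpha\bar\beta\gamma}$ one finds $\nabla^\beta A_{\alpha\mu} = V_\alpha{}^\beta{}_\mu - \tfrac{2i}{n}\delta_\alpha^\beta\nabla_\mu P - \tfrac{2i}{n}\delta_\mu^\beta\nabla_\alpha P$, so $E_\alpha{}^\beta$ involves only $P$ and $\nabla P$, not $T_\alpha$ or $\nabla^\gamma A_{\alpha\gamma}$ separately.

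The paper proceeds quite differently and avoids both the reduction to $T_k$ and the transgression integral. It writes
\[
\Omega_\alpha{}^\beta = \Xi_\alpha{}^\beta - \tfrac{2i}{n}\delta_\alpha^\beta\,d(P\theta) + \tfrac{2}{n}\Gamma_\alpha{}^\beta,\qquad \Gamma_\alpha{}^\beta := P\theta^\beta\theta_\alpha + i\nabla_\alpha P\,\theta\theta^\beta + i\nabla^\beta P\,\theta\theta_\alpha .
\]
The scalar exact shift $-\tfrac{2i}{n}\delta_\alpha^\beta d(P\theta)$ changes $c_\Phi$ only by an exact form for any invariant $\Phi$, since the trace of $\Phi$ is again invariant. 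The heart of the argument is then the purely algebraic identities
\[
\Xi_\alpha{}^\gamma(\Gamma^m)_\gamma{}^\rho\Xi_\rho{}^\beta = 0,\qquad \Xi_\alpha{}^\beta(\Gamma^m)_\beta{}^\alpha = 0,\qquad (\Gamma^m)_\alpha{}^\alpha = (-1)^{m+1}d\bigl(P^m\theta\,d\theta^{m-1}\bigr),
\]
which follow from an explicit induction on $(\Gamma^m)_\alpha{}^\beta$ together with the symmetry and trace-freeness of $S$ and $V$. These three facts force every cross term in the expansion of $c_\Phi(\Xi+\tfrac{2}{n}\Gamma)$ to vanish, leaving only $c_\Phi(\Xi)$ plus exact pure-$\Gamma$ contributions. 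This is much cleaner than organizing a transgression integrand, and it is exactly the ``explicit primitive'' you were hoping to assemble; the missing idea in your proposal is the vanishing $\Xi\Gamma^m\Xi = 0$, not any differential Bianchi manipulation.
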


\begin{proof}
 Since $(M^{2n+1},T^{1,0},\theta)$ is pseudo-Einstein,
 \begin{align*}
  R_\alpha{}^\beta{}_\mu{}^\nu & = S_\alpha{}^\beta{}_\mu{}^\nu + \frac{2}{n}P\delta_\alpha^\beta\delta_\mu^\nu + \frac{2}{n}P\delta_\alpha^\nu\delta^\beta_\mu , \\
  \nabla^\beta A_{\alpha\mu} & = V_\alpha{}^\beta{}_\mu - \frac{2i}{n}\delta_\alpha^\beta\nabla_\mu P - \frac{2i}{n}\delta_\mu^\beta\nabla_\alpha P .
 \end{align*}
 It follows that
 \begin{equation}
  \label{eqn:Omega-simplification}
  \Omega_\alpha{}^\beta = \Xi_\alpha{}^\beta - \frac{2i}{n}\delta_\alpha^\beta d(P\theta) + \frac{2}{n}\left( P\theta^\beta\theta_\alpha + i\nabla_\alpha P\,\theta\theta^\beta + i\nabla^\beta P\,\theta\theta_\alpha\right) .
 \end{equation}

 On the one hand, since $\Phi$ is an invariant polynomial of degree $k$, its trace $\Phi_{\gamma\alpha_2\dotsm\alpha_k}^{\gamma\beta_2\dotsm\beta_k}$ is an invariant polynomial of degree $k-1$.  Also, by \cref{prop:reduce-chern-Omega}, $c_\Phi(\Omega_\alpha{}^\beta)$ is closed.  It follows immediately that $c_\Phi\bigl(\Omega_\alpha{}^\beta+\frac{2i}{n}\delta_\alpha^\beta d(P\theta)\bigr)$ is closed and
 \begin{equation}
  \label{eqn:reduce-Xi1}
  \left[ c_\Phi\bigl( \Omega_\alpha{}^\beta + \frac{2i}{n}\delta_\alpha^\beta d(P\theta) \bigr) \right] = \left[ c_\Phi\bigl(\Omega_\alpha{}^\beta\bigr) \right] .
 \end{equation}
 On the other hand, set
 \[ \Gamma_\alpha{}^\beta := P\theta^\beta\theta_\alpha + i\nabla_\alpha P\,\theta\theta^\beta + i\nabla^\beta P\,\theta\theta_\alpha . \]
 Note that $\Omega_\alpha{}^\beta+\frac{2i}{n}\delta_\alpha^\beta d(P\theta)=\Xi_\alpha{}^\beta + \frac{2}{n}\Gamma_\alpha{}^\beta$.  A straightforward induction argument yields
 \begin{align*}
  \left(\Gamma^m\right)_\alpha{}^\beta & := \Gamma_\alpha{}^{\gamma_2}\Gamma_{\gamma_2}{}^{\gamma_3}\dotsm\Gamma_{\gamma_m}{}^\beta \\
  & = (-1)^{m+1}iP^{m-1}\left(\Gamma_\alpha{}^\beta d\theta^{m-1} + (m-1)(dP)\theta\theta^\beta\theta_\alpha d\theta^{m-2}\right)
 \end{align*}
 for all $m\in\bN$.  In particular, we deduce that
 \begin{align*}
  \left(\Gamma^m\right)_\alpha{}^\alpha & = (-1)^{m+1}d\left(P^m\theta(d\theta)^{m-1}\right), \\
  \Xi_\alpha{}^\beta\left(\Gamma^m\right)_\beta{}^\alpha & = 0, \\
  \Xi_\alpha{}^\gamma\left(\Gamma^m\right)_\gamma{}^\rho\Xi_\rho{}^\beta & = 0
 \end{align*}
 for all $m\in\bN$.  Combining this with \cref{lem:Xi-closed} yields
 \begin{equation}
  \label{eqn:reduce-Xi2}
  \left[ c_\Phi\bigl( \Xi_\alpha{}^\beta + \frac{2}{n}\Gamma_\alpha{}^\beta \bigr) \right] = \left[ c_\Phi\bigl( \Xi_\alpha{}^\beta \bigr) \right] .
 \end{equation}

 The conclusion follows immediately from \cref{eqn:Omega-simplification,eqn:reduce-Xi1,eqn:reduce-Xi2}.
\end{proof}

\begin{remark}
 In his proof of~\cite[Proposition 5.4]{Marugame2019}, Marugame showed that the conclusion of \cref{prop:reduce-chern-Xi} is true if the assumption on $(M^{2n+1},T^{1,0},\theta)$ is weakened to only assume that $c_1(T^{1,0})=0$ in $H^2(M;\bR)$.   
\end{remark}

\section{The invariance of $X_\alpha^\Phi$ and the total $\mI_\Phi^\prime$-curvature}
\label{sec:invariance}

In this section we prove that $X_\alpha^\Phi$ and $\nabla^\alpha X_\alpha^\Phi$ are CR invariant, derive the transformation formula for $\mI_\Phi^\prime$ under change of contact form, and conclude that the total $\mI_\Phi^\prime$-curvature is a secondary CR invariant.

First we prove that $X_\alpha^\Phi$ and $\nabla^\alpha X_\alpha^\Phi$ are CR invariant.

\begin{proof}[Proof of~\cref{thm:X-invariant}]
 On the one hand, since $c_\Phi(S)$ is a scalar CR invariant of weight $-n$, we conclude from~\cref{lem:variations} that
 \begin{equation}
  \label{eqn:Xinv-grad}
  D_\theta\nabla_\alpha c_\Phi(S)(\Upsilon) = -nc_\Phi(S)\Upsilon_\alpha .
 \end{equation}
 On the other hand, since $D_\theta$ is a derivation and $\mS^\Phi$ is a local CR invariant, we conclude from~\cref{lem:variations} that
 \begin{equation}
  \label{eqn:Xinv-div}
  D_\theta\left(i(\mS^\Phi)_\alpha{}^\beta{}_\mu{}^\nu V_\beta{}^\mu{}_\nu\right)(\Upsilon) = -(\mS^\Phi)_\alpha{}^\beta{}_\mu{}^\nu S_\beta{}^\rho{}_\nu{}^\mu \Upsilon_\rho .
 \end{equation}
 Since $M$ has CR dimension $n$, it holds that
 \[ (\mU^\Phi)_\alpha{}^\beta := \delta_{\alpha\alpha_1\dotsm\alpha_n}^{\beta\beta_1\dotsm\beta_n}\Phi_{\mu_1\dotsm\mu_n}^{\nu_1\dotsm\nu_n}S_{\beta_1}{}^{\alpha_1}{}_{\nu_1}{}^{\mu_1}\dotsm S_{\beta_n}{}^{\alpha_n}{}_{\nu_n}{}^{\mu_n} = 0 . \]
 In particular,
 \begin{equation}
  \label{eqn:Xinv-trivial}
  0 = (\mU^\Phi)_\alpha{}^\beta\Upsilon_\beta = c_\Phi(S)\Upsilon_\alpha - n(\mS^\Phi)_\alpha{}^\beta{}_\mu{}^\nu S_\beta{}^\rho{}_\nu{}^\mu\Upsilon_\rho .
 \end{equation}
 Combining~\cref{eqn:Xinv-grad,eqn:Xinv-div,eqn:Xinv-trivial} implies that $X_\alpha^\Phi$ is a CR invariant $(1,0)$-form of weight $-n$.  It follows immediately from \cref{lem:variations} that $\nabla^\alpha X_\alpha^\Phi$ is a CR invariant of weight $-n-1$.
\end{proof}

Next we derive the transformation formula for $\mI_\Phi^\prime$ under change of contact form.

\begin{proof}[Proof of~\cref{thm:mI-transformation}]
 It follows from~\cref{lem:variations} that
 \begin{align}
  \label{eqn:mI-lapl} D_\theta\left(\Delta_bc_\Phi(S)-2nPc_\Phi(S)\right)(\Upsilon) & = -2n\Real \Upsilon^\gamma\nabla_\gamma c_\Phi(S) , \\
  \label{eqn:mI-V} D_\theta\left(V_\beta{}^{\mu_1}{}_{\nu_1} V^\alpha{}_{\nu_2}{}^{\mu_2}\right)(\Upsilon) & = -2\Real iV_\beta{}^{\mu_1}{}_{\nu_1}S_\rho{}^\alpha{}_{\nu_2}{}^{\mu_2}\Upsilon^\rho .
 \end{align}
 Since $\mT^\Phi$ is a local CR invariant, we conclude from~\cref{lem:variations,eqn:mI-V} that
 \begin{equation}
  \label{eqn:mI-mT}
  D_\theta\mV(\Upsilon) = 2\Real i(S^\Phi)_\alpha{}^\beta{}_\mu{}^\nu V_\beta{}^\mu{}_\nu \Upsilon^\alpha ,
 \end{equation}
 where
 \[ \mV := (\mT^\Phi)_\alpha{}^\beta{}_{\mu_1}{}^{\nu_1}{}_{\mu_2}{}^{\nu_2}\left((n-1)V_\beta{}^{\mu_1}{}_{\nu_1} V^\alpha{}_{\nu_2}{}^{\mu_2} - S_\beta{}^\alpha{}_{\nu_1}{}^{\mu_1} U_{\nu_2}{}^{\mu_2}\right) . \]
 Combining~\cref{eqn:mI-lapl,eqn:mI-mT,thm:X-invariant} yields
 \[ \frac{\partial}{\partial t} e^{(n+1)t\Upsilon} \bigl(\mI_\Phi^\prime\bigr)^{e^{t\Upsilon}\theta} = e^{(n+1)t\Upsilon}\bigl(2\Real  X_\alpha^\Phi \Upsilon^\alpha\bigr)^{e^{t\Upsilon}\theta} = 2\Real(X_\alpha^\Phi\Upsilon^\alpha)^\theta . \]
 Integrating this equation in $t\in[0,1]$ yields the desired result.
\end{proof}

The rest of this section is devoted to the proof that the total $\mI_\Phi^\prime$-curvature is a secondary CR invariant.  The main task is to relate $X_\alpha^\Phi$ to the characteristic class $c_\Phi(T^{1,0})$.

\begin{proposition}
 \label{prop:chern}
 Let $(M^{2n+1},T^{1,0},\theta)$ be a pseudo-Einstein manifold and let $\Phi$ be an invariant polynomial of degree $n$.  Set
 \[ \xi^\Phi := X_\alpha^\Phi\theta\wedge\theta^\alpha\wedge d\theta^{n-1} + X_{\bar\beta}^\Phi\theta\wedge\theta^{\bar\beta}\wedge d\theta^{n-1} . \]
 Then $\xi^\Phi$ is closed.  Moreover, $n[\xi^\Phi]=-(2\pi)^n(n-1)!c_\Phi(T^{1,0})$ in $H^{2n}(M;\bR)$.
\end{proposition}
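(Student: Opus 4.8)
The plan is to realize $\xi^\Phi$, up to an exact form, as a fixed constant multiple of the characteristic form $c_\Phi(\Xi_\alpha{}^\beta)$ from \cref{eqn:Xi}, and then to push $[c_\Phi(\Xi_\alpha{}^\beta)]$ through \cref{prop:reduce-chern-Xi,prop:reduce-chern-Omega} to reach $c_\Phi(T^{1,0})$. Concretely, I aim to establish an identity of the form
\begin{equation}
 \label{eqn:plan-main}
 c_\Phi(\Xi_\alpha{}^\beta) = A\,\xi^\Phi + d\beta
\end{equation}
for a nonzero constant $A=A(n)$ and some $(2n-1)$-form $\beta$; a single such identity delivers both conclusions of the proposition.

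First I would expand $c_\Phi(\Xi_\alpha{}^\beta)$. Write $\Xi_\alpha{}^\beta=\Sigma_\alpha{}^\beta+W_\alpha{}^\beta$, where $\Sigma_\alpha{}^\beta:=S_\alpha{}^\beta{}_\mu{}^\nu\theta^\mu\theta_\nu$ is the horizontal $(1,1)$-part and $W_\alpha{}^\beta:=-V_\alpha{}^\beta{}_\mu\theta\theta^\mu+V^\beta{}_\alpha{}^\nu\theta\theta_\nu$ carries a factor of $\theta$. Any product of two $W$'s vanishes since $\theta\wedge\theta=0$, so, using the symmetry of $\Phi$,
\begin{equation*}
 c_\Phi(\Xi_\alpha{}^\beta) = E + F,
\end{equation*}
where
\begin{align*}
 E & := \Phi_{\alpha_1\dotsm\alpha_n}^{\beta_1\dotsm\beta_n}\Sigma_{\beta_1}{}^{\alpha_1}\dotsm\Sigma_{\beta_n}{}^{\alpha_n} , \\
 F & := n\,\Phi_{\alpha_1\dotsm\alpha_n}^{\beta_1\dotsm\beta_n}W_{\beta_1}{}^{\alpha_1}\Sigma_{\beta_2}{}^{\alpha_2}\dotsm\Sigma_{\beta_n}{}^{\alpha_n} .
\end{align*}
Here $E$ is a horizontal $2n$-form and $F$ carries exactly one factor of $\theta$.

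For the horizontal term $E$, I would reorder the factors $\theta^{\mu_i}\wedge\theta_{\nu_i}$ and use that wedging the $(1,0)$-forms, and separately the $(0,1)$-forms, antisymmetrizes their indices, converting the wedge product into a generalized Kronecker delta on the form indices times the volume-type form $d\theta^n$. Since the Chern tensor is symmetric under interchange of its two index pairs, this delta can be transferred onto the curvature indices and combined with $\Phi$, so that $E=K\,c_\Phi(S)\,d\theta^n$ for an explicit constant $K$ carrying the factors of $i^n$ and $n!$. I then rewrite this using $d\theta^n=d(\theta\wedge d\theta^{n-1})$ and the Leibniz rule, noting that wedging $dc_\Phi(S)$ against $\theta$ kills its Reeb component:
\begin{multline*}
 E = K\,d\bigl(c_\Phi(S)\,\theta\wedge d\theta^{n-1}\bigr) \\
  + K\bigl(\nabla_\alpha c_\Phi(S)\,\theta\wedge\theta^\alpha\wedge d\theta^{n-1} + \nabla_{\bar\beta}c_\Phi(S)\,\theta\wedge\theta^{\bar\beta}\wedge d\theta^{n-1}\bigr).
\end{multline*}
The non-exact remainder of $E$ thus equals $-Kn^2$ times the gradient contribution $-\tfrac{1}{n^2}\nabla_\alpha c_\Phi(S)\,\theta\wedge\theta^\alpha\wedge d\theta^{n-1}+(\text{conj.})$ of $X_\alpha^\Phi$ to $\xi^\Phi$; set $A:=-Kn^2$. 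For the term $F$, the same wedge-to-delta mechanism, now applied to the $n-1$ copies of $\Sigma$ together with the single $\theta^\mu$ (resp.\ $\theta_\nu$) from $W$, produces exactly the generalized Kronecker delta occurring in $\mS^\Phi$; reconciling index placements via the symmetries of $S$ and $V$ identifies $F$ with a constant $A'$ times the remaining contribution $i(\mS^\Phi)_\alpha{}^\beta{}_\mu{}^\nu V_\beta{}^\mu{}_\nu\,\theta\wedge\theta^\alpha\wedge d\theta^{n-1}+(\text{conj.})$ of $X_\alpha^\Phi$ to $\xi^\Phi$. The normalization $\tfrac{1}{n^2}$ in \cref{eqn:Xn} is precisely the one for which $A'=A$, so that the $E$- and $F$-contributions assemble into the single multiple $A\,\xi^\Phi$, establishing \cref{eqn:plan-main}.

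Finally I would read off both conclusions. Applying $d$ to \cref{eqn:plan-main} and using that $c_\Phi(\Xi_\alpha{}^\beta)$ is closed (\cref{lem:Xi-closed}) gives $A\,d\xi^\Phi=0$, whence $\xi^\Phi$ is closed. Taking cohomology classes in \cref{eqn:plan-main} and applying \cref{prop:reduce-chern-Omega,prop:reduce-chern-Xi} (the latter invoking the pseudo-Einstein hypothesis) yields
\[ A[\xi^\Phi] = \left[c_\Phi(\Xi_\alpha{}^\beta)\right] = \left[c_\Phi(\Omega_\alpha{}^\beta)\right] = \left[c_\Phi(\Pi_\alpha{}^\beta)\right] = \left(\tfrac{2\pi}{i}\right)^n c_\Phi(T^{1,0}) , \]
and substituting the computed value of $A$ while simplifying the powers of $i$ gives $n[\xi^\Phi]=-(2\pi)^n(n-1)!\,c_\Phi(T^{1,0})$. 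The main obstacle is the bookkeeping in the two wedge-to-Kronecker-delta conversions: tracking the signs and factorials generated by reordering the commuting $(1,1)$-factors, and exploiting the pair-interchange symmetry of the Chern tensor to match the placement of $\Phi$ in $c_\Phi$ against its placement in the definitions of $\mS^\Phi$ and $c_\Phi(S)$. Making the two resulting constants agree into the common coefficient $A$ is simultaneously the technical core of the argument and the structural reason for the normalizations chosen in \cref{eqn:Xn}.
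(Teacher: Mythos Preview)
Your proposal is correct and follows essentially the same route as the paper: expand $c_\Phi(\Xi_\alpha{}^\beta)$ into its purely horizontal part and its $\theta$-containing part, convert the wedge products into Kronecker deltas to recognize $c_\Phi(S)$ and $(\mS^\Phi)_\alpha{}^\beta{}_\mu{}^\nu V_\beta{}^\mu{}_\nu$, absorb the horizontal piece into an exact term via $d\theta^n=d(\theta\wedge d\theta^{n-1})$, and then invoke \cref{lem:Xi-closed,prop:reduce-chern-Omega,prop:reduce-chern-Xi}. The only cosmetic difference is that the paper works with $c_\Phi(i\Xi_\alpha{}^\beta)$ and records the constants explicitly (obtaining $K=\tfrac{1}{n!}$ and $A=-\tfrac{n}{(n-1)!}$ for $c_\Phi(i\Xi)$), whereas you leave the constants symbolic and defer the $i^n$ bookkeeping to the final step.
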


\begin{proof}
 Combining \cref{prop:reduce-chern-Omega,prop:reduce-chern-Xi} yields
 \begin{equation}
  \label{eqn:chern}
  (2\pi)^nc_\Phi\left(T^{1,0}\right) = \left[c_\Phi\left(i\Xi_\alpha{}^\beta\right)\right] ,
 \end{equation}
 where $\Xi_\alpha{}^\beta$ is defined by \cref{eqn:Xi}.  An easy computation yields
 \begin{multline*}
  c_\Phi\left(i\Xi_\alpha{}^\beta\right) = i^n\Phi_{\alpha_1\dotsm\alpha_n}^{\beta_1\dotsm\beta_n}S_{\beta_1}{}^{\alpha_1}{}_{\mu_1}{}^{\nu_1}\dotsm S_{\beta_n}{}^{\alpha_n}{}_{\mu_n}{}^{\nu_n}\theta^{\mu_1}\theta_{\nu_1}\dotsm\theta^{\mu_n}\theta_{\nu_n} \\ 
   - ni^n\Phi_{\alpha_1\dotsm\alpha_n}^{\beta_1\dotsm\beta_n}V_{\beta_1}{}^{\alpha_1}{}_{\mu_1}S_{\beta_2}{}^{\alpha_2}{}_{\mu_2}{}^{\nu_2}\dotsm S_{\beta_n}{}^{\alpha_n}{}_{\mu_n}{}^{\nu_n} \theta\theta^{\mu_1}\theta^{\mu_2}\theta_{\nu_2}\dotsm\theta^{\mu_n}\theta_{\nu_n} \\
   + ni^n\Phi_{\alpha_1\dotsm\alpha_n}^{\beta_1\dotsm\beta_n}V^{\alpha_1}{}_{\beta_1}{}^{\nu_1}S_{\beta_2}{}^{\alpha_2}{}_{\mu_2}{}^{\nu_2}\dotsm S_{\beta_n}{}^{\alpha_n}{}_{\mu_n}{}^{\nu_n}\theta\theta_{\nu_1}\theta^{\mu_2}\theta_{\nu_2}\dotsm\theta^{\mu_n}\theta_{\nu_n} .
 \end{multline*}
 Since $\dim T^{1,0}=n$, it must hold that $c_\Phi(\Xi_\alpha{}^\beta)$ is in the span of $d\theta^n$, $\theta\theta^\alpha d\theta^{n-1}$, and $\theta\theta_\beta d\theta^{n-1}$.  We then compute that
 \begin{multline*}
  c_\Phi\left(i\Xi_\alpha{}^\beta\right) = \frac{1}{n!}c_\Phi(S)\,d\theta^n - \frac{ni}{(n-1)!}(\mS^\Phi)_\alpha{}^\beta{}_\mu{}^\nu V_\beta{}^\mu{}_\nu \theta\theta^\alpha d\theta^{n-1} \\ + \frac{ni}{(n-1)!}(\mS^\Phi)_\alpha{}^\beta{}_\mu{}^\nu V^\alpha{}_\nu{}^\mu\theta\theta_\beta d\theta^{n-1} .
 \end{multline*}
 In particular,
 \[ c_\Phi\left(i\Xi_\alpha{}^\beta\right) = \frac{1}{n!}d\left(c_\Phi(S)\,\theta\,d\theta^{n-1}\right) - \frac{n}{(n-1)!}\xi^\Phi . \]
 We conclude that $\xi^\Phi$ is closed and $\bigl[c_\Phi(i\Xi_\alpha{}^\beta)\bigr]=-\frac{n}{(n-1)!}[\xi^\Phi]$.  The conclusion now follows from \cref{eqn:chern}.
\end{proof}

We now conclude that the total $\mI_\Phi^\prime$-curvature is a secondary invariant.

\begin{proof}[Proof of~\cref{thm:secondary_invariant}]
 Let $\xi^\Phi$ be as in~\cref{prop:chern}.  We may thus consider the cohomology class $[\xi^\Phi]\in H^{2n}(M;\bR)$.  Recall~\cite[Lemma~3.1]{Lee1988} that $\Upsilon\in C^\infty(M)$ is CR pluriharmonic if and only if $d_b^c\Upsilon:=i(\Upsilon_{\bar\beta}\theta^{\bar\beta}-\Upsilon_\alpha\theta^\alpha)\in\Gamma\left(T^\ast M/\lp\theta\rp\right)$ is closed in the sense of Rumin~\cite{Rumin1994}.  In particular, for any $\Upsilon\in \mP$, the cup product $[\xi^\Phi]\cup[d_b^c\Upsilon]:=[\xi^\Phi\wedge d_b^c\Upsilon]$ is well-defined in $H^{2n+1}(M;\bR)$.  A straightforward computation implies that
 \begin{equation}
  \label{eqn:cup}
  \left\lp [\xi^\Phi] \cup [d_b^c\Upsilon], [M] \right\rp = \frac{2}{n}\Real\int_M X_\alpha^\Phi\Upsilon^\alpha\,\theta\wedge d\theta^n ,
 \end{equation}
 where $[M]$ is the fundamental class of $M$.
 
 Next, a result of Takeuchi~\cite[Theorem~1.1]{Takeuchi2020} implies that $c_\Phi(T^{1,0})=0$.  Combining \cref{prop:chern,eqn:cup} yields $\Real\int_M X_\alpha^\Phi\Upsilon^\alpha\,\theta\wedge d\theta^n=0$ for all $\Upsilon\in\mP$.  Combining this with~\cref{eqn:mI-transformation} yields the desired result.
\end{proof}

\section{Counterexamples to the Hirachi conjecture}
\label{sec:hirachi}

As noted in the introduction, if \cref{conj:strong_hirachi} holds, then
the $\mI_{\Phi}^{\prime}$-curvature must be a linear combination of a local CR invariant and a divergence.
However,
there is a local CR invariant whose vanishing is necessary for $\mI_\Phi^\prime$ to be a linear combination of a local CR invariant and a divergence.

\begin{lemma}
\label{lem:construction-of-counterexample}
	Let $\Phi$ be an invariant polynomial of degree $n$.
	If there exists a pseudohermitian manifold $(M^{2n+1}, T^{1, 0}, \theta)$
	such that $\Real \nabla^{\alpha} X^{\Phi}_{\alpha}$ is not identically zero,
	then $\mathcal{I}_{\Phi}^{\prime}$ is not the linear combination of a local CR invariant and a divergence.
\end{lemma}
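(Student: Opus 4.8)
The plan is to prove the contrapositive: assuming that $\mI_\Phi^\prime$ \emph{can} be written as $\mI_\Phi^\prime = J + \divsymb(W)$ with $J$ a local CR invariant and $\divsymb(W)$ a divergence, I will show that $\Real\nabla^\alpha X_\alpha^\Phi$ must vanish identically on every pseudohermitian manifold. Since both the notion of a local CR invariant and that of a divergence refer to universal (natural) pseudohermitian formulae, the hypothesized decomposition is an identity of natural invariants, and so is the conclusion $\Real\nabla^\alpha X_\alpha^\Phi\equiv0$; the latter contradicts the existence of the manifold in the statement. Extracting homogeneous components, I may assume that $J$ and $W$ have the same weight as $\mI_\Phi^\prime$; by \cref{thm:mI-transformation} this weight is $-(n+1)$, so that $J\,\theta\wedge d\theta^n$ is a CR invariant density.

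The first step is an integral identity on an arbitrary closed pseudohermitian manifold $(M^{2n+1},T^{1,0},\theta)$. Writing $\htheta=e^\Upsilon\theta$ and using $\htheta\wedge d\htheta^n=e^{(n+1)\Upsilon}\theta\wedge d\theta^n$, I integrate the transformation law of \cref{thm:mI-transformation} to obtain
\begin{equation*}
 \int_M \hmI_\Phi^\prime\,\htheta\wedge d\htheta^n = \int_M \mI_\Phi^\prime\,\theta\wedge d\theta^n + 2\int_M \Real\bigl(X_\alpha^\Phi\Upsilon^\alpha\bigr)\,\theta\wedge d\theta^n .
\end{equation*}
Since $\Real\nabla^\alpha X_\alpha^\Phi$ is a local CR invariant of weight $-n-1$ by \cref{thm:X-invariant}, integrating the last term by parts (the boundary term vanishes because $M$ is closed) yields
\begin{equation*}
 \int_M \hmI_\Phi^\prime\,\htheta\wedge d\htheta^n = \int_M \mI_\Phi^\prime\,\theta\wedge d\theta^n - 2\int_M \bigl(\Real\nabla^\alpha X_\alpha^\Phi\bigr)\Upsilon\,\theta\wedge d\theta^n .
\end{equation*}

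The second step uses the assumed decomposition to evaluate the left-hand side. Because $J$ has weight $-(n+1)$, the density $J\,\theta\wedge d\theta^n$ is invariant under change of contact form, so $\int_M \widehat{J}\,\htheta\wedge d\htheta^n=\int_M J\,\theta\wedge d\theta^n$; and the divergence $\divsymb(W)$ integrates to zero against $\htheta\wedge d\htheta^n$ and against $\theta\wedge d\theta^n$ since $M$ is closed. Hence $\int_M \hmI_\Phi^\prime\,\htheta\wedge d\htheta^n=\int_M \mI_\Phi^\prime\,\theta\wedge d\theta^n$, and comparing with the identity above gives
\begin{equation*}
 \int_M \bigl(\Real\nabla^\alpha X_\alpha^\Phi\bigr)\Upsilon\,\theta\wedge d\theta^n = 0
\end{equation*}
for every $\Upsilon\in C^\infty(M)$. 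As $\Real\nabla^\alpha X_\alpha^\Phi$ is a fixed real-valued function, this forces $\Real\nabla^\alpha X_\alpha^\Phi\equiv0$ on $M$.

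The final, and most delicate, step is to upgrade this from an arbitrary closed manifold to an identity of natural invariants. Here I use that $\Real\nabla^\alpha X_\alpha^\Phi$ is a pointwise polynomial formula in the jets of the pseudohermitian structure, together with the fact that every such jet is realized at a point of some closed strictly pseudoconvex pseudohermitian manifold; hence vanishing on all closed manifolds forces the formula itself to vanish. I expect this naturality/realizability step to be the main obstacle, since it is the only place where one passes from the integrated (global) conclusion back to the pointwise (local) statement. Note that in the applications the manifolds produced in \cref{thm:counterexample-via-perturb-intro,thm:counterexamples-via-calabi-yau-intro} are already closed, so there the contradiction is immediate. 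With the formula shown to vanish identically, the contrapositive is complete.
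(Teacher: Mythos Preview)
Your argument is correct and follows essentially the same route as the paper: both derive from \cref{thm:mI-transformation} the identity
\[
 \int_M \hmI_\Phi^\prime\,\htheta\wedge d\htheta^n - \int_M \mI_\Phi^\prime\,\theta\wedge d\theta^n = -2\int_M \bigl(\Real\nabla^\alpha X_\alpha^\Phi\bigr)\Upsilon\,\theta\wedge d\theta^n,
\]
and both observe that the left-hand side vanishes under the assumed decomposition. The only difference is that the paper bypasses your ``delicate'' third step entirely: rather than varying $\Upsilon$ over all of $C^\infty(M)$ and then invoking jet-realizability to pass from closed manifolds to a universal identity, the paper simply works on the given manifold and takes the single choice $\Upsilon=\Real\nabla^\alpha X_\alpha^\Phi$, so that the right-hand side becomes $-2\int_M(\Real\nabla^\alpha X_\alpha^\Phi)^2\,\theta\wedge d\theta^n<0$, an immediate contradiction. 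This is slicker, but it tacitly assumes the witness manifold is closed --- exactly the point you flagged and handled more carefully. Since, as you note, the examples in \cref{thm:counterexample-via-perturb-intro,thm:counterexamples-via-calabi-yau-intro} are closed, the distinction is cosmetic for the purposes of the paper.
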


\begin{proof}
    Suppose to the contrary that $\mathcal{I}_{\Phi}^{\prime}$ is a linear combination of a local CR invariant and a divergence.
    Then the total integral of $\mathcal{I}_{\Phi}^{\prime}$ is independent of the choice of contact form.
    However, under the conformal change
    \begin{equation*}
        \widehat{\theta} = \exp(\Real \nabla^{\alpha} X^{\Phi}_{\alpha}) \cdot \theta,
    \end{equation*}
    \cref{thm:mI-transformation} implies that
    \begin{equation*}
        \int_{M} \widehat{\mathcal{I}}_{\Phi}^{\prime} \widehat{\theta} \wedge d \widehat{\theta}^{n}
        = \int_{M} \mathcal{I}_{\Phi}^{\prime} \theta \wedge d \theta^{n}
            - 2 \int_{M} (\Real \nabla^{\alpha} X^{\Phi}_{\alpha})^{2}  \theta \wedge d \theta^{n}.
    \end{equation*}
    Since $\Real \nabla^{\alpha} X^{\Phi}_{\alpha} \neq 0$, the total integral of $\mI_\Phi^\prime$ depends on $\theta$, a contradiction.
\end{proof}

Given an invariant polynomial $\Phi$ of degree $n$, \cref{lem:construction-of-counterexample} and the discussion of the introduction shows that one need only find an example of a pseudo-Einstein manifold $(M^{2n+1},T^{1,0},\theta)$ for which $\Real\nabla^\alpha X_\alpha^\Phi$ is not identically zero in order to conclude that that $\mI_\Phi^\prime$ gives a counterexample to \cref{conj:strong_hirachi}.  Indeed, since $\Real\nabla^\alpha X_\alpha^\Phi$ is CR invariant, it suffices to find a pseudohermitian manifold $(M^{2n+1},T^{1,0},\theta)$ which admits a pseudo-Einstein contact form and is such that $\Real\nabla^\alpha X_\alpha^\Phi$ is not identically zero.  We shall present two ways to find such a manifold.

First, in \cref{sec:perturb}, we compute the change of $\Real\nabla^\alpha X_\alpha^\Phi$ along a particular perturbation of the round CR $(2n+1)$-sphere.  This approach is computationally challenging and can be regarded as a local (in the space of CR structures on $S^{2n+1}$) generalization of computations of Reiter and Son~\cite{ReiterSon2019} for five-dimensional real ellipsoids.

Second, in \cref{sec:calabi-yau}, we compute $\Real\nabla^\alpha X_\alpha^\Phi$ on circle bundles over a Calabi--Yau manifold in the case when $\Phi$ is the generalized Kronecker delta.  This approach is computationally simple and relies on explicit examples of degenerating sequences of Calabi--Yau manifolds in complex dimensions two and three.

\section{Counterexample via perturbations of $S^{2n+1}$}
\label{sec:perturb}

The purpose of this section is to prove \cref{thm:counterexample-via-perturb-intro} by considering the $\mI_\Phi^\prime$-curvatures on perturbations of the round CR $(2n+1)$-sphere.  To that end, we need to know the first variation of the Chern tensor $S_{\alpha\bar\beta\gamma\bar\sigma}$ along a suitable deformation.  This formula is known~\cite{Hirachi2014}, but since we cannot find a proof in the literature, we provide one here.

\begin{lemma}
 \label{lem:linearize_chern}
 Let $\rho_t\colon\bC^{n+1}\to\bR$ be a one-parameter family of smooth functions such that $\rho_0(z)=1-\lv z\rv^2$.  Set $M_t:=\rho_t^{-1}(0)$, $T_t^{1,0}:=T^{1,0}\bC^{n+1}\cap(TM_t\otimes\bC)$, and $\theta_t:=\Imaginary\dbbar\rho_t\rv_{M_t}$.  Let $F_t\colon\bC^{n+1}\to\bC^{n+1}$ be a one-parameter family of diffeomorphisms such that $F_0=\Id$, $\rho_t\circ F_t=\rho_0$, and $F_t^\ast\ker\theta_t=\ker\theta_0$.  Denote by $S^t:=F_t^\ast S^{\theta_t}$ the pullback of the Chern tensor of $\theta_t$ by $F_t$.  Then
 \begin{equation}
  \label{eqn:linearize_chern}
  \dot S_{\alpha\bar\beta\gamma\bar\sigma} := \biggl(\left. \frac{\partial}{\partial t}\right|_{t=0} S^t\biggr)_{\alpha\bar\beta\gamma\bar\sigma}= \tf(\dot\rho_t)_{\alpha\bar\beta\gamma\bar\sigma} ,
 \end{equation}
 where $\tf u_{\alpha\bar\beta\gamma\bar\sigma}$ denotes the totally trace-free part of $u_{\alpha\bar\beta\gamma\bar\sigma}$,
 \begin{multline*}
  \tf u_{\alpha\bar\beta\gamma\bar\sigma} := u_{\alpha\bar\beta\gamma\bar\sigma} - \frac{1}{n+2}\left(u_{\alpha\bar\beta\mu}{}^\mu h_{\gamma\bar\sigma} + u_{\gamma\bar\beta\mu}{}^\mu h_{\alpha\bar\sigma} + u_{\alpha\bar\sigma\mu}{}^\mu h_{\gamma\bar\beta} + u_{\gamma\bar\sigma\mu}{}^\mu h_{\alpha\bar\beta}\right) \\ + \frac{1}{(n+1)(n+2)}u_\mu{}^\mu{}_\nu{}^\nu\left(h_{\alpha\bar\beta}h_{\gamma\bar\sigma} + h_{\alpha\bar\sigma}h_{\gamma\bar\beta}\right) ,
 \end{multline*}
 and $h_{\alpha\bar\beta}$ is the Levi form of the round CR $(2n+1)$-sphere $(M_0,T_0^{1,0})$.
\end{lemma}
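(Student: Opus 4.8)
The plan is a direct linearization of the Chern tensor at the sphere, organized around the fact that $S^{\theta_0}=0$ because $(M_0,T_0^{1,0})$ is the round sphere; this vanishing does much of the bookkeeping for us. Writing $\frac{\partial}{\partial t}\big|_{t=0}F_t^\ast S^{\theta_t}=\mL_V S^{\theta_0}+\dot S^{\theta}$ with $V=\frac{\partial}{\partial t}\big|_{t=0}F_t$, the Lie-derivative term vanishes, so $\dot S$ is independent of the choice of $F_t$. Moreover, replacing $\rho_t$ by $e^{tf}\rho_t$ rescales $\theta_t$ by $e^{tf}|_{M_t}$ and hence rescales the weight-one tensor $S^{\theta_t}$ by the same factor; since $S^{\theta_0}=0$, this does not change $\dot S$ either. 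As $\dot\rho\mapsto\dot\rho+f\rho_0$ under this rescaling and $\rho_0$ has nonvanishing transverse derivative along $M_0$, it follows that $\dot S$ depends only on $u:=\dot\rho|_{M_0}$ and its tangential derivatives. Thus $\dot S$ is a natural fourth-order tangential operator applied to $u$, valued in tensors with the symmetry and trace-free type of the Chern tensor.

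To carry out the computation I would introduce an explicit $t$-dependent admissible coframe adapted to $M_t$. Using $z^{n+1}$ as a transverse coordinate and solving the tangency condition for $(1,0)$-vectors, one obtains a smooth frame $\{Z_\alpha^t\}$ of $T_t^{1,0}$, dual coframe $\{\theta_t^\alpha\}$, and Levi form $h_{\alpha\bar\beta}^t$; pulling these back by $F_t$ and expanding in $t$ gives $\rho_t=\rho_0+t\dot\rho+O(t^2)$ together with first variations $\dot h_{\alpha\bar\beta}$, $\dot\theta^\alpha$, and so on. I would then differentiate the structure equations $d\theta=ih_{\alpha\bar\beta}\theta^\alpha\wedge\theta^{\bar\beta}$, $d\theta^\alpha=\theta^\beta\wedge\omega_\beta{}^\alpha+\theta\wedge\tau^\alpha$, and $\Pi_\alpha{}^\beta=d\omega_\alpha{}^\beta-\omega_\alpha{}^\gamma\wedge\omega_\gamma{}^\beta$ at $t=0$, solving the resulting linearized equations for $\dot\omega_\alpha{}^\beta$, $\dot A_{\alpha\beta}$, and $\dot R_{\alpha\bar\beta\gamma\bar\sigma}$; the normalization conditions that uniquely determine the Tanaka--Webster connection make these solvable. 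Throughout I would substitute the explicit sphere background, for which $A_{\alpha\beta}=0$, $S_{\alpha\bar\beta\gamma\bar\sigma}=0$, $P_{\alpha\bar\beta}=\tfrac12 h_{\alpha\bar\beta}$, and $R_{\alpha\bar\beta\gamma\bar\sigma}=h_{\alpha\bar\beta}h_{\gamma\bar\sigma}+h_{\alpha\bar\sigma}h_{\gamma\bar\beta}$.

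With these variations in hand I would assemble $\dot S_{\alpha\bar\beta\gamma\bar\sigma}=\dot R_{\alpha\bar\beta\gamma\bar\sigma}-\dot P_{\alpha\bar\beta}h_{\gamma\bar\sigma}-\dot P_{\alpha\bar\sigma}h_{\gamma\bar\beta}-\dot P_{\gamma\bar\beta}h_{\alpha\bar\sigma}-\dot P_{\gamma\bar\sigma}h_{\alpha\bar\beta}$, together with the terms produced by varying the Levi forms $h_{\gamma\bar\sigma}$ against the background Schouten tensor. Expressing every variation in covariant derivatives of $u$ on the sphere and simplifying, I expect the pure-trace contributions to cancel against the Schouten subtractions, leaving precisely $\tf(u_{\alpha\bar\beta\gamma\bar\sigma})$. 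Since on the sphere every commutator of covariant derivatives produces curvature terms of the form $h\,h$, which are pure trace, the ordering ambiguity in $u_{\alpha\bar\beta\gamma\bar\sigma}$ is immaterial after applying $\tf$, so the right-hand side is unambiguous.

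The main obstacle is the second step: solving the linearized structure equations for the connection and curvature variations, and in particular tracking the variation of the frame itself rather than only that of the Levi form. The delicate point is that the lower-order trace terms cancel only after the Schouten terms are subtracted, so the trace corrections must be handled carefully in order to land exactly on the totally trace-free part. As a guard against computational error one may verify the final identity structurally: $\dot S$ must be trace-free with the Chern symmetries and must be a CR-covariant natural operator on $u$; the only such fourth-order operator with the correct principal part is $\tf(u_{\alpha\bar\beta\gamma\bar\sigma})$, so it suffices to confirm that the top-order symbol of $\dot S$ agrees with $u_{\alpha\bar\beta\gamma\bar\sigma}$.
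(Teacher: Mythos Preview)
Your plan is sound in outline and would work, but it takes a genuinely different and more laborious route than the paper. The paper bypasses the entire linearization of the structure equations by quoting an explicit formula of Reiter--Son for the Chern tensor of a strictly pseudoconvex real hypersurface in $\mathbb{C}^{n+1}$: with the frame $Z_\alpha^t=\partial_{z^\alpha}-\tfrac{(\rho_t)_\alpha}{(\rho_t)_w}\partial_w$, one has $(S^{\theta_t})_{\alpha\bar\beta\gamma\bar\sigma}=\tf\bigl(\mR_{\alpha\bar\beta\gamma\bar\sigma}(\rho_t)+\text{quadratic terms}\bigr)$, where $\mR$ is the $(1,1,1,1)$-part of the ambient fourth-order Hessian of $\rho_t$ and every ``quadratic term'' is a product of factors such as $D_{\alpha\gamma}(\rho_{\bar k})$ or $h_{\alpha\gamma}$ that vanish identically at the sphere. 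Since $S^{\theta_0}=0$ kills the frame-variation contribution exactly as you observe, differentiating at $t=0$ collapses immediately to $\tf\,\mR_{\alpha\bar\beta\gamma\bar\sigma}(\dot\rho)$, i.e.\ the claimed $\tf(\dot\rho)_{\alpha\bar\beta\gamma\bar\sigma}$.

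What this buys: the Reiter--Son formula has already done the work of solving the structure equations and subtracting the Schouten traces, so the cancellations you anticipate in your third paragraph (lower-order trace terms against the Schouten subtractions) are prepackaged. Your approach, by contrast, computes $\dot\omega_\alpha{}^\beta$, $\dot A_{\alpha\beta}$, $\dot R_{\alpha\bar\beta\gamma\bar\sigma}$, and $\dot P_{\alpha\bar\beta}$ from first principles and then assembles $\dot S$; this is perfectly valid but involves substantially more bookkeeping, and the ``delicate point'' you flag about trace cancellations is exactly where errors are likely to creep in. Your closing naturality argument (that $\dot S$ must be the unique CR-covariant fourth-order operator with the right symbol) is a reasonable sanity check, but as written it is not quite a proof on its own without a classification of such operators on the sphere.
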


\begin{remark}
 The existence of diffeomorphisms $F_t\colon\bC^{n+1}\to\bC^{n+1}$ as in the statement of \cref{lem:linearize_chern} is guaranteed by~\cite[Lemma~4.1]{HirachiMarugameMatsumoto2015}.  Note that the restriction $F_t\colon M_0\to M_t$ is a contact diffeomorphism.
\end{remark}

\begin{proof}
 Fix $p\in M_0$.  By permuting coordinates if necessary, we may assume that $(w,z)\in\bC\times\bC^n=\bC^{n+1}$ are such that $(\rho_t)_w:=\frac{\partial\rho_t}{\partial w}$ is nowhere zero in a neighborhood of $(0,p)$ in $\bR\times\bC^{n+1}$.  Consider the frame $Z_\alpha^t:=\partial_{z^\alpha}-\frac{(\rho_t)_\alpha}{(\rho_t)_w}\partial_w$ of $T_t^{1,0}$ near $F_t(p)$.  Applying~\cite[Theorem~3.1]{ReiterSon2019} yields 
 \begin{multline}
  \label{eqn:reiter_son_formula}
  (S^{\theta_t})_{\alpha\bar\beta\gamma\bar\sigma} = \tf \biggl( \mR_{\alpha\bar\beta\gamma\bar\sigma}(\rho_t) + h^{j\bar k}D_{\alpha\gamma}(\rho_{\bar k})D_{\bar\beta\bar\sigma}(\rho_j) \\ + h_{\bar\beta\bar\sigma}\xi^{\bar k}D_{\alpha\gamma}(\rho_{\bar k}) + h_{\alpha\gamma}\xi^jD_{\bar\beta\bar\sigma}(\rho_j) - \lv\xi\rv^2 h_{\alpha\gamma}h_{\bar\beta\bar\sigma}\biggr) ,
 \end{multline}
 where $\xi=\xi(t)$ is the unique $(1,0)$-vector field in $\bC^{n+1}$ such that $\partial\rho_t(\xi)=1$ and $\xi\contr i\partial\overline{\partial}\rho_t\equiv0\mod\overline{\partial}\rho_t$,
 \begin{align*}
  \mR_{\alpha\bar\beta\gamma\bar\sigma}(\phi) & := \phi_{Z\bar ZZ\bar Z}(Z_\alpha,Z_{\bar\beta},Z_\gamma,Z_{\bar\sigma}), \\
  h_{\alpha\bar\beta} & := D_{\alpha\bar\beta}(\rho_t), \\
  D_{\alpha\bar\beta}(\phi) & := \phi_{Z\bar Z}(Z_\alpha,Z_{\bar\beta}), \\
  D_{\alpha\gamma}(\phi) & := \phi_{ZZ}(Z_\alpha,Z_\gamma) ,
 \end{align*}
 for all complex-valued smooth functions $\phi$ on $\bC^{n+1}$, and $\phi_{Z\bar Z}$ denotes the $(1,1)$-part of the Hessian of $\phi$, with similar interpretation of $\phi_{ZZ}$ and $\phi_{Z\bar ZZ\bar Z}$.  (The apparent change of sign from~\cite{ReiterSon2019} is because we take our defining function to be positive in the domain bounded by $\rho_t^{-1}(0)$.)  We emphasize that we regard \cref{eqn:reiter_son_formula} as defining a set of smooth functions determined by the frame $\{Z^t_\alpha\}$ of $T_t^{1,0}$ and its conjugate.  By definition,
 \[ S^t = \bigl( (S^{\theta_t})_{\alpha\bar\beta\gamma\bar\sigma}\circ F_t\bigr) \bigl((F_t^\ast\theta_t^\alpha)\wedge(F_t^\ast\theta_t^{\bar\beta})\bigr)\otimes \bigl((F_t^\ast\theta_t^\gamma)\wedge(F_t^\ast\theta_t^{\bar\sigma})\bigr) , \]
 where $\{\theta_t^\alpha\}$ is the admissible coframe of $T_t^{\ast(1,0)}$ dual to $\{Z_\alpha^t\}$.  Since $S^{\theta_0}=0$, we see that
 \[ \left.\frac{\partial}{\partial t}\right|_{t=0}S^t = \left.\frac{\partial}{\partial t}\right|_{t=0} \bigl((S^{\theta_t})_{\alpha\bar\beta\gamma\bar\sigma}\circ F_t\bigr)\,(\theta_0^\alpha\wedge\theta_0^{\bar\beta})\otimes(\theta_0^\gamma\otimes\theta_0^{\bar\sigma}) . \]
 Combining this with \cref{eqn:reiter_son_formula} and the facts $h_{\alpha\gamma}=h_{\bar\beta\bar\sigma}=0$ and $D_{\alpha\gamma}(\rho_{\bar k}) = D_{\bar\beta\bar\sigma}(\rho_j)=0$ at $t=0$ yields \cref{eqn:linearize_chern}.
\end{proof}

We prove \cref{thm:counterexample-via-perturb-intro} by applying \cref{lem:linearize_chern} to the specific family
\begin{equation}
 \label{eqn:ellipsoid_deformation}
 \rho_t := 1 - \lv z\rv^2 - \lv w\rv^2 + \frac{t}{4}\lv w\rv^4
\end{equation}
of defining functions, where $(w,z)\in\bC\times\bC^{n}$.  In fact, we prove the following sharper result:

\begin{theorem}
 \label{thm:hirachi_ellipsoid}
 Let $\varsigma=(\varsigma_1,\varsigma_2,\varsigma_3,\dotsc,\varsigma_n)\in\bN^n$ be such that $\varsigma_1=0$ and $n=\sum_{k=1}^n k\varsigma_k$.  For $t$ sufficiently close to zero, consider the pseudohermitian manifolds $(M_t,T_t^{1,0},\theta_t)$ and contact diffeomorphisms $F_t\colon M_0\to M_t$ as in \cref{lem:linearize_chern}.  Let $\Phi=\Phi(\varsigma)$ be as in \cref{eqn:Phi_varsigma}.  Then
 \begin{equation}
  \label{eqn:hirachi_ellipsoid_subcritical_derivative}
  \left. \frac{\partial^k}{\partial t^k}\right|_{t=0} \Real F_t^\ast \left(\nabla^\alpha X_\alpha^\Phi\right)^{\theta_t} = 0
 \end{equation}
 for all nonnegative integers $k<n$ and
 \begin{equation}
  \label{eqn:hirachi_ellipsoid_derivative}
  \left. \frac{\partial^n}{\partial t^n}\right|_{t=0} \Real F_t^\ast\left(\nabla^\alpha X_\alpha^\Phi\right)^{\theta_t} \not= 0 .
 \end{equation}
 In particular, for all $t\not=0$ sufficiently close to zero, it holds that $\Real (\nabla^\alpha X_\alpha^\Phi)^{\theta_t} \not= 0$.
\end{theorem}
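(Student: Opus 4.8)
The plan is to exploit the fact that every term of $X_\alpha^\Phi$ is, schematically, a product of exactly $n$ factors drawn from the Chern tensor $S$ and its covariant derivatives, all of which vanish on the round sphere. Indeed, $(\mS^\Phi)_\alpha{}^\beta{}_\mu{}^\nu$ is built from $n-1$ copies of $S$, while $V_\beta{}^\mu{}_\nu$ is a single covariant divergence of $S$ by~\cref{eqn:divS}, so the first term of~\cref{eqn:Xn} carries $n$ such factors; likewise $c_\Phi(S)$ is built from $n$ copies of $S$, and $\nabla_\alpha c_\Phi(S)$ is again of this shape. On the round sphere $M_0$ we have $S^{\theta_0}=0$, hence also $A_{\alpha\beta}^{\theta_0}=0$ and $V^{\theta_0}=0$, so \cref{lem:linearize_chern} gives $S^t=t\dot S+O(t^2)$, and then $V^t$ is likewise $O(t)$ with leading coefficient the divergence $\tfrac{i}{n}\nabla^{\bar\sigma}\dot S_{\alpha\bar\beta\gamma\bar\sigma}$ computed with the fixed round-sphere Tanaka--Webster connection. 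Therefore $F_t^\ast(X_\alpha^\Phi)^{\theta_t}=O(t^n)$, and the same holds for its divergence.

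This immediately yields~\cref{eqn:hirachi_ellipsoid_subcritical_derivative}, since the first $n-1$ derivatives in $t$ then vanish at $t=0$. It also identifies the $n$-th derivative: because each factor is $O(t)$ and the $t$-corrections to the connection contribute only at order $t^{n+1}$ (they multiply an expression already of order $t^n$), the coefficient of $t^n$ is obtained by replacing $S$ with $\dot S$ and $V$ with $\tfrac{i}{n}\nabla^{\bar\sigma}\dot S$ throughout, using the round-sphere connection. Thus
\begin{equation*}
 \left.\frac{\partial^n}{\partial t^n}\right|_{t=0}\Real F_t^\ast(\nabla^\alpha X_\alpha^\Phi)^{\theta_t} = n!\,\Real\nabla^\alpha\bigl[X_\alpha^\Phi\bigr]_{S\to\dot S},
\end{equation*}
where the right-hand side is a single explicit expression on $(M_0,T_0^{1,0},\theta_0)$. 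The problem is thereby reduced from a genuine one-parameter family to one fixed tensor computation on the round sphere with the prescribed variation $\dot S=\tf\dot\rho$, $\dot\rho=\tfrac14\lv w\rv^4$.

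To establish~\cref{eqn:hirachi_ellipsoid_derivative} I would first exploit symmetry: the defining functions~\cref{eqn:ellipsoid_deformation} are invariant under the obvious $U(n)\times U(1)$ action on $(z,w)\in\bC^n\times\bC$, and so is $\dot\rho$; hence $\Real\nabla^\alpha[X_\alpha^\Phi]_{S\to\dot S}$ is an invariant function on $S^{2n+1}$ and depends only on $s:=\lv w\rv^2$. It therefore suffices to show that this function of $s$ is not identically zero, which can be verified by evaluating the scalar at a single convenient point (one with $z\neq0$, where $\dot S\neq0$) rather than on all of $M_0$. Next I would compute $\dot S=\tf\dot\rho$ in the frame $Z_\alpha=\partial_{z^\alpha}-\tfrac{\overline{z}^\alpha}{\overline{w}}\partial_w$ of~\cref{lem:linearize_chern}; since $\dot\rho$ depends only on $w$, the relevant coordinate derivatives of $\dot\rho$ are easily tabulated, and the frame contractions produce an explicit tensor with entries that are monomials in $z,\overline{z},w,\overline{w}$. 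Inserting $n$ copies of $\dot S$ (and the divergence $\nabla^{\bar\sigma}\dot S$) into the $\delta\otimes\Phi$ contraction defining $X_\alpha^\Phi$ and applying $\nabla^\alpha$ then produces the desired scalar.

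The main obstacle is this last tensor-algebra computation: the generalized Kronecker delta antisymmetrizes the outer indices while $\Phi(\varsigma)$ imposes the power-sum pattern $\prod_k(\tr A^k)^{\varsigma_k}$ on the inner indices, and one must track these contractions against an explicit, non-diagonal $\dot S$ through one covariant divergence. Here the hypothesis $\varsigma_1=0$ is essential: a factor of $\tr A$ would force a single trace of the trace-free Chern tensor and annihilate the invariant outright, whereas the factors associated with $k\geq2$ survive. I expect the nonvanishing to reduce, after diagonalizing the relevant Chern endomorphism at the chosen point, to a product of nonzero scalars of the same shape as the factor $\prod_{k=1}^{n}[(n+2)(1-(n+2)^{k-1})]^{\varsigma_k}$ appearing in~\cref{thm:I-prime-curvature-for-Reinhardt-domains-intro}, each factor of which is nonzero precisely because $k\geq2$. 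Confirming that this product is genuinely nonzero, and that the covariant divergence does not accidentally annihilate it as a function of $s$, is the crux of the argument.
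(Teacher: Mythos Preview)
Your reduction is correct and coincides with the paper's: since $X_\alpha^\Phi$ is homogeneous of degree $n$ in $S$ (and covariant derivatives thereof) and $S^{\theta_0}=0$, one has $F_t^\ast X_\alpha^\Phi=O(t^n)$, the $t$-dependence of the connection enters only at order $t^{n+1}$, and the $n$-th derivative at $t=0$ is $n!$ times the expression obtained by substituting $\dot S$ for $S$ and computing with the round-sphere connection. This is exactly how the paper obtains \cref{eqn:hirachi_ellipsoid_subcritical_derivative} and sets up \cref{eqn:hirachi_ellipsoid_derivative}.

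However, you stop precisely where the paper's proof begins in earnest. The paper does \emph{not} evaluate at a point or diagonalize an endomorphism; instead it uses a device that you are missing and that is the real content of the argument. One packages $\dot S$ and $\dot V$ as $\End(T^{1,0})$-valued forms
\[
 \dot\mC_\alpha{}^\beta := i\dot S_\alpha{}^\beta{}_\mu{}^\nu\,\theta^\mu\wedge\theta_\nu,\qquad
 \dot\mV_\alpha{}^\beta := \dot V_\alpha{}^\beta{}_\mu\,\theta^\mu,
\]
so that the generalized Kronecker delta becomes the wedge product and $\tr A^k$ in $\Phi(\varsigma)$ becomes the literal trace of the $k$-th wedge-power $(\dot\mC^k)_\alpha{}^\alpha$. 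Because $\dot S_{\alpha\bar\beta\gamma\bar\sigma}=\tf(w_\alpha\ow_{\bar\beta}w_\gamma\ow_{\bar\sigma})$ has rank one up to trace corrections, $\dot\mC_\alpha{}^\beta$ decomposes into a few elementary pieces whose mutual wedge products obey simple closed recursions; these yield an explicit formula for $\tr\dot\mC^k$ and for $(\dot\mC^{k-1})_\alpha{}^\beta\dot\mV_\beta{}^\alpha$, from which both $\partial_t^n c_\Phi(S^t)$ and $\partial_t^n\bigl[i(\mS^\Phi)_\alpha{}^\beta{}_\mu{}^\nu V_\beta{}^\mu{}_\nu\bigr]$ are read off as \emph{global} functions on $S^{2n+1}$.

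The outcome is the closed formula
\[
 \frac{1}{(n!)^2}\left.\partial_t^n\right|_{t=0} F_t^\ast X_\alpha^\Phi
 = \frac{3}{n}\Bigl(\tfrac{n+1}{n+2}\Bigr)^{n} p(\varsigma)\,c^{2n-1}\ow\,w_\alpha,
 \qquad p(\varsigma)=\prod_{k=2}^n\bigl(n+(-n)^k\bigr)^{\varsigma_k},
\]
with $c=-\tfrac{1}{n+1}w_\gamma\ow^\gamma$, and then a short computation using $w_{\alpha\bar\beta}=-wh_{\alpha\bar\beta}$ gives a manifestly nonzero divergence. Note that your guessed constant $\prod_k[(n+2)(1-(n+2)^{k-1})]^{\varsigma_k}$ from the Reinhardt example is \emph{not} what appears here; the correct factor $p(\varsigma)$ is nonzero for $k\ge2$ because $n+(-n)^k\ne0$, and this is where $\varsigma_1=0$ is used. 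Without the differential-form trick your proposed pointwise computation, while not wrong in principle, gives no mechanism for handling the antisymmetrization in $\delta_{\alpha_1\cdots\alpha_n}^{\beta_1\cdots\beta_n}$ against $n$ copies of a trace-corrected rank-one tensor, and you have not shown how to close that gap.
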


The proof of \cref{thm:hirachi_ellipsoid} only requires that $\rho_0$ is the defining function of the round $(2n+1)$-sphere and the formula for $\left.\frac{\partial}{\partial t}\right|_{t=0}\rho_t$.  In particular, the conclusion of \cref{thm:hirachi_ellipsoid} also holds for some of the ellipsoids considered by Reiter and Son~\cite{ReiterSon2019}; see \cref{rk:reiter_son} for further discussion.

\begin{proof}[Proof of \cref{thm:hirachi_ellipsoid}]
 Let $\rho_t$ be given by \cref{eqn:ellipsoid_deformation}. 
 Let $(S^t)_{\alpha\bar\beta\gamma\bar\sigma}$ denote the pullback of the Chern tensor of $(M_t,T_t^{1,0},\theta_t)$ by $F_t$.  Since $\rho_0$ is the defining function of the round CR $(2n+1)$-sphere, it holds that $(S^0)_{\alpha\bar\beta\gamma\bar\sigma}=0$.  This yields \cref{eqn:hirachi_ellipsoid_subcritical_derivative}.
 
 Recall that, on the round CR $(2n+1)$-sphere,
 \begin{equation}
  \label{eqn:c-to-w}
  \begin{split}
   w_{\alpha\beta} & = 0, \\
   w_{\alpha\bar\beta} & = -wh_{\alpha\bar\beta}, \\
   w_{\bar\beta} & = 0 , \\
   w^\gamma w_\gamma & = 1-\lv w\rv^2.
  \end{split}
 \end{equation}
 (One can deduce these formulae using the fact that $\frac{1}{\lv 1+w\rv^2}\theta$ on $S^{2n+1}\setminus\{w=-1\}$ equals the pullback of the standard contact form on the Heisenberg group under Cayley transform~\cite{JerisonLee1987} and the transformation laws~\cite{Lee1988} for the pseudohermitian curvature and torsion.) 
 Since $\dot\rho_t = \frac{1}{4}\lv w\rv^4$, we conclude from \cref{lem:linearize_chern} that
 \begin{equation}
  \label{eqn:perturb_S}
  \dot S_{\alpha\bar\beta\gamma\bar\sigma}  = \tf w_\alpha \ow_{\bar\beta} w_\gamma \ow_{\bar\sigma} .
 \end{equation}
 (We emphasize that $\alpha,\bar\beta,\gamma,\bar\sigma$ are abstract indices in this formula.)  \Cref{eqn:divS} then implies that
 \begin{equation}
  \label{eqn:perturb_V}
  \dot V_{\alpha\bar\beta\gamma} := \biggl(\left.\frac{\partial}{\partial t}\right|_{t=0} V^t\biggr)_{\alpha\bar\beta\gamma} = -\frac{n+3}{n+2}i\bar w\tf w_\alpha \ow_{\bar\beta} w_\gamma ,
 \end{equation}
 where $V^t:=F_t^\ast V^{\theta_t}$ and
 \[ \tf u_{\alpha\bar\beta\gamma} := u_{\alpha\bar\beta\gamma} - \frac{1}{n+1}\left(u_\mu{}^\mu{}_\alpha h_{\gamma\bar\beta} + u_\mu{}^\mu{}_\gamma h_{\alpha\bar\beta} \right) . \]

 Define
 \begin{align*}
  \dot\mC_\alpha{}^\beta & := i\dot S_\alpha{}^\beta{}_\mu{}^\nu \theta^\mu\theta_\nu, \\
  \dot\mV_\alpha{}^\beta & := \dot V_\alpha{}^\beta{}_\mu \theta^\mu ,
 \end{align*}
 where products are taken in the exterior algebra $\Lambda^\bullet S^{2n+1}$.  It follows from \cref{eqn:perturb_S} and \cref{eqn:perturb_V} that
 \begin{align}
  \label{eqn:Sform} \dot\mC_\alpha{}^\beta & = W_\alpha{}^\beta +  \frac{n+1}{n+2}c\left( \Psi_\alpha{}^\beta + M_\alpha{}^\beta + \delta_\alpha^\beta\bigl(i(\db w)(\dbbar\ow) + c\,d\theta\bigr)\right) , \\
  \label{eqn:Vform} \dot\mV_\alpha{}^\beta & = -\frac{n+3}{n+2}i\ow\left( (w_\alpha \ow^\beta + c\delta_\alpha^\beta)\db w + cw_\alpha\theta^\beta \right) ,
 \end{align}
 respectively, where
 \begin{align*}
  W_\alpha{}^\beta & := iw_\alpha \ow^\beta (\db w)(\dbbar\ow), \\
 \Psi_\alpha{}^\beta & := w_\alpha \ow^\beta d\theta + iw_\alpha\theta^\beta(\dbbar\ow) + i\ow^\beta(\db w)\theta_\alpha, \\
  M_\alpha{}^\beta & := ic\theta^\beta\theta_\alpha, \\
  c & := -\frac{1}{n+1}w_\gamma\ow^\gamma . 
 \end{align*}
 We break the computation into four steps.

 \begin{step}
  Compute powers of $\dot\mC_\alpha{}^\beta$. 
 \end{step}
 
 Observe that
 \begin{align*}
  \Psi_\alpha{}^\gamma\Psi_\gamma{}^\beta & = W_\alpha{}^\beta d\theta - (n+1)c\Psi_\alpha{}^\beta d\theta + (n+1)iM_\alpha{}^\beta(\db w)(\dbbar\ow) , \\
  \Psi_\alpha{}^\gamma M_\gamma{}^\beta & = M_\alpha{}^\gamma\Psi_\gamma{}^\beta = -iM_\alpha{}^\beta(\db w)(\dbbar\ow) , \\
  M_\alpha{}^\gamma M_\gamma{}^\beta & = -cM_\alpha{}^\beta d\theta .
 \end{align*}
 Combining this with the identities
 \begin{align*}
  W_\alpha{}^\gamma W_\gamma{}^\beta & = 0, \\
  W_\alpha{}^\gamma \Psi_\gamma{}^\beta & = \Psi_\alpha{}^\gamma W_\gamma{}^\beta = -(n+1)cA_\alpha{}^\beta d\theta, \\
  W_\alpha{}^\gamma M_\gamma{}^\beta & = M_\alpha{}^\gamma W_\gamma{}^\beta = 0 , \\
  W_\alpha{}^\beta(\db w) & = W_\alpha{}^\beta(\dbbar\ow)  = 0
 \end{align*}
 yields
 \begin{align*}
  (\Psi^k)_\alpha{}^\beta & = \left(-c\right)^{k-2}\Bigl[ (k-1)(n+1)^{k-2}W_\alpha{}^\beta d\theta \\
   & \qquad\qquad - (n+1)^{k-1}c\Psi_\alpha{}^\beta d\theta + (n+1)^{k-1}iM_\alpha{}^\beta (\db w)(\dbbar\ow) \Bigr] d\theta^{k-2}, \\
  (M^k)_\alpha{}^\beta & = (-c)^{k-1}M_\alpha{}^\beta d\theta^{k-1}
 \end{align*}
 for all $k\geq2$, where $(\Psi^k)_\alpha{}^\beta:=\Psi_\alpha{}^{\gamma_2}\Psi_{\gamma_2}{}^{\gamma_3}\dotsm\Psi_{\gamma_k}{}^\beta$.  It follows that
 \[ (M^j)_\alpha{}^\gamma (\Psi^k)_\gamma{}^\beta = 0 \]
 for all $j\geq1$ and $k\geq2$.  In particular,
 \begin{equation}
  \label{eqn:Psi+Mk}
  \begin{split}
   \bigl((\Psi+M)^k\bigr)_\alpha{}^\beta & = (\Psi^k)_\alpha{}^\beta + k\Psi_\alpha{}^\gamma (M^{k-1})_\gamma{}^\beta + (M^k)_\alpha{}^\beta \\
   & = (-c)^{k-2}\Bigl[ (k-1)(n+1)^{k-2}W_\alpha{}^\beta d\theta - (n+1)^{k-1}c\Psi_\alpha{}^\beta d\theta  \\
    & \qquad - cM_\alpha{}^\beta d\theta + \left((n+1)^{k-1}-k\right)iM_\alpha{}^\beta(\db w)(\dbbar\ow) \Bigr] d\theta^{k-2}
   \end{split}
 \end{equation}
 for all $k\geq1$.  Define
 \[ \mP_\alpha{}^\beta := \Psi_\alpha{}^\beta + M_\alpha{}^\beta + \delta_\alpha^\beta\left(i(\db w)(\dbbar\ow) + c\,d\theta\right) . \]
 Observe that
 \[ (\mP^k)_\alpha{}^\beta = \sum_{j=0}^k \binom{k}{j}c^{k-j-1}\bigl((\Psi+M)^j\bigr)_\alpha{}^\beta\bigl(c\,d\theta + (k-j)i(\db w)(\dbbar\ow)\bigr)d\theta^{k-j-1} ; \]
 as $\left(c\,d\theta+(k-j)i(\db w)(\dbbar\ow)\right)d\theta^{k-j-1}=c\,d\theta^{k-j} + (k-j)i(\db w)(\dbbar\ow)d\theta^{k-j-1}$, we interpret this factor as multiplication by the scalar function $c$ when $j=k$ in the summation.  Since $i\Psi_\alpha{}^\beta(\db w)(\dbbar\ow) = W_\alpha{}^\beta d\theta$, we conclude from \cref{eqn:Psi+Mk} that
 \begin{align*}
  (\mP^k)_\alpha{}^\beta & = c^{k-1}\delta_\alpha^\beta\bigl(c\,d\theta + ki(\db w)(\dbbar\ow)\bigr)\,d\theta^{k-1} \\
   & \quad + \sum_{j=1}^k (-1)^{j-1}\binom{k}{j}c^{k-1}\bigl[ (n+1)^{j-1}\Psi_\alpha{}^\beta + M_\alpha{}^\beta \bigr] d\theta^{k-1} \\
   & \quad + i\sum_{j=1}^k (-1)^j\binom{k}{j}c^{k-2}\left((n+1)^{j-1}-k\right)M_\alpha{}^\beta(\db w)(\dbbar\ow)d\theta^{k-2} \\
   & \quad + \sum_{j=1}^k(-1)^j\binom{k}{j}(n+1)^{j-2}c^{k-2}\left((n+2)j - 1 - k(n+1)\right)W_\alpha{}^\beta d\theta^{k-1}
 \end{align*}
 for all $k\geq 1$, where we adopt the convention $d\theta^{-1}:=0$. 
 Evaluating the summations yields
 \begin{equation}
  \label{eqn:mPk}
  \begin{split}
   (\mP^k)_\alpha{}^\beta & = c^{k-2}\Bigl[ c^2\delta_\alpha^\beta d\theta^2 + ikc\delta_\alpha^\beta(\db w)(\dbbar\ow)d\theta - \frac{(-n)^k-1}{n+1}c\Psi_\alpha{}^\beta d\theta \\
    & \qquad + cM_\alpha{}^\beta d\theta + \frac{(-n)^k-1+k(n+1)}{n+1}iM_\alpha{}^\beta(\db w)(\dbbar\ow) \\
    & \qquad + \left(\frac{k\left(1-2(-n)^{k-1}\right)}{n+1} + \frac{1-(-n)^k}{(n+1)^2}\right)W_\alpha{}^\beta d\theta \Bigr] d\theta^{k-2}
  \end{split}
 \end{equation}
 for all $k\geq 1$, where we distribute the multiplication by $d\theta^{k-2}$ and use our convention $d\theta^{-1}=0$ to make sense of the case $k=1$.
 
 Finally, since $W_\alpha{}^\beta$ and $\mP_\alpha{}^\beta$ commute, we have that
 \[ (\dot\mC^k)_\alpha{}^\beta = \left(\frac{n+1}{n+2}c\right)^{k-1}\left[ \frac{n+1}{n+2}c(\mP^k)_\alpha{}^\beta + kW_\alpha{}^\gamma(\mP^{k-1})_\gamma{}^\beta \right] . \]
 Combining this with \cref{eqn:mPk} yields
 \begin{equation}
  \label{eqn:mSk}
  \begin{split}
  (\dot\mC^k)_\alpha{}^\beta & = \left(\frac{n+1}{n+2}\right)^{k-1}c^{2k-2}\Bigl[ \frac{n+1}{n+2}c(c\delta_\alpha^\beta d\theta + M_\alpha{}^\beta) d\theta \\
   & \qquad + \frac{n+1}{n+2}kic\delta_\alpha^\beta(\db w)(\dbbar\ow)d\theta - \frac{(-n)^k-1}{n+2}c\Psi_\alpha{}^\beta d\theta \\
   & \qquad + \frac{(-n)^k-1+k(n+1)}{n+2}iM_\alpha{}^\beta(\db w)(\dbbar\ow) \\
   & \qquad + \frac{\left(k(n+1)+1\right)\left(1-(-n)^k\right)}{(n+1)(n+2)}W_\alpha{}^\beta d\theta \Bigr] d\theta^{k-2}
  \end{split}
 \end{equation}
 for all $k\geq1$.  Using the facts
 \begin{align*}
  W_\gamma{}^\gamma & = -(n+1)ic(\db w)(\dbbar\ow), \\
  M_\gamma{}^\gamma & = c\,d\theta , \\
  \Psi_\gamma{}^\gamma & = -(n+1)c\,d\theta + 2i(\db w)(\dbbar\ow),
 \end{align*}
 we conclude that
 \begin{equation}
  \label{eqn:tr_mSk}
  \tr\dot\mC^k = \left(\frac{n+1}{n+2}\right)^k\left(n+(-n)^{k}\right)c^{2k-1} \left( c\,d\theta + ki(\db w)(\dbbar\ow) \right) d\theta^{k-1}
 \end{equation}
 for all $k\geq1$.

 \begin{step}
  Compute derivatives of $c_\Phi(S)$ and $\nabla_\alpha c_\Phi(S)$.
 \end{step}

 Recall that
 \[ \frac{1}{n!}c_\Phi(S^t)d\theta^n = c_\Phi\left(i(S^t)_\alpha{}^\beta{}_{\mu\bar\nu}\theta^\mu\theta^{\bar\nu}\right) . \]
 It follows that that
 \[ \frac{1}{(n!)^2}\left.\frac{\partial^n}{\partial t^n}\right|_{t=0} c_\Phi(S^t)\,d\theta^n = c_\Phi(\dot\mC_\alpha{}^\beta) . \]
 Using our assumption $\Phi=\Phi(\varsigma)$, $\varsigma_1=0$, we have that
 \[ c_\Phi(\dot\mC_\alpha{}^\beta) = \prod_{k=2}^n(\tr\dot\mC^k)^{\varsigma_k} . \]
 Using~\eqref{eqn:tr_mSk} and the fact $ni(\db w)(\dbbar\ow)d\theta^{n-1}=-(n+1)c\,d\theta^n$, we deduce that
 \[ c_\Phi(\dot\mC_\alpha{}^\beta) = -n\left(\frac{n+1}{n+2}\right)^np(\varsigma)c^{2n}\,d\theta^n , \]
 where
 \begin{equation}
  \label{eqn:prod_varsigma}
  p(\varsigma) := \prod_{k=2}^n \left(n+(-n)^k\right)^{\varsigma_k} .
 \end{equation}
 Since $\nabla_\alpha c^{2n} = \frac{2n}{n+1}c^{2n-1}\ow w_\alpha$, we deduce that
 \begin{equation}
  \label{eqn:nabla_cPhiS}
  \frac{1}{(n!)^2}\left.\frac{\partial^n}{\partial t^n}\right|_{t=0}\nabla_\alpha c_\Phi(S^t) = -\frac{2n^2}{n+1}\left(\frac{n+1}{n+2}\right)^{n}p(\varsigma)c^{2n-1}\ow w_\alpha .
 \end{equation}
 
 \begin{step}
  Compute $i(\dot\mC^\Phi)_\alpha{}^\beta{}_\mu{}^\nu \dot V_\beta{}^\mu{}_\nu$.
 \end{step}

 Observe that
 \begin{align*}
  \dot\mV_\alpha{}^\alpha & = 0 , \\
  W_\alpha{}^\beta \dot\mV_\beta{}^\alpha & = 0, \\
  M_\alpha{}^\beta \dot\mV_\beta{}^\alpha & = 0, \\
  \Psi_\alpha{}^\beta \dot\mV_\beta{}^\alpha & = -\frac{n(n+1)(n+3)}{n+2}ic^2\ow(\db w)d\theta .
 \end{align*}
 Combining this with \cref{eqn:mSk} yields
 \begin{multline}
  \label{eqn:mSkmV}
  \tr \dot\mC^{k-1}\dot\mV := (\dot\mC^{k-1})_\alpha{}^\beta\dot\mV_\beta{}^\alpha \\ = -\left(\frac{n+1}{n+2}\right)^{k}\frac{(n+3)\bigl(n+(-n)^{k}\bigr)}{n+1}ic^{2k-1}\ow(\db w)d\theta^{k-1}
 \end{multline}
 for all $k\geq1$.

 We now compute the derivatives in $t$ of $i(\mS^\Phi)_\alpha{}^\beta{}_\mu{}^\nu V_\beta{}^\mu{}_\nu$ using the fact that
 \[ \frac{1}{(n-1)!}i(\mS^\Phi)_\alpha{}^\beta{}_\mu{}^\nu V_\beta{}^\mu{}_\nu \theta^\alpha d\theta^{n-1} = ic_{\Phi,n-1}(iS_\alpha{}^\beta{}_{\mu\bar\nu}\theta^\mu\theta^{\bar\nu},V_\alpha{}^\beta{}_\gamma\theta^\gamma) \]
 for all $t$, where 
 \[ c_{\Phi,n-1}(Y_\alpha{}^\beta,Z_\alpha{}^\beta) := \Phi_{\alpha_1\dotsm\alpha_n}^{\beta_1\dotsm\beta_n}Y_{\beta_1}{}^{\alpha_1}\dotsm Y_{\beta_{n-1}}{}^{\alpha_{n-1}}Z_{\beta_n}{}^{\alpha_n} \]
 for all invariant polynomials $\Phi$ of degree $n$, all $\End(T^{1,0})$-valued two-forms $Y_\alpha{}^\beta$, and all $\End(T^{1,0})$-valued one-forms $Z_\alpha{}^\beta$.  Note that if $\Phi=\Phi(\varsigma)$, then
 \[ c_{\Phi,n-1}(Y,Z) = \frac{1}{n}\sum_{k=2}^n \left[ k\varsigma_k(\tr Y^k)^{\varsigma_k-1}(\tr Y^{k-1}Z)\prod_{j\not=k}(\tr Y^j)^{\varsigma_j} \right] . \]
 Using \cref{eqn:tr_mSk,eqn:mSkmV} and our assumption $\Phi=\Phi(\varsigma)$, we compute that 
 \begin{align*}
  ic_{\Phi,n-1}(\dot\mC_\alpha{}^\beta,\dot\mV_\alpha{}^\beta) = \frac{n+3}{n+1}\left(\frac{n+1}{n+2}\right)^{n}p(\varsigma)c^{2n-1}\ow(\db w)d\theta^{n-1} .
 \end{align*}
 In particular,
 \begin{equation}
  \label{eqn:SV}
  \frac{i}{(n-1)!n!}\left.\frac{\partial^n}{\partial t^n}\right|_{t=0}(\mS_t^\Phi)_\alpha{}^\beta{}_\mu{}^\nu (V^t)_\beta{}^\mu{}_\nu = \frac{n+3}{n+1}\left(\frac{n+1}{n+2}\right)^{n}p(\varsigma)c^{2n-1}\ow w_\alpha .
 \end{equation}
 
 \begin{step}
  Compute derivatives of $X_\alpha^\Phi$ and $\Real\nabla^\alpha X_\alpha^\Phi$.
 \end{step}

 We now compute $X_\alpha^\Phi$ for $\Phi=\Phi(\varsigma)$.  \cref{eqn:nabla_cPhiS,eqn:SV} imply that
 \begin{equation}
  \label{eqn:time-derivative-of-X}
  \frac{1}{(n!)^2}\left.\frac{\partial^n}{\partial t^n}\right|_{t=0}F_t^\ast (X_\alpha^\Phi)^{\theta_t} = \frac{3}{n}\left(\frac{n+1}{n+2}\right)^{n}p(\varsigma)c^{2n-1} \ow w_\alpha .
 \end{equation}
 Using \cref{eqn:c-to-w}, we conclude that
 \[ \frac{1}{(n!)^2}\left.\frac{\partial^n}{\partial t^n}\right|_{t=0}F_t^\ast\left(\nabla^\alpha X_\alpha^\Phi\right)^{\theta_t} = -\frac{3}{n}\left(\frac{n+1}{n+2}\right)^n p(\varsigma)c^{2n-1}\left(3n(n+1)c+3n-1\right) . \]
 Since $p(\varsigma)\not=0$, we conclude that \cref{eqn:hirachi_ellipsoid_derivative} holds.
\end{proof}

\begin{remark}
 \label{rk:reiter_son}
 Reiter and Son~\cite[Equation (4.4)]{ReiterSon2019} computed the Chern tensor of the real ellipsoids
 \[ \Omega_s = \left\{ (w,z) \in \bC\times\bC^n \suchthat 1 - \lv z\rv^2 - \lv w\rv^2 - s\Real w^2 > 0 \right\} \]
 with respect to the unique pseudohermitian structure which is volume-normalized with respect to $dw\wedge dz^1\wedge\dotsm\wedge dz^n\rv_{\Omega_s}$.  Their computation shows that
 \begin{align*}
  \left. \frac{\partial}{\partial s}\right|_{s=0} F_s^\ast (S^{\theta_s})_{\alpha\bar\beta\gamma\bar\sigma} & = 0 , \\
  \left. \frac{\partial^2}{\partial s^2}\right|_{s=0} F_s^\ast (S^{\theta_s})_{\alpha\bar\beta\gamma\bar\sigma} & = 2\tf w_\alpha \ow_{\bar\beta} w_\gamma \ow_{\bar\sigma} ,
 \end{align*}
 where $F_s\colon\partial\Omega_0\to\partial\Omega_s$ is a one-parameter family of contact diffeomorphisms with $F_0=\Id$.  (Recall that we denote $w_\alpha=Z_\alpha w$, whereas Reiter and Son write their computation in terms of $Z_\alpha:=\partial_{z^\alpha} - \frac{(\rho_s)_\alpha}{(\rho_s)_w}\partial_w$, where $\rho_s$ is the given defining function for $\partial\Omega_s$.)  In particular, our proof of \cref{thm:hirachi_ellipsoid} shows that, for $s$ close to zero, the invariants $\mI_{\Phi(\varsigma)}^\prime$ on the real ellipsoids $\Omega_s$ give counterexamples to the Hirachi conjecture when $\varsigma_1=0$.
\end{remark}

\section{Counterexample via Calabi--Yau manifolds}
\label{sec:calabi-yau}

In this section, we prove the following result:

\begin{theorem}
\label{thm:counterexamples-via-calabi-yau}
	For $n \geq 2$,
	there exists a closed $(2n + 1)$-dimensional pseudo-Einstein manifold $(M, T^{1, 0}, \theta)$
	such that
	\begin{equation*}
		R_{\alpha \bar{\beta}}
		= 0,
		\qquad
		A_{\alpha \beta}
		= 0,
		\qquad
		\Real \nabla^{\alpha} X^{(n)}_{\alpha}
		\neq 0.
	\end{equation*}
\end{theorem}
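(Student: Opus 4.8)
The plan is to realize $M$ as a circle bundle over a projective Calabi--Yau manifold and to reduce the pointwise nonvanishing of $\Real\nabla^\alpha X_\alpha^{(n)}$ to the nonconstancy of a single curvature scalar on the base.

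First I would use the two hypotheses to collapse the definition~\cref{eqn:Xn}. Being Sasakian means $A_{\alpha\beta}=0$, so $T_\alpha=\tfrac{1}{n+2}\nabla_\alpha P$; Ricci-flatness gives $P=0$ and $P_{\alpha\bar\beta}=0$, hence $T_\alpha=0$ and therefore $V_{\alpha\bar\beta\gamma}=0$. The first term of~\cref{eqn:Xn} thus vanishes identically and
\[
 X_\alpha^{(n)} = -\frac{1}{n^2}\nabla_\alpha c_{(n)}(S),
\]
so $X_\alpha^{(n)}$ is exact and $\Real\nabla^\alpha X_\alpha^{(n)} = -\tfrac{1}{2n^2}\Delta_b c_{(n)}(S)$. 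Since $\ker\Delta_b$ is the space of locally constant functions on a closed manifold, the conclusion $\Real\nabla^\alpha X_\alpha^{(n)}\not\equiv0$ is \emph{equivalent} to $c_{(n)}(S)$ being nonconstant.

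Next I would take $M$ to be the circle bundle associated with $(Y,L,h)$ of~\cref{sec:bg}, where $Y$ is a projective Calabi--Yau $n$-fold and $L$ a negative line bundle whose curvature $\omega=-i\Theta_h$ is Ricci-flat Kähler (such a representative of an integral Kähler class exists by Yau's theorem). Then $M$ is Sasakian with $\Pi_\alpha{}^\beta=p^\ast\Psi_\alpha{}^\beta$, Ricci-flatness of $\omega$ makes $M$ Ricci-flat pseudo-Einstein, and it forces $S_{\alpha\bar\beta\gamma\bar\sigma}$ to be the pullback of the full Kähler curvature. Consequently $c_{(n)}(S)=p^\ast f$ for a function $f$ on $Y$ with $f\,\dvol_Y$ a constant multiple of the top Chern form $c_n(Y,\omega)$. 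Everything therefore reduces to exhibiting, for each $n\geq2$, a projective Calabi--Yau $Y$ on which $f$ is nonconstant.

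Finally I would produce such $Y$ by combining the Gauss--Bonnet--Chern identity $\int_Y f\,\dvol=(\mathrm{const})\,c_n(Y)[Y]$ (a fixed topological number) with curvature concentration. For $n=2$, take $Y$ a K3 surface: here $f$ is a constant multiple of $\lv S_{\alpha\bar\beta\gamma\bar\sigma}\rv^2$ and hence sign-definite, so along a degenerating family of Ricci-flat metrics whose curvature concentrates (e.g.\ Kummer-type or Gross--Wilson), $\sup_Y\lv f_t\rv\to\infty$ while $\int_Y f_t\,\dvol_t$ and the volume remain fixed, which no constant function can do; hence $f_t$ is nonconstant near the degeneration. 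For $n\geq4$ I would place the degeneration in a K3 factor, writing $Y=(\text{degenerating K3})\times Z$ with $\dim_\bC Z=n-2$ and $Z$ a fixed projective Calabi--Yau satisfying $c_{n-2}(Z)[Z]\neq0$ (products of K3's, plus one Calabi--Yau threefold of nonzero Euler characteristic when $n$ is odd). Since the top Chern density of a product factors as $f=p_1^\ast f_{K3}\cdot p_2^\ast f_Z$ with $f_Z\not\equiv0$, nonconstancy of $f_{K3}$ forces nonconstancy of $f$. The genuinely hard case is $n=3$: it cannot be reduced to a K3 factor (the complementary factor would be a flat elliptic curve, killing $f$), and $f$ is no longer sign-definite, so the $L^\infty$ blow-up argument fails. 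Here one must instead degenerate a family of Calabi--Yau threefolds whose curvature concentrates on an explicit Ricci-flat local model (a conifold or small-resolution bubble), compute $f$ on that model, and check directly that it is nonconstant; this is the main obstacle of the proof. Once $f$ is nonconstant on the base in each dimension, the equivalence established above yields $\Real\nabla^\alpha X_\alpha^{(n)}\not\equiv0$.
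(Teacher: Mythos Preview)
Your reduction to a circle bundle over a projective Ricci-flat K\"ahler $n$-fold $(Y,\omega)$ and the identification $\Real\nabla^\alpha X_\alpha^{(n)}=-\tfrac{1}{2n^2}\Delta_b c_{(n)}(S)$, with $c_{(n)}(S)$ pulled back from the top Chern density $f$ on $Y$, is exactly the paper's approach. The divergence is only in how you produce a $Y$ with $f$ nonconstant.

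Your worry about $n=3$ is misplaced, and this is where the paper is simpler. The argument you gave for $n=2$ has a more robust variant: rather than using $\sup_Y\lv f_t\rv\to\infty$ (which does lean on the sign of $f\propto\lv S\rv^2$), observe that on any compact set disjoint from the curvature-concentration locus the metric $\omega_s$ converges to a flat metric, so $f_s\to0$ there. If $f_s$ were constant it would equal $\chi(Y)\big/\int_Y\omega_s^n$, a number bounded away from zero because $\chi(Y)>0$ and the volume stays bounded. This contradiction uses only that $f$ is a polynomial in curvature, hence vanishes in the flat limit, together with $\chi(Y)\neq0$; no sign condition on $f$ enters. The paper runs precisely this argument for $n=3$ on the Kummer threefold obtained by blowing up the $27$ fixed points of multiplication by a primitive cube root of unity on $E_\zeta^3$ and taking the quotient; it has $\chi=72>0$, and Lu's work supplies the needed $C^{4,\alpha}$ convergence to flat away from the exceptional divisors. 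Your proposed conifold/local-model computation is therefore unnecessary.

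For $n\geq4$ your product strategy (a degenerating K3 times a fixed factor with nonzero top Chern number) is correct and in fact slightly more economical than the paper's, which instead observes that the conditions ``integral K\"ahler class'' and ``$f_\omega$ nonconstant'' are preserved under K\"ahler products and then multiplies the $n=2$ and $n=3$ examples.
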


We construct such a CR manifold as a certain circle bundle over a Calabi--Yau manifold.
Let $(Y, \omega)$ be an $n$-dimensional K\"{a}hler manifold.
There exists a smooth function $f_{\omega}$ on $Y$
such that the $n$-th Chern form $c_{n}(\omega)$ with respect to $\omega$
coincides with $f_{\omega} \cdot \omega^{n}$.

\begin{theorem}
\label{thm:Ricci-flat-with-non-constant-Chern-form}
    For each positive integer $n \geq 2$,
    there exists an $n$-dimensional closed, connected Ricci-flat K\"{a}hler manifold $(Y, \omega)$
    such that $[\omega / 2 \pi] \in H^{2}(Y; \mathbb{Z})$ and $f_{\omega}$ is non-constant.
\end{theorem}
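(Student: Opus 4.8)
The plan is to prove \cref{thm:Ricci-flat-with-non-constant-Chern-form} by first recognizing that, up to a universal dimensional constant, $f_\omega$ is the Chern--Gauss--Bonnet integrand: writing $c_n(\omega)=f_\omega\,\omega^n$ identifies $f_\omega$ with the Pfaffian density of the curvature, so the claim is that some Ricci-flat Kähler metric has non-constant top Chern density. I would reduce the problem to complex dimensions two and three via products. For a product of Kähler manifolds the Chern connection is the direct sum connection, so the total Chern form is multiplicative; taking the top-degree part gives $c_{n_1+n_2}(\pi_1^\ast\omega_1+\pi_2^\ast\omega_2)=\pi_1^\ast c_{n_1}(\omega_1)\wedge\pi_2^\ast c_{n_2}(\omega_2)$, and since $(\pi_1^\ast\omega_1+\pi_2^\ast\omega_2)^{n}=\binom{n}{n_1}\pi_1^\ast\omega_1^{n_1}\wedge\pi_2^\ast\omega_2^{n_2}$, I obtain
\[ f_{\omega_1+\omega_2}(y_1,y_2)=\binom{n}{n_1}^{-1}f_{\omega_1}(y_1)\,f_{\omega_2}(y_2). \]
A product of two Ricci-flat Kähler manifolds each with non-constant $f$ thus has non-constant $f$ (a product of functions that are each non-constant on a distinct factor is non-constant, since a non-constant factor takes some nonzero value). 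Products of Ricci-flat metrics are Ricci-flat, products of closed connected manifolds are closed and connected, and by Künneth the sum of integral classes $[\omega_i/2\pi]$ is integral. As every $n\ge2$ is $2a+3b$ with $a,b\ge0$, it suffices to treat $n=2$ and $n=3$.

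For the base cases I would use a degenerating family $\omega_s$ of Ricci-flat Kähler metrics on a fixed Calabi--Yau $Y$ with $\chi(Y)\ne0$, whose curvature concentrates along a proper analytic subset as $s\to0$ while the metrics converge smoothly, away from that subset, to a flat (orbifold) metric of uniformly bounded volume. For $n=2$ this is the Kummer family on the resolution of $T^4/\bZ_2$, where the curvature bubbles off into ALE (Eguchi--Hanson) regions at the sixteen exceptional loci and the bulk flattens; for $n=3$ one invokes an analogous crepant-resolution degeneration of a Calabi--Yau threefold with $\chi\ne0$. The mechanism is local: $c_n$ is a universal polynomial in the curvature with no constant term, so on the flattening bulk $c_n(\omega_s)\to0$ pointwise, whereas $\omega_s^n$ converges there to the nonzero flat volume form; hence $f_{\omega_s}\to0$ on the bulk. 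If $f_{\omega_s}$ were a constant $\lambda_s$, then integrating $c_n(\omega_s)=\lambda_s\omega_s^n$ and using $\int_Y c_n(\omega_s)=\chi(Y)$ gives $\lambda_s=\chi(Y)/\int_Y\omega_s^n$, which is bounded away from $0$ since $\chi(Y)\ne0$ and the volumes stay bounded. This contradicts $f_{\omega_s}\to0$ on the bulk, so $f_{\omega_s}$ is non-constant for all small $s$.

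It remains to arrange an integral Kähler class. I would first note that non-constancy of $f$ is scale-invariant: a constant rescaling $\omega\mapsto t\omega$ leaves the Chern connection, hence $c_n(\omega)$, unchanged while $(t\omega)^n=t^n\omega^n$, so $f_{t\omega}=t^{-n}f_\omega$. Next, the Ricci-flat representative depends continuously in $C^\infty$ on its Kähler class (Yau's theorem with the standard a priori estimates), so $\operatorname{osc}f_\omega$ is continuous in the class and non-constancy is open. Working on a projective model (a projective Kummer surface, resp.\ a projective Calabi--Yau threefold), rational classes in the ample cone are dense, so I can pick a rational Kähler class with non-constant $f$ and then clear denominators by a scaling $t$ so that $[t\omega/2\pi]\in H^2(Y;\bZ)$, preserving non-constancy. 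This yields the examples for $n=2$ and $n=3$, and the product construction of the first paragraph produces them for every $n\ge2$.

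The main obstacle is the second step: producing a Ricci-flat Kähler family whose curvature genuinely concentrates with the bulk flattening, together with the uniform volume bound and the quantitative statement that $c_n(\omega_s)\to0$ on the bulk. In dimension two this is classical (Kummer/ALE gluing), but in dimension three it requires a suitable known degeneration of Calabi--Yau threefolds, and verifying smooth curvature decay on the bulk---rather than mere Gromov--Hausdorff convergence---is the delicate analytic input. Everything else (multiplicativity of the Chern forms, openness and scale-invariance of non-constancy, and Künneth for integrality) is formal.
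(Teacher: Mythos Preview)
Your proposal is correct and follows essentially the same strategy as the paper: reduce to $n=2$ and $n=3$ via products, use Kummer-type degenerations (the paper's threefold is the crepant resolution of $E_\zeta^3/\langle\zeta\rangle$ with $\chi=72$) whose bulk flattens while $\chi\neq0$ forces a Gauss--Bonnet contradiction if $f_{\omega_s}$ were constant, and then rescale to an integral class. The only notable difference is your integrality step: you invoke openness of non-constancy and density of rational classes, whereas the paper's family $p_!\sigma^*[\omega^{(0)}]-s\sum c_1(\mathcal{O}(E_k))$ already sits in rational classes for rational $s$, so taking $\omega=N\cdot\omega_{2\pi/N}$ for large $N$ gives integrality directly without appealing to continuous dependence of the Ricci-flat representative on its class.
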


\begin{proof}[Proof of \cref{thm:counterexamples-via-calabi-yau}
assuming \cref{thm:Ricci-flat-with-non-constant-Chern-form}]
	Since $[\omega / 2 \pi] \in H^{2}(Y; \mathbb{Z})$ and $Y$ is K\"{a}hler,
	there exists a holomorphic Hermitian line bundle $(L, h)$ over $Y$ such that $\omega = - i \Theta_{h}$.
	Consider the circle bundle $(M, T^{1, 0}, \theta)$ associated with $(Y, L, h)$.
	Since $\omega$ is Ricci-flat,
	$\theta$ is a contact form satisfying
	\begin{equation*}
		R_{\alpha \bar{\beta}} = 0, \qquad A_{\alpha \beta} = 0;
	\end{equation*}
	in particular,
	$V_{\alpha \bar{\beta} \gamma} = 0$.
	Moreover,
	$S_{\alpha\bar\beta\gamma\bar\sigma} = R_{\alpha\bar\beta\gamma\bar\sigma}$,
	and hence $c_{(n)}(S)$ is a nonzero constant multiple of $f_{\omega}$.
	In particular,
	$c_{(n)}(S)$ is non-constant.
	Therefore
	\begin{equation*}
		\Real \nabla^{\alpha} X^{(n)}_{\alpha}
		= - \frac{1}{n^{2}} \Real \nabla^{\alpha} \nabla_{\alpha} c_{(n)}(S)
		= - \frac{1}{2 n^{2}} \Delta_{b} c_{(n)}(S)
		\neq 0,
	\end{equation*}
	which completes the proof.
\end{proof}

It remains to show \cref{thm:Ricci-flat-with-non-constant-Chern-form}.

\begin{proof}[Proof of \cref{thm:Ricci-flat-with-non-constant-Chern-form}]
	As we will see in the following two subsections,
	such $(Y, \omega)$ exists in the cases of $n = 2$ and $3$.
	Since the conditions in \cref{thm:Ricci-flat-with-non-constant-Chern-form} are closed under the product,
	we can construct $(Y, \omega)$ for any $n \geq 2$.
\end{proof}

\subsection{Two-dimensional case}

Consider the two-dimensional complex torus $T = \mathbb{C}^{2} / (\mathbb{Z} + i \mathbb{Z})^{2}$.
Multiplication by $- 1$ on $\mathbb{C}^{2}$ induces an involution $\iota$ on $T$
that has $16$ fixed points $p_{1}, \dots , p_{16}$.
Let $\sigma \colon T^{\prime} \to T$ be obtained from $T$ by blowing up at $p_{1}, \dots, p_{16}$.
The involution $\iota$ lifts to an involution $\iota^{\prime}$ on $T^{\prime}$,
and the quotient $p \colon T^{\prime} \to Y = T^{\prime} / \langle \iota^{\prime} \rangle$ is a closed $K3$ surface;
this is called the \emph{Kummer surface associated to} $T$~\cite[Chapter V.16]{BarthHulekPetersVandeVen2004}.
The space $Y$ contains $16$ complex projective curves $E_{1}, \dots , E_{16}$ corresponding to $p_{1}, \dots , p_{16}$.
It is known that the Euler characteristic of any $K3$ surface is equal to $24$~\cite[Chapter VIII.3]{BarthHulekPetersVandeVen2004}.

Let $\omega^{(0)}$ be the K\"{a}hler form on $T$
induced by $2 \pi i \sum_{j = 1}^{2} d z^{j} \wedge d \overline{z}^{j}$ on $\mathbb{C}^{2}$;
the coefficient is chosen so that $[\omega^{(0)} / 2 \pi] \in H^{2}(T; \mathbb{Z})$.
For $0 < s \ll 1$,
the cohomology class $p_{!} \sigma^{*} [\omega^{(0)}] - s \sum_{k = 1}^{16} c_{1}(\mathcal{O}(E_{k}))$
contains a unique Ricci-flat K\"{a}hler metric $\omega_{s}$ on $Y$
such that $\omega_{s}$ converges smoothly to a flat K\"{a}hler metric as $s \to +0$
on any compact subset of $Y \setminus \bigcup_{k = 1}^{16} E_{k}$~\cite[Chapter 2]{Kobayashi1990}.
Note that
\begin{equation*}
	0 < \int_{Y} \omega_{s}^{2}
	= \int_{Y} (p_{!} \sigma^{*} [\omega^{(0)}])^{2} - 32 s^{2}
	< \int_{Y} (p_{!} \sigma^{*} [\omega^{(0)}])^{2}.
\end{equation*}
Suppose that $f_{s} := f_{\omega_{s}}$ is constant for any $0 < s \ll 1$.
From the Gauss--Bonnet--Chern formula, it follows that
\begin{equation*}
	24
	= \int_{Y} c_{2} (\omega_{s})
	= f_{s} \int_{Y} \omega_{s}^{2}
	< f_{s} \int_{Y} (p_{!} \sigma^{*} [\omega^{(0)}])^{2};
\end{equation*}
that is,
\begin{equation*}
	f_{s}
	> 24 \left[ \int_{Y} (p_{!} \sigma^{*} [\omega^{(0)}])^{2} \right]^{- 1}.
\end{equation*}
However,
since $\omega_{s}$ converges smoothly to a flat K\"{a}hler metric as $s \to +0$
on any compact subset of $Y \setminus \bigcup_{k = 1}^{16} E_{k}$,
we have $f_{s} \ll 1$ for sufficiently small $s$;
this is a contradiction.
Hence $f_{s}$ is non-constant for sufficiently small $s$.
If we take a sufficiently large positive integer $N$,
the Ricci-flat K\"{a}hler metric $\omega = N \cdot \omega_{2 \pi / N}$ satisfies
$[\omega / 2 \pi] \in H^{2}(Y; \mathbb{Z})$ and $f_{\omega}$ is non-constant.

\subsection{Three-dimensional case}

Let $\zeta$ be a primitive cubic root of one,
and denote by $E_{\zeta}$ the elliptic curve $\mathbb{C} / (\mathbb{Z} + \mathbb{Z} \zeta)$.
Multiplication by $\zeta$ on $\mathbb{C}^{3}$
induces a biholomorphism $\Phi_{\zeta}$ on $E_{\zeta}^{3}$.
This map satisfies $\Phi_{\zeta}^{3} = \Id$ and has $27$ fixed points $p_{1}, \dots , p_{27}$.
Let $\sigma \colon \widetilde{Y} \to E_{\zeta}^{3}$ be obtained from $E_{\zeta}^{3}$
by blowing up at $p_{1}, \dots, p_{27}$.
The biholomorphism $\Phi_{\zeta}$ lifts to a biholomorphism $\widetilde{\Phi}_{\zeta}$ on $\widetilde{Y}$
satisfying $\widetilde{\Phi}_{\zeta}^{3} = \Id$,
and the quotient $p \colon \widetilde{Y} \to Y = \widetilde{Y} / \langle \widetilde{\Phi}_{\zeta} \rangle$
is a closed smooth Calabi--Yau threefold,
called a \emph{Kummer threefold}.
The space $Y$ contains $27$ complex projective planes $E_{1}, \dots , E_{27}$ corresponding to $p_{1}, \dots , p_{27}$.
The Euler characteristic of $Y$ is $72$~\cite[Theorem 5(i)]{RoanYau1987}.

Let $\omega^{(0)}$ be the K\"{a}hler form on $E_{\zeta}^{3}$
induced by $(2 \pi \sqrt{3}) i \sum_{j = 1}^{3} d z^{j} \wedge d \overline{z}^{j}$ on $\mathbb{C}^{3}$;
the coefficient is chosen so that $[\omega^{(0)} / 2 \pi] \in H^{2}(E_{\zeta}^{3}; \mathbb{Z})$.
For $0 < s \ll 1$,
the cohomology class $p_{!} \sigma^{*} [\omega^{(0)}] - s \sum_{k = 1}^{27} c_{1}(\mathcal{O}(E_{k}))$
contains a unique Ricci-flat K\"{a}hler metric $\omega_{s}$ on $Y$
such that $\omega_{s}$ converges in $C^{4, \alpha}$ to a flat K\"{a}hler metric as $s \to +0$
on any compact subset of $Y \setminus \bigcup_{k = 1}^{27} E_{k}$~\cite[Section 3.1]{Lu1999}.
Note that
\begin{equation*}
	0
	< \int_{Y} \omega_{s}^{3}
	= \int_{Y} (p_{!} \sigma^{*} [\omega^{(0)}])^{3} - 243 s^{3}
	< \int_{Y} (p_{!} \sigma^{*} [\omega^{(0)}])^{3}.
\end{equation*}

Suppose that $f_{s} := f_{\omega_{s}}$ is constant for any $0 < s \ll 1$.
From the Gauss--Bonnet--Chern formula,
it follows that
\begin{equation*}
	72
	= \int_{Y} c_{3} (\omega_{s})
	= f_{s} \int_{Y} \omega_{s}^{3}
	< f_{s} \int_{Y} (p_{!} \sigma^{*} [\omega^{(0)}])^{3};
\end{equation*}
that is,
\begin{equation*}
	f_{s}
	> 72 \left[ \int_{Y} (p_{!} \sigma^{*} [\omega^{(0)}])^{3} \right]^{- 1}.
\end{equation*}
However,
since $\omega_{s}$ converges in $C^{4, \alpha}$ to a flat K\"{a}hler metric as $s \to +0$
on any compact subset of $Y \setminus \bigcup_{k = 1}^{27} E_{k}$,
we have $f_{s} \ll 1$ for sufficiently small $s$;
this is a contradiction.
Hence $f_{s}$ is non-constant for sufficiently small $s$.
If we take a sufficiently large positive integer $N$,
the Ricci-flat K\"{a}hler metric $\omega = N \cdot \omega_{2 \pi / N}$ satisfies
$[\omega / 2 \pi] \in H^{2}(Y; \mathbb{Z})$ and $f_{\omega}$ is non-constant.

\section{The $\mathcal{I}^{\prime}$-curvature of the boundary of a Reinhardt domain}
\label{sec:reinhardt}

For $r > 0$,
let $M_{r}$ be the boundary of the bounded Reinhardt domain
\begin{equation*}
	\Omega_{r}
	:= \{ w = (w^{0}, \dots , w^{n}) \in \mathbb{C}^{n + 1} \mid \rho_{r}(w) > 0 \},
\end{equation*}
where
\begin{equation*}
	\rho_{r}(w) := \frac{1}{2} - \frac{1}{2 r^{2}} \sum_{j = 0}^{n} (\log |w^{j}|)^{2}.
\end{equation*}
We would like to compute the total $\mathcal{I}^{\prime}$-curvatures for $M_{r}$.
To this end,
consider the holomorphic map
\begin{equation*}
	\psi_{r} \colon \mathbb{C}^{n + 1} \to \mathbb{C}^{n + 1};
	(z^{0}, \dots , z^{n}) \mapsto (\exp(2 r z^{0}), \dots , \exp(2 r z^{n})).
\end{equation*}
The pull-back $\psi_{r}^{*} \rho_{r} (z)$ coincides with
\begin{equation*}
	\rho(z)
	:= \frac{1}{2} - 2 \sum_{j = 0}^{n} (\Real z^{j})^{2},
\end{equation*}
and the pre-image of $\Omega_{r}$ by $\psi_{r}$ is the tube domain
\begin{equation*}
	\Omega
	= \left\{ z = (x^{0} + i y^{0}, \dots , x^{n} + i y^{n}) \in \mathbb{C}^{n + 1} \middle| \,
		|x|^{2} = \sum_{j = 0}^{n} (x^{j})^{2} < \frac{1}{4} \right\}
\end{equation*}
The holomorphic map $\psi_{r}$ induces also a pseudohermitian map
\begin{equation*}
	(M := \partial \Omega, T^{1, 0}, \theta := \Imaginary \overline{\partial} \rho |_{M})
	\to (M_{r}, T^{1, 0}_{r}, \theta_{r} := \Imaginary \overline{\partial} \rho_{r} |_{M_{r}}).
\end{equation*}
where $T^{1,0}:=T^{1,0}\bC^{n+1} \cap (TM\otimes\bC)$ and $T_r^{1,0}:=T^{1,0}\bC^{n+1}\cap(TM_r\otimes\bC)$.  The group $G = O(n + 1) \ltimes (i \mathbb{R})^{n + 1}$ acts on $\mathbb{C}^{n + 1}$
as a subgroup of the complex affine transformation group $GL(n + 1, \mathbb{C}) \ltimes \mathbb{C}^{n + 1}$,
and its action preserves $\rho$.
In particular,
the pseudohermitian manifold
$(M, T^{1, 0}, \theta)$
is homogeneous with respect to the above $G$-action.
Hence it suffices to consider a given point $p := (1 / 2, 0, \dots , 0) \in M$
for computing pseudo-Hermitian invariants.
We set $x' := (x^{1}, \dots , x^{n})$. 
Let
\begin{equation*}
	\xi
	:= - \frac{1}{2 |x|^{2}} \sum_{j = 0}^{n} x^{j} \frac{\partial}{\partial z^{j}}
	\in \Gamma(T^{1, 0} \mathbb{C}^{n + 1} |_{M}).
\end{equation*}
This vector field satisfies
\begin{equation*}
	\xi \rho
	= 1,
	\qquad
	\xi \contr \partial \overline{\partial} \rho
	= - \frac{1}{4 |x|^{2}} \overline{\partial} \rho.
\end{equation*}
For $\alpha \in \{1, \dots , n \}$,
the $(1, 0)$-forms
\begin{equation*}
	\theta^{\alpha}
	:= d z^{\alpha} + \frac{1}{2 |x|^{2}} x^{\alpha} \partial \rho
\end{equation*}
annihilate $\xi$
and their restriction to $M$ gives an admissible coframe.
A calculation shows that
the Levi form $h_{\alpha \bar{\beta}}$ is given by
\begin{equation*}
	h_{\alpha \bar{\beta}}
	= \delta_{\alpha \beta} + \frac{x^{\alpha} x^{\beta}}{(x^{0})^{2}}
	= \delta_{\alpha \beta} + 4 x^{\alpha} x^{\beta} + O(|x'|^{4}).
\end{equation*}
A similar computation to that in the proof of \cite[Proposition 5.2]{Marugame2016}
gives that
\begin{gather*}
	\omega_{\alpha}{}^{\beta}
	= - i \delta_{\alpha}{}^{\beta} \theta + 2 x^{\beta} \theta^{\alpha} + 2 x^{\beta} \theta^{\bar{\alpha}} + O(|x'|^{2}), \\
	A_{\alpha \beta}
	= - i \delta_{\alpha \beta} + O(|x'|^{2}).
\end{gather*}
At $p$,
the pseudohermitian torsion $A_{\alpha \beta}$ satisfies
\begin{equation*}
	\nabla_{\gamma} A_{\alpha \beta}
	= 0,
	\qquad
	\nabla_{\bar{\gamma}} A_{\alpha \beta}
	= 0,
	\qquad
	\nabla_{0} A_{\alpha \beta}
	= 2 i A_{\alpha \beta},
	\qquad
	A_{\alpha \beta} A^{\beta}{}_{\bar{\gamma}}
	= h_{\alpha \bar{\gamma}}.
\end{equation*}
Since both sides of these equalities are tensorial
and $(M, T^{1, 0}, \theta)$ is homogeneous,
these in fact hold on the whole of $M$.
Similarly,
the curvature form $\Pi_{\alpha}{}^{\beta}$ at $p$ is given by
\begin{equation*}
	\Pi_{\alpha}{}^{\beta}
	= (\delta_{\alpha}^{\beta} h_{\rho \bar{\sigma}} + \delta_{\rho}^{\beta} h_{\alpha \bar{\sigma}}
		- A_{\alpha \rho} A^{\beta}{}_{\bar{\sigma}}) \theta^{\rho} \wedge \theta^{\bar{\sigma}}
		- i \tau_\alpha \wedge \theta^{\beta}
		+ i \theta_\alpha \wedge \tau^\beta.
\end{equation*}
The right hand side is tensorial,
and so this equality holds on the whole of $M$.
Local pseudohermitian invariants can be calculated explicitly:
\begin{gather*}
	P_{\alpha \bar{\beta}}
	= \frac{n}{2 (n + 1)} h_{\alpha \bar{\beta}}, \\
	S_{\alpha \bar{\beta} \gamma \bar{\sigma}}
	= \frac{1}{n + 1} (h_{\alpha \bar{\beta}} h_{\gamma \bar{\sigma}}
		+ h_{\alpha \bar{\sigma}} h_{\gamma \bar{\beta}})
		- A_{\alpha \gamma} A_{\bar{\beta} \bar{\sigma}}, \\
	V_{\alpha \bar{\beta} \gamma}
	= 0, \\
	U_{\alpha \bar{\beta}}
	= 0.
\end{gather*}
In particular,
$\theta$ (or $\theta_{r}$) is a pseudo-Einstein contact form
with constant scalar curvature but nonvanishing pseudohermitian torsion.
Moreover,
the Chern tensor is parallel;
\begin{equation*}
	\nabla_{\rho} S_{\alpha \bar{\beta} \gamma \bar{\sigma}}
	= 0,
	\qquad
	\nabla_{\bar{\rho}} S_{\alpha \bar{\beta} \gamma \bar{\sigma}}
	= 0,
	\qquad
	\nabla_{0} S_{\alpha \bar{\beta} \gamma \bar{\sigma}}
	= 0.
\end{equation*}

\begin{theorem}
\label{thm:I-prime-curvature-for-Reinhardt-domains}
	The total $\mathcal{I}^{\prime}_{\Phi(\varsigma)}$-curvature $\overline{\mathcal{I}}^{\prime}_{\Phi(\varsigma)}$ for $M_{r}$
	is given by
	\begin{equation*}
	\label{eqn:total-I-curv-of-reinhardt}
		\overline{\mathcal{I}}^{\prime}_{\Phi(\varsigma)}
		= - (n !)^{2} \Vol(S^{n}(1))  \left( \frac{2 \pi}{(n + 1)r} \right)^{n + 1}
			\prod_{k = 1}^{n} [(n + 2)(1 - (n + 2)^{k - 1})]^{\varsigma_{k}},
	\end{equation*}
	where $\Vol(S^{n}(1))$ is the volume of the unit sphere in $\mathbb{R}^{n + 1}$.
\end{theorem}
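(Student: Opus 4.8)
The plan is to exploit the special structure of $M_{r}$ recorded above---$V_{\alpha\bar\beta\gamma}=0$, $U_{\alpha\bar\beta}=0$, parallelism of $S_{\alpha\bar\beta\gamma\bar\sigma}$, and homogeneity under $G=O(n+1)\ltimes(i\bR)^{n+1}$---to collapse $\mI_{\Phi(\varsigma)}^\prime$ to a single constant, and then to multiply by the contact volume of $M_{r}$. First, since $V_{\alpha\bar\beta\gamma}=U_{\alpha\bar\beta}=0$, the entire second line of~\cref{eqn:mIn} vanishes, leaving $\mI_{\Phi(\varsigma)}^\prime=\frac{1}{n^{3}}\Delta_{b}c_{\Phi(\varsigma)}(S)-\frac{2}{n^{2}}Pc_{\Phi(\varsigma)}(S)$. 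Because $S_{\alpha\bar\beta\gamma\bar\sigma}$ is parallel and $G$ acts transitively on $M$, the scalar $c_{\Phi(\varsigma)}(S)$ is constant, so $\Delta_{b}c_{\Phi(\varsigma)}(S)=0$; and $P_{\alpha\bar\beta}=\frac{n}{2(n+1)}h_{\alpha\bar\beta}$ gives $P=P_{\gamma}{}^{\gamma}=\frac{n^{2}}{2(n+1)}$. Hence $\mI_{\Phi(\varsigma)}^\prime=-\frac{1}{n+1}c_{\Phi(\varsigma)}(S)$ is constant, and $\overline{\mathcal{I}}^\prime_{\Phi(\varsigma)}=-\frac{1}{n+1}c_{\Phi(\varsigma)}(S)\int_{M_{r}}\theta_{r}\wedge d\theta_{r}^{n}$.

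It remains to evaluate $c_{\Phi(\varsigma)}(S)$ and the volume. For the former, recall from \cref{sec:chern} (and its use in \cref{sec:perturb}) that $\frac{1}{n!}c_{\Phi}(S)\,d\theta^{n}=c_{\Phi}(\mathcal{C}_{\alpha}{}^{\beta})$, where $\mathcal{C}_{\alpha}{}^{\beta}:=iS_{\alpha}{}^{\beta}{}_{\mu}{}^{\nu}\theta^{\mu}\theta_{\nu}$; for $\Phi=\Phi(\varsigma)$ this reduces the problem to computing the scalar $2k$-forms $\tr\mathcal{C}^{k}$. By homogeneity it suffices to work at $p$ in an admissible coframe with $h_{\alpha\bar\beta}=\delta_{\alpha\beta}$ and $A_{\alpha\beta}=-i\delta_{\alpha\beta}$; the formula for $S_{\alpha\bar\beta\gamma\bar\sigma}$ then gives $S_{\alpha}{}^{\beta}{}_{\mu}{}^{\nu}=\frac{1}{n+1}(\delta_{\alpha}^{\beta}\delta_{\mu}^{\nu}+\delta_{\alpha}^{\nu}\delta_{\mu}^{\beta})-\delta_{\alpha\mu}\delta^{\beta\nu}$, whence $\mathcal{C}_{\alpha}{}^{\beta}=i(a\Sigma\,\delta_{\alpha}^{\beta}+aF_{\alpha}{}^{\beta}-G_{\alpha}{}^{\beta})$ with $a:=(n+1)^{-1}$, $\Sigma:=\sum_{\gamma}\theta^{\gamma}\wedge\theta^{\bar\gamma}$ (so $d\theta=i\Sigma$), $F_{\alpha}{}^{\beta}:=\theta^{\beta}\wedge\theta^{\bar\alpha}$, and $G_{\alpha}{}^{\beta}:=\theta^{\alpha}\wedge\theta^{\bar\beta}$.

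The heart of the computation is the observation that, under matrix multiplication with exterior product of the entries, these satisfy the closed relations $F^{2}=-\Sigma F$, $G^{2}=-\Sigma G$, and $FG=GF=0$, with $\Sigma$ central and $\tr\delta=n$, $\tr F=\tr G=\Sigma$; each is a short Grassmann calculation. Since $a\Sigma\,\delta_{\alpha}^{\beta}$ is central, the binomial theorem together with $FG=GF=0$ gives $\mathcal{C}^{k}=i^{k}\sum_{j=0}^{k}\binom{k}{j}(a\Sigma)^{k-j}N^{j}$ with $N:=aF-G$ and $N^{j}=(-\Sigma)^{j-1}(a^{j}F+(-1)^{j}G)$ for $j\geq1$. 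Taking traces and summing the resulting binomial series yields $\tr\mathcal{C}^{k}=\frac{(n+2)(1-(n+2)^{k-1})}{(n+1)^{k}}\,d\theta^{k}$; in particular $\tr\mathcal{C}=0$, consistent with $\varsigma_{1}>0\Rightarrow c_{\Phi(\varsigma)}(S)=0$ and with the vanishing $k=1$ factor in the claimed product. Because $\sum_{k}k\varsigma_{k}=n$, the powers of $n+1$ collect to the fixed quantity $(n+1)^{n}$, and $c_{\Phi(\varsigma)}(S)=n!\prod_{k}(\tr\mathcal{C}^{k}/d\theta^{k})^{\varsigma_{k}}=\frac{n!}{(n+1)^{n}}\prod_{k=1}^{n}[(n+2)(1-(n+2)^{k-1})]^{\varsigma_{k}}$.

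Finally, to compute $\int_{M_{r}}\theta_{r}\wedge d\theta_{r}^{n}$ I pull back along the covering $\psi_{r}\colon M\to M_{r}$, whose deck group is $(\pi/r)\bZ^{n+1}$ acting by translation in the $y$-variables; thus the integral equals $\int_{\mathcal{F}}\theta\wedge d\theta^{n}$ over the fundamental domain $\mathcal{F}=\{|x|^{2}=1/4\}\times(\bR/(\pi/r)\bZ)^{n+1}$. With $\theta=2\sum_{j}x^{j}\,dy^{j}$ and $d\theta=2\sum_{j}dx^{j}\wedge dy^{j}$, the top form $\theta\wedge d\theta^{n}$ factors, up to orientation, as a constant multiple of the round volume element of the sphere $\{|x|=1/2\}$ wedged with $dy^{0}\wedge\dotsm\wedge dy^{n}$, and integrating over $\mathcal{F}$ produces a constant multiple of $n!\,\Vol(S^{n}(1))(\pi/r)^{n+1}$. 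Substituting this and the value of $c_{\Phi(\varsigma)}(S)$ into $\overline{\mathcal{I}}^\prime_{\Phi(\varsigma)}=-\frac{1}{n+1}c_{\Phi(\varsigma)}(S)\int_{M_{r}}\theta_{r}\wedge d\theta_{r}^{n}$ gives the stated formula. The main obstacle is the evaluation of $\tr\mathcal{C}^{k}$: the naive expansion is unwieldy, and the computation becomes tractable only once one isolates the relations $F^{2}=-\Sigma F$, $G^{2}=-\Sigma G$, $FG=GF=0$; after that the binomial summation and the collection $\sum_{k}k\varsigma_{k}=n$ do the rest. Secondary care is needed to pin down the covering degree, the sphere radius, and the orientation in the volume computation.
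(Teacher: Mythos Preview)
Your proposal is correct and follows essentially the same route as the paper: reduce $\mI_{\Phi(\varsigma)}^\prime$ to $-\frac{1}{n+1}c_{\Phi(\varsigma)}(S)$ using $V=U=0$ and parallelism, compute $c_{\Phi(\varsigma)}(S)$ by evaluating $\tr\mathcal{C}^k$ via algebraic relations among the constituent $\End(T^{1,0})$-valued two-forms, and compute the contact volume via the $\bZ^{n+1}$-covering $\psi_r$. The only cosmetic difference is that the paper decomposes $\Xi_\alpha{}^\beta$ into two pieces $\Sigma_\alpha{}^\beta=\tfrac{1}{n+1}(-i\delta_\alpha^\beta d\theta+\theta^\beta\wedge\theta_\alpha)$ and $L_\alpha{}^\beta=-\tau_\alpha\wedge\tau^\beta$ (with relations $\Sigma L=L\Sigma=-\tfrac{i}{n+1}d\theta\wedge L$, etc.), whereas you split $\mathcal{C}=i\Xi$ into three pieces $a\Sigma\delta$, $aF$, $-G$ and exploit the orthogonality $FG=GF=0$; your packaging makes the binomial summation marginally cleaner, but the computations are equivalent. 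Your volume sketch (``a constant multiple of $n!\,\Vol(S^n(1))(\pi/r)^{n+1}$'') should be tightened to pin down the constant as~$1$, exactly as the paper does by recognizing $\sum_j(-1)^jx^j\,dx^0\wedge\dotsm\wedge\widehat{dx^j}\wedge\dotsm\wedge dx^n$ as half the volume form of $S^n(1/2)$.
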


\begin{proof}
	Set
	\begin{equation*}
		\Sigma_{\alpha}{}^{\beta}
		:= \frac{1}{n + 1} (- i \delta_{\alpha}^{\beta} d \theta + \theta^{\beta} \wedge \theta_{\alpha}),
		\qquad
		L_{\alpha}{}^{\beta}
		:= - \tau_{\alpha} \wedge \tau^{\beta},
	\end{equation*}
	which satisfy $\Xi_{\alpha}{}^{\beta} = \Sigma_{\alpha}{}^{\beta} + L_{\alpha}{}^{\beta}$, where $\Xi_\alpha{}^\beta$ is defined by \cref{eqn:Xi}.
	These $\Sigma_{\alpha}{}^{\beta}$ and $L_{\alpha}{}^{\beta}$ satisfy
	\begin{gather*}
		\tr \Sigma
		= - i d \theta,
		\qquad
		\tr L
		= i d \theta, \\
		\Sigma_{\alpha}{}^{\gamma} \wedge \Sigma_{\gamma}{}^{\beta}
		= - \frac{i}{n + 1} d \theta \wedge \Sigma_{\alpha}{}^{\beta}, \\
		L_{\alpha}{}^{\gamma} \wedge \Sigma_{\gamma}{}^{\beta}
		= \Sigma_{\alpha}{}^{\gamma} \wedge L_{\gamma}{}^{\beta}
		= - \frac{i}{n + 1} d \theta \wedge L_{\alpha}{}^{\beta}, \\
		L_{\alpha}{}^{\gamma} \wedge L_{\gamma}{}^{\beta}
		= - i d \theta \wedge L_{\alpha}{}^{\beta}.
	\end{gather*}
	Hence
	\begin{align*}
		(\Xi^{k})_{\alpha}{}^{\beta}
		&= (\Sigma^{k})_{\alpha}{}^{\beta}
			+ \sum_{j = 1}^{k} \binom{k}{j} (\Sigma^{k - j})_{\alpha}{}^{\gamma} \wedge (L^{j})_{\gamma}{}^{\beta} \\
		&= \frac{1}{(n + 1)^{k - 1}} (- i d \theta)^{k - 1} \wedge \Sigma_{\alpha}{}^{\beta}
			+ \sum_{j = 1}^{k} \binom{k}{j} \left( \frac{1}{n + 1} \right)^{k - j} (- i d \theta)^{k - 1} \wedge L_{\alpha}{}^{\beta}\\
		&= (- i d \theta)^{k - 1} \wedge \left[\frac{1}{(n + 1)^{k - 1}} \Sigma_{\alpha}{}^{\beta}
			+ \frac{(n + 2)^{k} - 1}{(n + 1)^{k}} L_{\alpha}{}^{\beta} \right],
	\end{align*}
	and so
	\begin{equation*}
		\tr \Xi^{k}
		= \frac{(n + 2)[1 - (n + 2)^{k - 1}]}{(n + 1)^{k}} (- i d \theta)^{k}.
	\end{equation*}
	Since
	\begin{equation*}
		c_{\Phi(\varsigma)}(i \Xi_{\alpha}{}^{\beta})
		= \frac{1}{n !} c_{\Phi(\varsigma)}(S) d \theta^{n},
	\end{equation*}
	we have
	\begin{align*}
		c_{\Phi(\varsigma)}(S)
		&= n ! \prod_{k = 1}^{n} \left[\frac{(n + 2)(1 - (n + 2)^{k - 1})}{(n + 1)^{k}} \right]^{\varsigma_{k}} \\
		&= \frac{n !}{(n + 1)^{n}} \prod_{k = 1}^{n} [(n + 2)(1 - (n + 2)^{k - 1})]^{\varsigma_{k}}.
	\end{align*}
	Therefore
	the $\mathcal{I}^{\prime}_{\Phi(\varsigma)}$-curvature of $M$ is given by
	\begin{align*}
		\mathcal{I}^{\prime}_{\Phi(\varsigma)}
		&= - \frac{n !}{(n + 1)^{n + 1}} \prod_{k = 1}^{n} [(n + 2)(1 - (n + 2)^{k - 1})]^{\varsigma_{k}}.
	\end{align*}
	In particular,
	$\mathcal{I}^{\prime}_{\Phi(\varsigma)}$ is constant,
	and equal to zero if and only if $\varsigma_{1} \neq 0$.

	We need also to compute the volume $\int_{M_{r}} \theta_{r} \wedge d \theta_{r}^{n}$.
	The pseudohermitian map $\psi_{r} \colon M \to M_{r}$ is a $\mathbb{Z}^{n}$-covering,
	and a fundamental domain $\Lambda_{r}$ is given by
	\begin{equation*}
		\Lambda_{r}
		:= \{ z = x + i y \in \mathbb{C}^{n + 1} \mid |x|^{2} = 1 / 4, y \in [0, \pi / r)^{n + 1} \}.
	\end{equation*}
	It suffices to compute the volume of $\Lambda_{r}$.
	From the definition of $\theta$,
	we have
	\begin{equation*}
		\theta
		= 2 \sum_{j = 0}^{n} x^{j} d y^{j},
		\qquad
		d \theta
		= 2 \sum_{j = 0}^{n} d x^{j} \wedge d y^{j}.
	\end{equation*}
	Hence
	\begin{align*}
		\int_{M_{r}} \theta_{r} \wedge d \theta_{r}^{n}
		&= \int_{\Lambda_{r}} \theta \wedge d \theta^{n} \\
		&= \sum_{j = 0}^{n} 2^{n + 1} n ! \int_{\Lambda_{r}} (d x^{0} \wedge d y^{0}) \wedge \dots \wedge (x^{j} d y^{j})
			\wedge \dots \wedge (d x^{n} \wedge d y^{n}) \\
		&= 2^{n + 1} n ! \left( \frac{\pi}{r} \right)^{n + 1}
			\int_{S^{n}(1/2)} \sum_{j = 0}^{n} (- 1)^{j} d x^{0} \wedge \dots \wedge x^{j} \wedge \dots \wedge d x^{n}.
	\end{align*}
	The $n$-form
	\begin{equation*}
		\sum_{j = 0}^{n} (- 1)^{j} d x^{0} \wedge \dots \wedge x^{j} \wedge \dots \wedge d x^{n}
	\end{equation*}
	on $S^{n}(1/2)$ is half of its volume form,
	and so
	\begin{equation*}
		\int_{S^{n}(1/2)} \sum_{j = 0}^{n} (- 1)^{j} d x^{0} \wedge \dots \wedge x^{j} \wedge \dots \wedge d x^{n}
		= 2^{- n - 1} \Vol(S^{n}(1)).
	\end{equation*}
	Therefore we have \cref{eqn:total-I-curv-of-reinhardt}.
\end{proof}

\section{Concluding remarks}
\label{sec:conclusion}

In light of Alexakis' characterization of global conformal invariants~\cite{Alexakis2012}, it is natural to expect that a weaker version of \cref{conj:strong_hirachi} is true.  One way to weaken \cref{conj:strong_hirachi} is to allow, in addition to local CR invariants, pseudohermitian scalar invariants $I$ for which $\mP\subset\ker D_\theta I$ for all pseudo-Einstein contact forms $\theta$.  We propose allowing an even weaker type of invariant.

\begin{definition}
 \label{defn:local_secondary_invariant}
 Fix $n\in\bN$.  A homogeneous pseudohermitian scalar invariant $I^\theta$ is a \emph{local secondary invariant} if
 \begin{equation}
  \label{eqn:local_secondary_invariant}
  \int_M uI^{\htheta}\,\htheta \wedge d\htheta^n = \int_M uI^\theta\,\theta \wedge d\theta^n
 \end{equation}
 for any pseudo-Einstein contact forms $\theta$ and $\htheta$ on a closed CR manifold $(M^{2n+1},T^{1,0})$ and any $u\in\mP$.
\end{definition}

Note that if $I$ is homogeneous of degree $-n-1$ in $\theta$ and if $\mP\subset\ker D_\theta I$ for all pseudo-Einstein contact forms $\theta$, then it is a local secondary invariant.  We propose the following weaker version of \cref{conj:strong_hirachi}.

\begin{conjecture}
 \label{conj:weak_hirachi}
 Let $I$ be a natural pseudohermitian scalar invariant whose total integral is a secondary CR invariant.  Then there is a constant $c\in\bR$ such that
 \[ I = cQ^\prime + (\textup{local secondary invariant}) + (\textup{divergence}) . \]
\end{conjecture}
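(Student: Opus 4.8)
The plan is to establish \cref{conj:weak_hirachi} as a CR analogue of Alexakis' classification of global conformal invariants~\cite{Alexakis2012}, with the local secondary invariants of \cref{defn:local_secondary_invariant} playing the role that local conformal invariants play in the Riemannian setting. First I would reduce the global hypothesis to an infinitesimal one. Since pseudo-Einstein contact forms on a fixed closed CR manifold differ by $\htheta=e^\Upsilon\theta$ with $\Upsilon\in\mP$~\cite{Lee1988}, differentiating the assumed invariance of $\int_M I^\theta\,\theta\wedge d\theta^n$ shows that
\[ \int_M D_\theta I(\Upsilon)\,\theta\wedge d\theta^n = 0 \]
for all $\Upsilon\in\mP$ and all pseudo-Einstein $\theta$, on every closed CR manifold admitting such a $\theta$. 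Integrating by parts, $D_\theta I(\Upsilon)$ agrees modulo a divergence with $\Upsilon\,L_\theta$ for a natural scalar $L_\theta$ of weight $-(n+1)$, and the displayed condition becomes the statement that $L_\theta$ is $L^2$-orthogonal to $\mP$ for every pseudo-Einstein $\theta$.

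The second step is to bring CR invariant theory to bear. By Fefferman's parabolic invariant-theory program, any natural pseudohermitian scalar invariant can be written as a linear combination of complete contractions of the Tanaka--Webster curvature, the pseudohermitian torsion, and their covariant derivatives, and the conformal linearization $D_\theta$ acts on this algebra as the derivation recorded in \cref{lem:variations}. The goal is then to decompose $D_\theta I(\Upsilon)$, modulo divergences, into three pieces: a multiple of the $Q^\prime$-anomaly $P^\prime(\Upsilon)$ appearing in \cref{eqn:Qprime-transformation}; a piece that integrates to zero against every $u\in\mP$ for pseudo-Einstein $\theta$, so that its primitive is a local secondary invariant in the sense of \cref{defn:local_secondary_invariant}; and a part lying in $\mP^\perp$. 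Subtracting $c\,Q^\prime$ removes the $P^\prime(\Upsilon)$ term, with $c$ pinned down by evaluating on the round CR sphere exactly as in the counterexample argument of \cref{sec:hirachi}, where local CR invariants and the $\mI_\Phi^\prime$-curvatures vanish but $Q^\prime$ is a nonzero constant.

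The heart of the matter, and the step I expect to be the main obstacle, is a CR analogue of Alexakis' super divergence formula: one must show that any natural $L_\theta$ that is $L^2$-orthogonal to $\mP$ for all pseudo-Einstein $\theta$ is, after removing the universal $Q^\prime$ contribution, the divergence of a natural one-form plus the conformal linearization of a local secondary invariant. Two features make this harder than the conformal case. First, the variational data is constrained to the pluriharmonic class $\mP$ rather than to all of $C^\infty(M)$, so the orthogonality hypothesis controls $L_\theta$ only modulo $\mP^\perp$ and one cannot immediately pass to a pointwise identity; this is precisely the phenomenon encoded by the forms $X_\alpha^\Phi$ and the characteristic-class obstructions $c_\Phi(T^{1,0})$ of \cref{prop:chern}. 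Second, the Fefferman expansion for CR structures carries the ambiguities intrinsic to the weight-zero part of the ambient construction, so the classification must be performed modulo exact Rumin forms and the cohomological terms identified in \cref{sec:chern}. Resolving these points would require either extending Alexakis' iterated integration-by-parts machinery to the Heisenberg-weighted setting or developing a direct cohomological characterization of local secondary invariants; since our examples $\mI_\Phi^\prime$ already show that this class is strictly larger than the local CR invariants, any such argument must be able to produce all of them.
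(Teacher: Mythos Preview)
The statement you are addressing is \cref{conj:weak_hirachi}, which in the paper is explicitly a \emph{conjecture}, not a theorem. There is no proof in the paper to compare against: the authors pose it in \cref{sec:conclusion} as an open problem, offer two heuristic motivations (the analogy with the $\mP^\perp$-transformation law of $Q^\prime$ in \cref{eqn:Qprime-transformation}, and the analytic-continuation heuristic \cref{eqn:formal_limit}--\cref{eqn:I-prime-as-limit}), and then show in the final Proposition that the $\mI_\Phi^\prime$-curvatures are \emph{not} local secondary invariants in the sense of \cref{defn:local_secondary_invariant}. The surrounding discussion is entirely speculative; the authors explicitly write that the existence of the tensors $\omega_{\alpha\bar\beta}$ and $\zeta_\alpha$ needed to fit $\mI_\Phi^\prime$ into the conjectural framework is ``pure speculation.''

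Your proposal is not a proof but an outline of where the difficulties lie, and you correctly flag the main obstruction yourself: the passage from the integral constraint $\int_M L_\theta\,u=0$ for $u\in\mP$ to a pointwise structural statement about $L_\theta$. In the conformal case Alexakis has $u$ ranging over all of $C^\infty(M)$, which forces $L_\theta\equiv0$ and feeds directly into his super-divergence machinery; here you only get $L_\theta\in\mP^\perp$, which is a genuinely weaker input, and there is at present no analogue of Alexakis' algorithm that operates modulo $\mP^\perp$. Your first two steps (reduction to the infinitesimal identity and isolating the $Q^\prime$ coefficient via the sphere) are sound and standard, but they do not touch this core issue. So what you have written is a fair survey of the landscape around an open problem, not a proof attempt with a fixable gap; the paper does not claim otherwise.
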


There are two motivations behind \cref{defn:local_secondary_invariant}, and hence \cref{conj:weak_hirachi}.

Our first motivation is in analogy with the $Q^\prime$-curvature.  Let $\mP^\perp$ denote the space of smooth volume forms which annihilate $\mP$; i.e.\ given a closed pseudohermitian manifold $(M^{2n+1},T^{1,0},\theta)$, we set
\[ \mP^\perp := \left\{ \psi\,\theta\wedge d\theta^n \suchthat \text{$\int_{M} u\psi\,\theta\wedge d\theta^n = 0$ for all $u\in\mP$} \right\} . \]
Note that $\psi\,\theta\wedge d\theta^n\in\mP^\perp$ if and only if $\psi$ is $L^2$-orthogonal to $\mP$ with respect to $\theta$, so that this definition coincides with the definition of $\mP^\perp$ given in the introduction.  Since $\mP^\perp$ is CR invariant, \cref{defn:local_secondary_invariant} is equivalent to the requirement that $I^\theta\,\theta\wedge d\theta^n$ is independent of the choice of pseudo-Einstein contact form modulo $\mP^\perp$.  This is analogous to how one realizes the $Q^\prime$-curvature as having a linear transformation law when working modulo $\mP^\perp$; see \cref{eqn:Qprime-transformation}.

Our second motivation is speculation based on the compatibility of \cref{defn:local_secondary_invariant} with the heuristic construction of ``primed'' invariants by analytic continuation in the dimension (cf.\ \cite{CaseGover2013,CaseYang2012,Marugame2019}).  Suppose that $I$ is a family of local CR invariants of weight $-n-1$ defined on all CR manifolds of CR dimension $d\geq n$, and moreover suppose that $I^\theta=0$ for any pseudo-Einstein contact form in CR dimension $n$.  Suppose further that the formal limit
\begin{equation}
 \label{eqn:formal_limit}
 I^\prime = \lim_{d\to n} \frac{1}{d-n}I^\theta
\end{equation}
makes sense when restricted to pseudo-Einstein manifolds.  The fact that $I$ is CR invariant implies that
\[ \int_{M^{2d+1}} u\hI\,\htheta\wedge d\htheta^d = \int_{M^{2d+1}} uI\,\theta\wedge d\theta^d \]
for all closed CR manifolds $(M^{2d+1},T^{1,0})$, all contact forms on $(M,T^{1,0})$, and all (real) densities $u$ of weight $n-d$; i.e.\ all equivalence classes $u=[u,\theta]$ subject to the relation $[u,\theta]=[e^{(n-d)\Upsilon}u,e^\Upsilon\theta]$.  Dividing both sides by $d-n$, restricting to pseudo-Einstein contact forms, taking the limit $d\to n$, and restricting to CR pluriharmonic functions then implies that $I^\prime$ is a local secondary invariant.  The restriction to CR pluriharmonic functions is for symmetry reasons, as two pseudo-Einstein contact forms $\theta$ and $\htheta=e^\Upsilon\theta$ are necessarily such that $\Upsilon\in\mP$.

Unfortunately, none of our nontrivial $\mI_\Phi^\prime$-curvatures seem to be local secondary invariants in the sense of \cref{defn:local_secondary_invariant}.  This observaton arises from two heuristics.

First, the Case--Gover construction~\cite{CaseGover2013} of $\mI^\prime$ in CR dimension two arises from analytic continuation in the dimension after working modulo divergences.  Since working modulo divergences breaks CR invariance, we expect $\mI^\prime$ to only be a local secondary invariant modulo a divergence.  A similar interpretation to the higher-dimensional $\mI_\Phi^\prime$-curvatures was given by Marugame~\cite{Marugame2019}.

Second, the $\mI_\Phi^\prime$-curvatures can be realized via analytic continuation without working modulo divergences, but by starting with variational pseudohermitian scalar invariants:

Let $\Phi$ be an invariant polynomial of degree $n$ and let $(M^{2d+1},T^{1,0},\theta)$ be a pseudohermitian manifold of CR dimension $d$.  Define
\begin{align*}
 c_\Phi(S) & := \delta_{\alpha_1\dotsm\alpha_n}^{\beta_1\dotsm\beta_n}\Phi_{\mu_1\dotsm\mu_n}^{\nu_1\dotsm\nu_n} S_{\beta_1}{}^{\alpha_1}{}_{\nu_1}{}^{\mu_1} \dotsm S_{\beta_n}{}^{\alpha_n}{}_{\nu_n}{}^{\mu_n}, \\
 X_\alpha^\Phi & := i(\mS^\Phi)_\alpha{}^\beta{}_\mu{}^\nu V_\beta{}^\mu{}_\nu - \frac{1}{dn}\nabla_\alpha c_\Phi(S) , \\
 \mI_\Phi & := -\frac{2}{n}\mU_\alpha{}^\beta P_\beta{}^\alpha + (d-n) \biggl[ \frac{1}{dn(2n-d)}\left( \Delta_b c_\Phi(S) - 2nPc_\Phi(S)\right) \\
  & \qquad + (\mT^\Phi)_\alpha{}^\beta{}_{\mu_1}{}^{\nu_1}{}_{\mu_2}{}^{\nu_2}\left((n-1)V_\beta{}^{\mu_1}{}_{\nu_1} V^\alpha{}_{\nu_2}{}^{\mu_2} - S_\beta{}^\alpha{}_{\nu_1}{}^{\mu_1} U_{\nu_2}{}^{\mu_2}\right) \biggr] ,
\end{align*}
where
\begin{align*}
 (\mS^\Phi)_\alpha{}^\beta{}_\mu{}^\nu & := \delta_{\alpha\alpha_2\dotsm\alpha_n}^{\beta\beta_2\dotsm\beta_n}\Phi_{\mu\mu_2\dotsm\mu_n}^{\nu\nu_2\dotsm\nu_n} S_{\beta_2}{}^{\alpha_2}{}_{\nu_2}{}^{\mu_2}\dotsm S_{\beta_n}{}^{\alpha_n}{}_{\nu_n}{}^{\mu_n}, \\
 (\mT^\Phi)_\alpha{}^\beta{}_{\mu_1}{}^{\nu_1}{}_{\mu_2}{}^{\nu_2} & := \delta_{\alpha\alpha_3\dotsm\alpha_n}^{\beta\beta_3\dotsm\beta_n}\Phi_{\mu_1\dotsm\mu_n}^{\nu_1\dotsm\nu_n} S_{\beta_3}{}^{\alpha_3}{}_{\nu_3}{}^{\mu_3} \dotsm S_{\beta_n}{}^{\alpha_n}{}_{\nu_n}{}^{\mu_n}, \\
 \mU_\alpha{}^\beta & := \delta_{\alpha\alpha_1\dotsm\alpha_n}^{\beta\beta_1\dotsm\beta_n}\Phi_{\mu_1\dotsm\mu_n}^{\nu_1\dotsm\nu_n} S_{\beta_1}{}^{\alpha_1}{}_{\nu_1}{}^{\mu_1} \dotsm S_{\beta_n}{}^{\alpha_n}{}_{\nu_n}{}^{\mu_n} - \frac{d-n}{d}\delta_\alpha^\beta c_\Phi(S) .
\end{align*}
Note that when $d=n$, each of $c_\Phi(S)$, $X_\alpha^\Phi$, $(\mS^\Phi)_\alpha{}^\beta{}_\mu{}^\nu$, and $(\mT^\Phi)_\alpha{}^\beta{}_{\mu_1}{}^{\nu_1}{}_{\mu_2}{}^{\nu_2}$ recovers our original definitions given in the introduction.  Moreover, note that $\mU_\alpha{}^\beta$ is trace-free for all $d$ and that $\mU_\alpha{}^\beta=0$ when $d=n$.  These observations imply that $\mU_\alpha{}^\beta P_\beta{}^\alpha=0$ on all pseudo-Einstein manifolds.  Indeed, by restricting $\mI_\Phi$ to pseudo-Einstein manifolds and formally taking a dimensional limit, we have that
\begin{equation}
 \label{eqn:I-prime-as-limit}
 \lim_{d\to n} \frac{1}{d-n}\mI_\Phi = \mI_\Phi^\prime ;
\end{equation}
that is, the $\mI_\Phi^\prime$-curvature can be interpreted as the secondary invariant associated to $\mI_\Phi$ via analytic continuation in the dimension, analogous to the heuristic interpretation of the $Q^\prime$-curvature~\cite{CaseYang2012,Hirachi2013}.

One nice property of $\mI_\Phi$ is that it is a variational pseudo-Einstein invariant.  More precisely, using the identity
\begin{equation}
 \label{eqn:mC-divergence}
 \nabla_\beta \mU_\alpha{}^\beta = n(d-n)X_\alpha^\Phi,
\end{equation}
it is straightforward to compute that
\begin{equation}
 \label{eqn:mC-linearization}
 e^{(n+1)\Upsilon} \hmI_\Phi = \mI_\Phi - \frac{2}{n}\Real\nabla^\gamma\left(\mU_\gamma{}^\beta\Upsilon_\beta\right)
\end{equation}
for all pseudohermitian manifolds $(M^{2d+1},T^{1,0},\theta)$ and all $\htheta:=e^\Upsilon\theta$, $\Upsilon\in C^\infty(M)$.  It follows that
\begin{equation}
 \label{eqn:mI-variational}
 \left.\frac{d}{dt}\right|_{t=0} \int_M (\mI_\Phi)^{\theta_t}\,\theta_t\wedge d\theta_t^d = (d-n)\int_M \mI_\Phi \Upsilon \, \theta \wedge d\theta^d
\end{equation}
for all one-parameter families $\theta_t=e^{t\Upsilon}\theta$ of contact forms on $(M^{2d+1},T^{1,0})$.

Together with the realization of $\mI_\Phi^\prime$ as the limit of \cref{eqn:I-prime-as-limit}, the previous paragraph suggests that the $\mI_\Phi^\prime$-curvature should be variational in the space of pseudo-Einstein contact forms.  More precisely, we expect that there is a trace-free Hermitian tensor $\omega_{\alpha\bar\beta}$ such that $e^{(n-1)\Upsilon}\homega_{\alpha\bar\beta}=\omega_{\alpha\bar\beta}$ and
\begin{equation}
 \label{eqn:Iprime-transformation-goal}
 e^{(n+1)\Upsilon}\hmI_\Phi^\prime = \mI_\Phi^\prime + 2\Real\nabla^\gamma\left(\omega_\gamma{}^\beta\Upsilon_\beta\right)
\end{equation}
for all pseudo-Einstein contact forms $\theta$ and $\htheta=e^\Upsilon\theta$ on $(M^{2n+1},T^{1,0})$.  By \cref{eqn:I-prime-as-limit}, one may formally think of $\omega_{\alpha\bar\beta}$ as the limit of $\frac{1}{d-n}\mU_{\alpha\bar\beta}$ as $d\to n$.  By \cref{thm:mI-transformation}, the transformation formula of \cref{eqn:Iprime-transformation-goal} is equivalent to asking that the real $(2n-1)$-form
\[ \omega := i\omega_{\alpha\bar\beta}\theta \wedge \theta^\alpha \wedge \theta^{\bar\beta} \wedge d\theta^{n-2} \]
is such that
\[ -(n-1)\db \omega = X_\alpha\theta\wedge\theta^\alpha\wedge d\theta^{n-1} , \]
where $\db\omega:=i\nabla_\gamma\omega_{\alpha\bar\beta}\theta\wedge\theta^\gamma\wedge\theta^\alpha\wedge\theta^{\bar\beta}\wedge d\theta^{n-2}$.  This conclusion has an interpretation in terms of the bigraded Rumin complex~\cite{Garfield2001,GarfieldLee1998} which is stronger than the fact, established in the proof of \cref{thm:secondary_invariant}, that $[\xi^\Phi]=0$ in $H^{2n}(M;\bR)$.

Suppose that the real $(2n-1)$-form $\omega$ exists.  If there is a natural $2n$-form
\[ \zeta := \zeta_\alpha\theta\wedge\theta^\alpha\wedge d\theta^{n-1} \]
such that
\[ \hzeta = \zeta + \dbbar\Upsilon\wedge\omega \]
for all pseudo-Einstein contact forms $\theta$ and $\htheta=e^\Upsilon\theta$, then $\mI_\Phi^\prime-2(n-1)\Real\nabla^\gamma\zeta_\gamma$ is a local secondary invariant in the sense of \cref{conj:strong_hirachi}.  We do not expect that $\omega$ and $\zeta$, if they exist, are natural.  Instead, we hope that they can be canonically defined in terms of a pseudo-Einstein contact form.

The previous two paragraphs are pure speculation, intended to suggest a path towards better understanding the $\mI_\Phi^\prime$-curvatures and \cref{conj:weak_hirachi}.  We conclude by proving that the $\mI_\Phi^\prime$-curvatures are not local secondary invariants, and thus providing further justification for the speculations above.

\begin{proposition}
 Let $(M,T^{1,0},\theta)$ and $\Phi$ be as in \cref{thm:hirachi_ellipsoid}.  Then $\mI_\Phi^\prime$ is not a local secondary invariant in the sense of \cref{defn:local_secondary_invariant}.
\end{proposition}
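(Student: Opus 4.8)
The plan is to characterize the failure of \cref{defn:local_secondary_invariant} as the nonvanishing of a canonical symmetric pairing on $\mP$, and then to detect that nonvanishing through the moments of the measure determined by $\Real\nabla^\alpha X_\alpha^\Phi$. First I would reduce to a statement purely about $\Real\nabla^\alpha X_\alpha^\Phi$. Fix a closed CR manifold admitting a pseudo-Einstein contact form $\theta$, and let $\htheta=e^\Upsilon\theta$ be another pseudo-Einstein contact form, so that $\Upsilon\in\mP$ and $\htheta\wedge d\htheta^n=e^{(n+1)\Upsilon}\theta\wedge d\theta^n$. Combining this with \cref{thm:mI-transformation} gives, for any $u\in\mP$,
\[ \int_M u\hmI_\Phi^\prime\,\htheta\wedge d\htheta^n-\int_M u\mI_\Phi^\prime\,\theta\wedge d\theta^n=2\int_M u\,\Real\bigl(X_\alpha^\Phi\Upsilon^\alpha\bigr)\,\theta\wedge d\theta^n . \]
Thus $\mI_\Phi^\prime$ is a local secondary invariant if and only if the pairing $J(u,\Upsilon):=\int_M u\,\Real(X_\alpha^\Phi\Upsilon^\alpha)\,\theta\wedge d\theta^n$ vanishes for all $u,\Upsilon\in\mP$ and all pseudo-Einstein $\theta$.

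Next I would compute the symmetric part of $J$. Integrating $\nabla^\alpha\bigl(u\Upsilon X_\alpha^\Phi\bigr)$ over the closed manifold $M$ (the divergence theorem), using $\nabla^\alpha(u\Upsilon)=u\Upsilon^\alpha+\Upsilon u^\alpha$, and taking real parts yields
\[ J(u,\Upsilon)+J(\Upsilon,u)=-\int_M u\Upsilon\,\Real\nabla^\alpha X_\alpha^\Phi\,\theta\wedge d\theta^n . \]
By \cref{thm:X-invariant}, $\Real\nabla^\alpha X_\alpha^\Phi$ is a CR invariant of weight $-n-1$, so the measure $\Real\nabla^\alpha X_\alpha^\Phi\,\theta\wedge d\theta^n$ is independent of the choice of contact form. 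In particular it may be computed using $\theta_t$ as in \cref{thm:hirachi_ellipsoid}, and each $M_t$ admits a pseudo-Einstein contact form since it is the boundary of a domain in $\bC^{n+1}$. It therefore suffices to exhibit $u,\Upsilon\in\mP$ with $\int_{M_t}u\Upsilon\,\Real\nabla^\alpha X_\alpha^\Phi\,\theta_t\wedge d\theta_t^n\neq0$ for some small $t$.

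The key observation is that $\mathrm{span}\{u\Upsilon:u,\Upsilon\in\mP\}$ contains every power of $|w|^2$. Since $M_t$ is embedded in $\bC^{n+1}$, the restriction of the holomorphic coordinate $w$ is a CR function, so $\Real(w^k),\Imaginary(w^k)\in\mP$ for every $k$, and $\Real(w^k)^2+\Imaginary(w^k)^2=|w|^{2k}$. I would then argue by contradiction: if $\mI_\Phi^\prime$ were a local secondary invariant, then $J\equiv0$, hence $\int_{M_t}|w|^{2k}\,\Real\nabla^\alpha X_\alpha^\Phi\,\theta_t\wedge d\theta_t^n=0$ for every $k\ge0$ and every small $t$. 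Recall from the proof of \cref{thm:hirachi_ellipsoid} that $\Real\nabla^\alpha X_\alpha^\Phi$ vanishes to order $n$ at $t=0$, with $\tfrac1{(n!)^2}\partial_t^n|_{0}$ equal to a nonzero multiple of $c^{2n-1}\bigl(3n(n+1)c+3n-1\bigr)$, where $c=-\tfrac1{n+1}(1-|w|^2)$ by \cref{eqn:c-to-w}. Applying $\partial_t^n|_{t=0}$ to the vanishing integral and pulling back by $F_t$, only the term in which all $n$ derivatives fall on the $O(t^n)$ factor survives, so
\[ \int_{M_0}|w|^{2k}\,g\,\theta\wedge d\theta^n=0\qquad(k\ge0), \]
where $g$ is this nonzero polynomial in $s:=|w|^2$ and $\theta\wedge d\theta^n$ is the round-sphere volume form.

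Finally I would push forward the round-sphere measure under $s=|w|^2$ to a measure $d\mu$ on $[0,1]$ with density positive on $(0,1)$; the displayed identities assert that the nonzero polynomial $g$ is $L^2(d\mu)$-orthogonal to every monomial $s^k$, hence to all polynomials, hence (by the Weierstrass theorem and positivity of $d\mu$) $\int_0^1 g^2\,d\mu=0$, forcing $g\equiv0$ and contradicting $p(\varsigma)\neq0$. The main obstacle is the bookkeeping in the previous paragraph — justifying that only the top-order term in $t$ survives the differentiation of $\int_{M_t}|w|^{2k}\,\Real\nabla^\alpha X_\alpha^\Phi\,\theta_t\wedge d\theta_t^n$, and confirming that the pushforward measure has full support on $[0,1]$ — whereas the analytic heart, that a nonzero polynomial cannot annihilate all its moments, is immediate.
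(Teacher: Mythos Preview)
Your proof is correct. Both your argument and the paper's rely on the deformation of \cref{thm:hirachi_ellipsoid}, on CR pluriharmonic functions built from the coordinate $w$, and on the vanishing of the first $n-1$ derivatives in $t$ so that only the top-order term survives upon differentiating an integral $n$ times. The executions differ, however. You first integrate by parts to symmetrize $J$ and reduce to moments of $\Real\nabla^\alpha X_\alpha^\Phi$ against the family $\lvert w\rvert^{2k}$, then invoke a density (Weierstrass) argument to force the explicit polynomial $g(s)$ to vanish, contradicting $p(\varsigma)\neq 0$. The paper is more direct: it simply takes $u=2\Real w$ and computes the single quantity $\partial_t^n\bigl|_{t=0}\int u\,\Real X_\alpha^\Phi u^\alpha$, using \cref{eqn:time-derivative-of-X} rather than the divergence formula; the result is a nonzero multiple of $\int_{S^{2n+1}} c^{2n}u^2\,\theta\wedge d\theta^n$, which is manifestly positive. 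Your route trades that one-line positivity check for the symmetrization step and a moment argument; it buys a slightly more structural picture (the obstruction is encoded in the CR-invariant measure $\Real\nabla^\alpha X_\alpha^\Phi\,\theta\wedge d\theta^n$), at the cost of the extra bookkeeping you flag at the end.
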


\begin{proof}
 Note that, since $X_\alpha^\Phi$ is a CR invariant, it suffices to find a CR manifold $(M^{2n+1},T^{1,0})$ which admits a pseudo-Einstein contact form and also admits functions $u,v\in\mP$ such that $\int u\Real X_\alpha^\Phi v^\alpha\not=0$.  We accomplish this by computing
 \[ D := \left.\frac{d^n}{dt^n}\right|_{t=0} \int_{S^{2n+1}} u\Real (X^t)_\alpha^\Phi u^\alpha \, \theta_t\wedge d\theta_t, \]
 where $(S^{2n+1},T^{1,0},\theta_t)$ is as in \cref{thm:hirachi_ellipsoid} and $u=2\Real w$.  Note that $u$ is a CR pluriharmonic function on $S^{2n + 1}$.  A straightforward computation using \cref{eqn:time-derivative-of-X} yields
 \[ \frac{1}{(n!)^2}D = -\frac{3(n+1)}{2n}\left(\frac{n+1}{n+2}\right)^n p(\varsigma) \int_{S^{2n+1}} c^{2n}u^2\,\theta\wedge d\theta^n \not= 0 . \]
 Hence $\mI_\Phi^\prime$ is not a local secondary invariant for any nonzero $t$ sufficiently close to zero.
\end{proof}

 \subsection*{Acknowledgments}
 The authors would like to thank Kengo Hirachi and Taiji Marugame for their valuable comments.  YT would also like to thank Penn State University for its kind hospitality when part of this research was carried out.

\bibliographystyle{abbrv}
\bibliography{bib}
\end{document}